\documentclass[11pt]{article}

\usepackage[letterpaper, margin=1in]{geometry}

\usepackage[T1]{fontenc}
\usepackage{lmodern}

\usepackage[round,authoryear]{natbib}
\usepackage{amsmath,amsfonts,bm}
\usepackage{empheq,amsthm}

\DeclareMathOperator*{\EE}{\mathbb{E}}
\newcommand{\RR}{\mathbb{R}}
\newcommand{\dd}{\mathrm{d}}

\newtheorem{theorem}{Theorem}
\newtheorem{lemma}[theorem]{Lemma}

\newtheorem{remark}[theorem]{Remark}

\def\eqref#1{equation~(\ref{#1})}

\def\1{\bm{1}}

\def\eps{{\epsilon}}

\DeclareMathAlphabet{\mathsfit}{\encodingdefault}{\sfdefault}{m}{sl}
\SetMathAlphabet{\mathsfit}{bold}{\encodingdefault}{\sfdefault}{bx}{n}

\newcommand{\E}{\mathbb{E}}

\newcommand{\R}{\mathbb{R}}

\DeclareMathOperator*{\argmin}{arg\,min}

\DeclareMathOperator{\Tr}{Tr}

\usepackage{hyperref}
\usepackage{url}
\usepackage{cleveref}
\usepackage{booktabs}

\makeatletter
\def\rev{\@ifnextchar_{\rev@num}{\rev@blue}}
\long\def\rev@blue#1{{#1}}
\long\def\rev@num_num#1{{#1}}
\makeatother

\crefname{equation}{\!}{\!}
\Crefname{equation}{\!}{\!}
\creflabelformat{equation}{(#2#1#3)}

\newenvironment{talign*}
 {\csname align*\endcsname}
 {\endalign}
\newenvironment{talign}
 {\csname align\endcsname}
 {\endalign}

\newcommand{\e}{\varepsilon}
\newcommand\blfootnote[1]{%
\begingroup\renewcommand\thefootnote{}\footnote{#1}%
\addtocounter{footnote}{-1}\endgroup}

\title{Adjoint Matching through the Lens of the Stochastic Maximum Principle in Optimal Control\vspace{0.5em}}

\author{
  \blfootnote{Authors are listed in alphabetical order.}
  Carles Domingo-Enrich \\
  Microsoft Research New England \\
  \texttt{carlesd@microsoft.com}
  \and
  Jiequn Han \\
  Flatiron Institute \\
  \texttt{jhan@flatironinstitute.org}
  \vspace{0.5em}
}

\begin{document}

\maketitle
\vspace{-1em}

\begin{abstract}
Reward fine-tuning of diffusion and flow models and sampling from tilted or Boltzmann distributions can both be formulated as stochastic optimal control (SOC) problems, where learning an optimal generative dynamics corresponds to optimizing a control under SDE constraints. In this work, we revisit and generalize \emph{Adjoint Matching}, a recently proposed SOC-based method for learning optimal controls, and place it on a rigorous footing by deriving it from the \emph{Stochastic Maximum Principle} (SMP).
We formulate a general Hamiltonian adjoint matching objective for SOC problems with control-dependent drift and diffusion and convex running costs, and show that its expected value has the same first variation as the original SOC objective. As a consequence, critical points satisfy the Hamilton--Jacobi--Bellman (HJB) stationarity conditions. In the important practical case of state- and control-independent diffusion, we recover the \emph{lean} adjoint matching loss previously introduced, which avoids second-order terms and whose critical points coincide with the optimal control under mild uniqueness assumptions. \rev_num{Numerical experiments confirm that the extra terms it discards become necessary once the diffusion is state-dependent.}
Finally, we show that adjoint matching can be precisely interpreted as a continuous-time method of successive approximations induced by the SMP, yielding a practical and implementable alternative to classical SMP-based algorithms, which are obstructed by intractable martingale terms in the stochastic setting.
These results are also of independent interest to the stochastic control community, providing new implementable objectives and a viable pathway for SMP-based iterations in stochastic problems.
\end{abstract}

\section{Introduction}
Flow Matching \citep{lipman2023flow,albergo2023building,liu2023flow} and denoising diffusion models
\citep{song2019generative,ho2020denoising,song2021scorebased,kingma2021ondensity} are the state-of-the-art for many generative modeling applications, including text-to-image \citep{rombach2022high,esser2024scaling}, text-to-video \citep{singer2022make}, text-to-audio \citep{le2024voicebox,vyas2023audiobox}, and scientific tasks like protein structure prediction~\citep{abramson2024accurate}. The generation process involves solving ordinary or stochastic differential equations (ODEs/SDEs) with vector fields pretrained on large-scale datasets. Oftentimes, the pretrained base model does not achieve the desired sample quality. If we have access to a reward model $r(x)$ that assesses the sample quality, a common remedy is to post-train, or \textit{fine-tune}, the base model in order to sample from the \emph{tilted distribution} with density:
\begin{talign} \label{eq:reward_finetuning}
    p^*(x) \propto p_{\mathrm{base}}(x) \exp(r(x)),
\end{talign}
where $p_{\mathrm{base}}$ denotes the density of the distribution induced by the pretrained base model.

Flow matching and diffusion models are typically pre-trained using samples from the distribution that we want to model (images, audio, video, etc.). A related problem that is central in the computational sciences is sampling from the Boltzmann distribution, defined by an unnormalized energy function $E$, i.e.
\begin{talign} \label{eq:sampling}
    p^*(x) \propto \exp(-\beta E(x)),
\end{talign}
where $\beta>0$ denotes the inverse temperature. This fundamental task has wide applications in molecular modeling and Bayesian inference. Markov Chain Monte Carlo (MCMC, \cite{neal2001annealed,neal2011mcmc}) and Sequential Monte Carlo (SMC, \cite{del2006sequential}) are classical approaches that have been studied extensively, but often suffer from slow mixing and poor scalability to high-dimensional settings. \cite{albergo2019flow,arbel2021annealed,gabrie2022adaptive} augment Monte Carlo methods using normalizing flows \citep{rezende2015variational,chen2018neural}, and building on these ideas, a more recent line of work \cite{tzen2019theoretical,zhang2022path,berner2023optimal,bruna2024posterior,richter2024improved,havens2025adjoint,ren2025driftlite} consider generation algorithms based on continuous normalizing flows, i.e. samples are generated by solving ODEs or SDEs with learned vector fields. 

As we review in \Cref{sec:background}, both reward fine-tuning and Boltzmann sampling can be cast as stochastic optimal control (SOC) problems. In the high-dimensional settings typical of these applications, classical grid-based methods for SOC are intractable, and the control is usually parameterized by a neural network and optimized via stochastic gradient methods. \cite{domingoenrich2025adjoint} proposed \emph{adjoint matching}, an SOC method that frames learning the optimal control as an $L^2$ regression problem in which the target is computed by solving an adjoint ODE backward in time. This approach was applied successfully to reward fine-tuning of a text-to-image flow matching model, yielding empirical improvements over existing methods at the time.

Despite its empirical success, adjoint matching was originally proposed heuristically within the control-affine quadratic-cost setting, and its optimality was verified a~posteriori rather than derived from first principles (see \Cref{subsec:original_am}). This heuristic construction does not reveal the underlying control-theoretic structure that explains \emph{why} the simplification preserves optimality, nor does it extend to more general SOC formulations.

\rev_num{This limitation is not merely formal. Recent generative modeling work has begun to use non-Gaussian forward processes with state-dependent diffusion, including simplex diffusions based on Cox--Ingersoll--Ross or Jacobi-type processes~\citep{richemond2022categorical,avdeyev2023dirichlet} and multiplicative models based on geometric Brownian motion~\citep{shetty2025dale,kim2025diffusion}. More recently, \citet{tang2026tweedie} developed unified denoising objectives for several such processes. Reward fine-tuning of models with these dynamics therefore calls for an adjoint-matching theory that applies beyond the time-dependent-diffusion setting of the original formulation, a need we confirm numerically in \Cref{sec:numerics}.}

In this paper, we revisit and generalize adjoint matching, and establish its connection to stochastic optimal control through the \emph{stochastic maximum principle} (SMP). Beyond the generative modeling context, these results are also of independent interest to the stochastic control community, as they yield new implementable algorithms for SOC and a practical resolution to a long-standing obstacle in SMP-based methods. \rev{More broadly, methods for stochastic optimal control are increasingly central to machine learning: in addition to reward fine-tuning of diffusion and flow models, adjoint-matching-based algorithms have recently been used for sampling from unnormalized densities~\citep{havens2025adjoint,liu2025adjoint} and for reinforcement learning in robotics~\citep{li2026qlearning}. Our results connect this growing family of machine-learning algorithms with the classical stochastic control literature, and are intended to facilitate exchange between the two communities.} Our main contributions are as follows:

\begin{itemize}
    \item \textbf{A general Hamiltonian BAM formulation for SOC (\Cref{thm:general_hamiltonian_basic_adjoint_matching}).}
    We formulate a \emph{general Hamiltonian basic adjoint matching} (BAM) objective for SOC problems with general control-dependent drift and diffusion and convex running costs. This formulation relies on unbiased estimators of the cost-to-go gradient and Hessian via first- and second-order adjoint equations, and substantially extends the setting of \cite{domingoenrich2025adjoint}, which arises as a special case. We show that the expected BAM objective has the same first variation as the original SOC objective, and that its critical points satisfy \rev{the stationarity conditions associated with the stochastic maximum principle (SMP) and Hamilton--Jacobi--Bellman (HJB) formulations.}

    \item \textbf{A lean adjoint matching loss for time-dependent diffusion (\Cref{thm:general_hamiltonian_lean_adjoint_matching}).}
    In the practically important case where the diffusion depends only on time, the BAM framework reduces to a \emph{lean adjoint matching} (AM) loss that avoids second-order terms and recovers the original adjoint matching objective of \cite{domingoenrich2025adjoint} in the control-affine quadratic-cost setting as a special case. Under standard uniqueness assumptions, the critical points of this lean AM loss recover the optimal control.

    \item \textbf{A precise SMP interpretation of adjoint matching (\Cref{thm:am_msa}).}
    The preceding results characterize the critical points of the lean AM loss but do not address whether gradient-based optimization of that loss implements a principled improvement procedure.
    We further show that the gradient flow induced by the expected lean AM loss coincides with a continuous-time method of successive approximations (MSA) derived from the stochastic maximum principle (SMP) for optimal stochastic control, while bypassing the intractable martingale term that obstructs classical SMP-based schemes.
    \rev{For parameterized controls such as neural networks, this gives gradient-based training of the lean AM loss a concrete control-theoretic interpretation, rather than treating it as a heuristic regression objective.}

    \item \textbf{\rev_num{Numerical validation in the state-dependent diffusion regime (\Cref{sec:numerics}).}}
    \rev_num{On two controlled geometric-Brownian-motion target-steering problems, a synthetic Gaussian-mixture target across increasing dimension and an MNIST VAE latent bridge, we show that retaining the state-dependent diffusion terms of the first-order adjoint, which the lean AM discards, measurably improves the recovered control and terminal distribution.}
\end{itemize}

\paragraph{Roadmap.}
\Cref{sec:background} reviews the SOC formulation, the original adjoint matching objective, \rev{and
  the stochastic maximum principle (SMP).} \Cref{sec:main_results} introduces the general adjoint matching objective from a Hamiltonian perspective (\Cref{thm:general_hamiltonian_basic_adjoint_matching}), then specializes to purely time-dependent diffusion to obtain a lean first-order objective whose critical points recover the optimal control (\Cref{thm:general_hamiltonian_lean_adjoint_matching}). \Cref{sec:connecting} shows how the lean adjoint matching of this paper is precisely the continuous-time MSA induced by the SMP (\Cref{thm:am_msa}). \rev_num{\Cref{sec:numerics} validates the theory numerically on two controlled geometric-Brownian-motion target-steering problems with state-dependent diffusion.} Proofs and technical extensions are deferred to the appendix.

\section{Background}
\label{sec:background}

We begin by reviewing the stochastic optimal control (SOC) formulation that underlies both reward fine-tuning and Boltzmann sampling, then recall the adjoint matching objective proposed by \citet{domingoenrich2025adjoint} \rev{and the
  maximum-principle background used later in the paper.}

\subsection{Stochastic Optimal Control Formulation}
We start with the following generic SOC problem. Let $f:\mathbb{R}^d\times\mathbb{R}^k\times[0,T]\to\mathbb{R}\cup\{+\infty\}$ be proper, lower semicontinuous, and convex in its second argument, let $b:\mathbb{R}^d\times\mathbb{R}^k\times[0,T]\to\mathbb{R}^d$ and $\sigma:\mathbb{R}^d\times\mathbb{R}^k\times[0,T]\to\mathbb{R}^{d\times m}$ be $C^1$ in $x$, and $B_t$ denote $m$-dimensional Brownian motion. We aim to solve:
\begin{talign} \label{eq:general_control_objective}
    &\min_{u\in\mathcal{U}}\;
    \EE\Big[\int_0^T f(X_t^u,u(X_t^u,t),t)\,\dd t + g(X_T^u)\Big],\\
    &\text{s.t.}\quad
    \dd X_t^u=b(X_t^u,u(X_t^u,t),t)\,\dd t
    +\sigma(X_t^u,u(X_t^u,t),t)\,\dd B_t,\qquad X_0^u\sim P_0.
    \label{eq:general_SDE}
\end{talign}
Here $P_0$ is the initial distribution, $f$ is the running cost, $g$ is the terminal cost, $b$ is the drift, $\sigma$ is the diffusion coefficient, and $\mathcal{U}$ is the space of Markov control functions $u : \mathbb{R}^d \times [0,T] \to \mathbb{R}^k$.

\subsection{Equivalence to Reward Fine-Tuning and Sampling}
\label{subsec:soc_equivalence}
Reward fine-tuning of diffusion and flow models (problem \ref{eq:reward_finetuning}) and sampling with continuous normalizing flows (problem \ref{eq:sampling}) are closely connected: if we let $p_{\mathrm{base}}$ be a Gaussian, given an energy $E$ we can construct a reward function $r(x) = - \beta E(x) - \log p_{\mathrm{base}}(x)$ such that problem \ref{eq:sampling} reduces to problem \ref{eq:reward_finetuning}. Moreover, both problems are equivalent to SOC problems. Generically, the control-affine quadratic-cost SOC problem reads
\begin{talign} \label{eq:SOC_problem}
    &\min_{u\in \mathcal{U}} \EE \big[ \int_0^T \frac12 \|u(X_t^u, t)\|^2 + f(X_t^u, t)\,\dd t + g(X_T^u) \big], \\
    &\text{s.t.~~} \dd X_t^u = (b(X_t^u, t) + \sigma(X_t^u,t)u(X_t^u, t))\, \dd t + \sigma(X_t^u,t)\dd B_t, \quad X_0^u \sim P_0.
\end{talign}
Here the symbols are as in~\eqref{eq:general_control_objective}, with $b$ now denoting the base (uncontrolled) drift and $f$ the state-dependent running cost (as opposed to the joint running cost $f(x,u,t)$ above). According to the path integral formulation of SOC (\cite{kappen2005path}, see \cite[Eq.~8,~App.~B]{domingoenrich2024stochastic} for a self-contained proof), the Radon-Nikodym derivative between the probability measures of the optimally controlled process $\mathbb{P}^{\star}$ and the uncontrolled process $\mathbb{P}^{\mathrm{base}}$ $(u\equiv 0)$ is equal to
\begin{talign} \label{eq:PI_control}
    \frac{\mathrm{d}\mathbb{P}^{\star}}{\mathrm{d}\mathbb{P}^{\mathrm{base}}}(X) = \exp\big( - \int_0^T f(X_t,t) \, \dd t - g(X_T) + V(X_0,0) \big),
\end{talign}
with $X$ denoting the uncontrolled process and $V$ denoting the \emph{value function} or \emph{cost-to-go}, defined by 
$$V(x, t) = - \log \E [ \exp( - \int_t^T f(X_s, s) \mathrm{d}s - g(X_T) ) \;|\; X_t = x ].$$ \citet[Sec.~4.2, 4.3]{domingoenrich2025adjoint} show that when $f \equiv 0$ and the base process satisfies the \textit{memorylessness} condition\footnote{The uncontrolled process $X$ from $\mathrm{d}X_t = b(X_t, t)\mathrm{d}t + \sigma(X_t, t)\mathrm{d}B_t$ is memoryless when the random variables $X_0$ and $X_T$ are independent. See \cite[Sec.~4.3]{domingoenrich2025adjoint} for sufficient conditions arising from generative modeling that guarantee this.}, \eqref{eq:PI_control} implies that
\begin{talign} \label{eq:marginals_PI_control}
    p^*(x) \propto p^{\mathrm{base}}(x) \exp(-g(x)),
\end{talign}
where $p^*$ and $p^{\mathrm{base}}$ are the marginals of $\mathbb{P}^{\star}$ and $\mathbb{P}^{\mathrm{base}}$ at terminal time $t=T$, respectively. A comparison between \eqref{eq:marginals_PI_control} and equations (\ref{eq:reward_finetuning})(\ref{eq:sampling}) shows that we can recast reward fine-tuning and sampling into the SOC problem \eqref{eq:SOC_problem} by setting (i) $f \equiv 0$, (ii) the distribution $P_0$ and diffusion coefficient $\sigma$ such that $X$ is memoryless, and (iii) the terminal cost as $g(x) = -r(x)$, $g(x) = \beta E(x) + \log p_{\mathrm{base}}(x)$, respectively.

The SOC viewpoint is one of several frameworks for reward fine-tuning of diffusion models. An alternative line of work treats the generation process as a discrete-time Markov decision process and applies reinforcement learning techniques such as policy gradient algorithms~\citep{fan2023dpok,black2024training}. Other methods directly backpropagate the reward gradient through the sampling chain~\citep{clark2024directly,prabhudesai2023aligning}. By contrast, adjoint matching and the methods studied in this paper operate in the continuous-time SOC framework~\eqref{eq:SOC_problem}~\citep{uehara2024fine,tang2024fine,han2025stochastic,gao2026terminally}, leveraging adjoint equations to compute the optimal control without backpropagating through the full sampling procedure.

\subsection{The Original Adjoint Matching Objective}
\label{subsec:original_am}

\citet{domingoenrich2025adjoint} proposed \emph{adjoint matching} for the control-affine quadratic-cost SOC problem~\eqref{eq:SOC_problem} with purely time-dependent diffusion $\sigma(x,u,t) = \sigma(t)$. In this setting, they define the \emph{lean adjoint} $\tilde{a}(t;X) \in \mathbb{R}^d$ as the solution of the backward ODE
\begin{talign}
    \label{eq:lean_adjoint_background}
    \begin{split}
        \frac{\mathrm{d} \tilde a}{\mathrm{d}t}(t;X)
        &=
        -\nabla_x b(X_t,t)^{\top}\, \tilde a(t;X)
        -\nabla_x f(X_t,t),
    \end{split}
    \\
    \tilde a(T;X)&=\nabla_x g(X_T),
\end{talign}
and the adjoint matching loss is
\begin{talign}
    \label{eq:am_loss_background}
    \mathcal L_{\mathrm{AM}}(u;X^{\bar u})
    &=\int_0^T
    \tfrac12\|u(X_t^{\bar u},t)+\sigma(t)^{\top}\tilde a(t;X^{\bar u})\|^2\,\dd t,
    \qquad
    \bar u=\mathrm{stopgrad}(u).
\end{talign}
In words, $\mathcal{L}_{\mathrm{AM}}$ is an $L^2$ regression loss that trains the control $u$ to match the target $-\sigma(t)^\top \tilde{a}(t; X^{\bar u})$ along trajectories simulated under the current (stop-gradiented) control $\bar{u}$.

This objective was motivated by adjoint sensitivity analysis but derived heuristically: the lean adjoint $\tilde{a}$ is a simplified version of the full adjoint process, obtained by omitting terms whose expectation vanishes at the optimal control $u^*$. \citet{domingoenrich2025adjoint} proved that $u^*$ is the unique critical point of the resulting loss, but the derivation relies on verifying this property a posteriori rather than constructing the objective from first principles. In the following sections, we provide such a construction via the Hamiltonian formulation of SOC and generalize the objective to settings with arbitrary control-dependent drift and running costs (\Cref{sec:main_results}).
We further show that the resulting lean adjoint matching admits an interpretation as a continuous-time MSA induced by the SMP (\Cref{sec:connecting}).

\subsection{Maximum Principles}
\label{sec:smp}

\rev{We next recall the control-theoretic background needed to interpret adjoint matching as a method of successive approximations (MSA): the deterministic Pontryagin maximum principle, the resulting successive-approximation algorithm, and the stochastic maximum principle. The key point for what follows is that the stochastic adjoint equation contains an unknown martingale term, which makes the direct stochastic analogue of this algorithm difficult.}

\paragraph{Pontryagin maximum principle and method of successive approximations.}
The Pontryagin maximum principle (PMP) provides necessary optimality conditions for deterministic control problems of the form
\begin{talign} \label{eq:general_deterministic_control_objective}
    &\min_{u\in\mathcal{U}}\;
    \EE\Big[\int_0^T f(X_t^u,u(X_t^u,t),t)\,\dd t + g(X_T^u)\Big],\\
    &\text{s.t.}\quad
    \dd X_t^u=b(X_t^u,u(X_t^u,t),t)\,\dd t,
    \qquad X_0^u\sim P_0.
\end{talign}
\rev{Introducing the \emph{simplified Hamiltonian}}
\begin{talign} \label{eq:simplified_hamiltonian_0}
    \tilde H(x,t;u,p) = f(x,u,t)+\langle b(x,u,t),p\rangle,
\end{talign}
\rev{the PMP takes the form}
\begin{empheq}[left=\empheqlbrace]{alignat=2}
    &\dd X_t^* = b(X_t^*, u^*_t, t)\,\dd t,
    \label{eq:pmp_x_xdep}\\
    &\dd p_t^* = - \nabla_x \tilde{H}\big(X_t^*,t;u^*_t,p_t^*\big)\,\dd t,
    \label{eq:pmp_adjoint_xdep}\\
    &X_0^*\sim P_0,\qquad p_T^* = \nabla_x g(X_T^*),\\
    &\tilde{H} \big(X_t^*,t;u^*_t,p_t^*\big)=\min_{u\in\mathbb{R}^k} \tilde{H} \big(X_t^*,t;u,p_t^*\big).
    \label{eq:pmp_Hmax_xdep}
\end{empheq}
A crucial feature of the PMP is that all quantities are fully specified: the forward state ODE and the backward adjoint ODE have known coefficients, and the Hamiltonian minimization is a pointwise optimization. This gives rise to the method of successive approximations (MSA,~\cite{chernousko1982method}, \cite[Algorithm~1]{li2018maximum}), an iterative procedure repeating the following three steps:
\begin{enumerate}
    \item Solve forwardly $\dd X_t^k = b(X_t^k, u^k_t, t)\,\dd t, \quad X^k_0 \sim P_0$.
    \item Solve backwardly $\dd p_t^k = - \nabla_x \tilde{H}\big(X_t^k,t;u^k_t,p_t^k \big)\,\dd t, \quad p^k_T =  \nabla_x g(X^k_T)$.
    \item Solve the Hamiltonian optimization condition:
    \begin{talign}
     u^{k+1}_t \in \arg\min_{u\in\mathbb{R}^k} \{ \tilde{H} \big(X_t^k,t;u,p_t^k\big) \}.
    \end{talign}
\end{enumerate}
Each step of MSA is implementable, and the algorithm has well-studied convergence properties in the deterministic setting~\citep{li2018maximum}.

\paragraph{Stochastic maximum principle.}
The SMP generalizes the PMP to stochastic control problems. In what follows, we restrict attention to the first-order version of SMP when the diffusion coefficient does not depend on the control, $\sigma(x,u,t) \coloneqq \sigma(x, t)$, which captures the regime needed for the MSA connection in \Cref{sec:connecting}. The full second-order formulation of SMP for general control-dependent diffusion is summarized in Appendix~\ref{app:second_order_smp} for completeness.

To state the SMP, we introduce another Hamiltonian that couples the backward adjoint $p$ with the martingale integrand $q$:
\begin{talign}
    \mathcal{H}(x,t;u,p,q)
    &=
    f(x,u,t) + \langle p,
    b(x,u,t)
    \rangle
    + \mathrm{tr}(\sigma(x, t)^{\top} q), \quad \label{eq:hamiltonian_smp}
    \\ &\text{where } (x,t,u,p,q) \in \RR^d\times[0,T]\times
    \RR^k
    \times\RR^d\times\RR^{d\times m}.
\end{talign}
For the SOC problem~(\ref{eq:SOC_problem}),
SMP states the following necessary condition for optimality. \rev{For consistency with the Hamiltonian formulation developed in \Cref{sec:main_results}, we adopt the \emph{minimization} convention} (i.e. the Hamiltonian is minimized over $u$): given an optimal control $u^*_t := u^*(X^*_t,t)$ and its corresponding state process $X_t^*$,
there exist adapted processes $p_t^*\in\RR^d$ and $q_t^*\in\RR^{d\times m}$ such that
\begin{empheq}[left=\empheqlbrace]{alignat=2}
    &\dd X_t^* =
    b(X_t^*,
    u^*_t
    ,t)
    \,\dd t + \sigma(X_t^*, t) \,\dd B_t, \label{eq:smp_x_firstorder}\\
    &dp_t^* = - \nabla_x \mathcal{H}\big(X_t^*,t;u^*_t,p_t^*,q_t^*\big)\,\dd t + q_t^*\,\dd B_t, \label{eq:smp_p_firstorder}\\
    &X_0^*\sim P_0,\qquad p_T^* = \nabla_x g(X_T^*),\\
    &\mathcal{H} \big(X_t^*,t;u^*_t,p_t^*,q_t^*\big)=\min_{u\in\mathbb{R}^k} \mathcal{H} \big(X_t^*,t;u,p_t^*,q_t^*\big). \label{eq:smp_Hmax_firstorder}
\end{empheq}
Here `adapted' means $p_t$ and $q_t$ depend only on the Brownian history up to time $t$; more precisely, they are $\mathcal{F}_t$-measurable, where $(\mathcal{F}_t)$ is the filtration generated by $B$. Note that since $\sigma(X_t^*,t)$ does not depend on $u$, the trace term $\mathrm{tr}(\sigma(X_t^*,t)^\top q_t^*)$ drops out of the minimization and \eqref{eq:smp_Hmax_firstorder} is equivalently $\tilde{H}(X_t^*,t;u_t^*,p_t^*) = \min_{u} \tilde{H}(X_t^*,t;u,p_t^*)$, recovering the same pointwise condition (\ref{eq:pmp_Hmax_xdep}) as in the PMP.

Comparing the SMP with the PMP reveals a fundamental structural difference. In the PMP, the backward adjoint equation~\eqref{eq:pmp_adjoint_xdep} is an ODE with known coefficients. In the SMP, the backward adjoint equation~\eqref{eq:smp_p_firstorder} contains an additional stochastic term $q_t^*\,\dd B_t$, where the diffusion coefficient $q_t^*$ is not prescribed a priori but must be solved jointly with $p^*$ so that the pair is adapted to the underlying filtration. Equations of this type are \emph{backward stochastic differential equations} (BSDEs)~\cite{pardoux2005backward}, in which both the backward state and its martingale integrand are unknowns determined implicitly by the terminal condition and the dynamics. \rev{As we will see in \Cref{sec:main_results}, this stands in contrast to the basic adjoint matching framework introduced there, whose adjoint SDEs have explicit diffusion coefficients and can therefore be solved pathwise.}

In the context of SMP, as shown in Appendix~\ref{app:ito_derivation}, the process~(\ref{eq:smp_p_firstorder}) admits a concrete interpretation:
\begin{talign} \label{eq:p_star_q_star}
    p^*_t = \nabla_x V(X^*_t,t), \qquad q^*_t = \nabla^2_x V(X^*_t,t) \sigma(X_t^{*},t),
\end{talign}
where $V(x,t) := J(u^*;x,t)$ is the value function. In particular, $q^*_t$ encodes second-order value function information, \rev{which is precisely the quantity that the lean adjoint matching loss avoids computing.}
\rev{This unknown martingale integrand is the main feature distinguishing the stochastic adjoint equation from its deterministic counterpart.}

\section{The General Adjoint Matching Loss Functions} \label{sec:main_results}

This section derives the \emph{adjoint matching} objectives studied in this paper from a Hamiltonian perspective. The starting point is the Hamiltonian formulation of stochastic optimal control: at an optimal control, the \rev{Hamilton--Jacobi--Bellman (}HJB\rev{)} stationarity conditions reduce to a pointwise minimization of the Hamiltonian, which involves the cost-to-go derivatives $\nabla_x J$ and $\nabla_x^2 J$. Adjoint matching turns this stationarity condition into a learning objective by replacing $\nabla_x J$ and $\nabla_x^2 J$ with pathwise adjoint estimators computed along simulated trajectories. Table~\ref{tab:notation} summarizes the Hamiltonians and adjoint processes used throughout.

We first treat the general SOC problem~\eqref{eq:general_control_objective} where both drift and diffusion may depend on the control. This yields a \emph{basic adjoint matching} (BAM) loss involving first- and second-order adjoint equations. We then specialize to the practically important setting $\sigma(x,u,t)=\sigma(t)$ (as in diffusion models), where the diffusion term does not affect the Hamiltonian minimizer; this leads to the \emph{lean} adjoint matching (AM) loss that avoids second-order quantities, recovering the objective stated in \Cref{subsec:original_am} as a special case. Full regularity and well-posedness conditions are collected in Appendix~\ref{subsec:derivation_AM}; whenever we identify a critical point with the optimal control, we additionally invoke the verification hypotheses in Lemma~\ref{lem:general_f_hjb_verification}.

\rev{\paragraph{Assumptions and scope.}
The results below are stated under standard sufficient smoothness and integrability assumptions~\citep{yong2012stochastic}: $b,\sigma,f$ are $C^1$ in $(x,u)$, $g$ is $C^1$ in $x$, the relevant derivatives have at most polynomial growth, and the controlled SDE is well posed with the moments needed to justify differentiation under the expectation. The second-order adjoint requires the corresponding $C^2$ regularity in $x$, and the verification step that identifies stationary points with the optimal control additionally assumes a classical $C^{1,2}$ cost-to-go and the convexity/uniqueness conditions stated in Lemma~\ref{lem:general_f_hjb_verification}. These are sufficient conditions, not minimal ones; relaxing them, or deriving analogous guarantees after restricting the Markov controls to particular finite-dimensional model classes, is beyond the scope of this work.}

\begin{table}[t]
\centering
\caption{Key Hamiltonians and adjoint processes used in this paper.}
\label{tab:notation}
\small
\begin{tabular}{@{}cll@{}}
\toprule
Symbol & Description & Reference \\
\midrule
\multicolumn{3}{@{}l}{\textsc{Hamiltonians}} \\[2pt]
$H(x,t;u,p,M)$ & Full HJB Hamiltonian (general $\sigma$) & Eq.~(\ref{eq:general_hamiltonian}) \\
$\tilde{H}(x,t;u,p)$ & Simplified Hamiltonian ($\sigma{=}\sigma(t)$; no second-order term) & Eq.~(\ref{eq:simplified_hamiltonian_0}) \\
$\mathcal{H}(x,t;u,p,q)$ & SMP Hamiltonian (includes martingale integrand $q$) & Eq.~(\ref{eq:hamiltonian_smp}) \\
\midrule
\multicolumn{3}{@{}l}{\textsc{Adjoint processes}} \\[2pt]
$a_t$ & First-order adjoint SDE; estimates $\nabla_x J$ & Eq.~(\ref{eq:general_basic_adjoint}) \\
$\tilde{a}_t$ & Lean adjoint ODE ($\sigma{=}\sigma(t)$); estimates $\nabla_x J$ & Eq.~(\ref{eq:general_adjoint}) \\
$A_t$ & Second-order adjoint SDE; estimates $\nabla_x^2 J$ & Eq.~(\ref{eq:second_adjoint_matrix_bsde}) \\
$(p_t^*,\, q_t^*)$ & SMP adjoint BSDE pair; $q_t^*$ is unknown integrand & Eq.~(\ref{eq:smp_p_firstorder}) \\
\bottomrule
\end{tabular}
\end{table}

Our construction starts with the full adjoint processes that provide (unbiased) access to the first-order and second-order derivatives of the cost-to-go under a fixed Markov control policy with respect to the current state. Concretely, for a fixed control $\bar u$, define the cost-to-go
\begin{equation}
J(\bar u;x,t):=\EE\Big[\int_t^T f(X_s^{\bar u},\bar u(X_s^{\bar u},s),s)\,\dd s+g(X_T^{\bar u})\ \Big|\ X_t^{\bar u}=x\Big].
\end{equation}
The adjoint processes $a_t$ and $A_t$ defined below will provide pathwise estimates for the gradient and Hessian of $J(\bar u;\cdot,\cdot)$ along the controlled dynamics (for a primer on first-order adjoint sensitivity for SDEs, see~\cite{leburu2026differentiating}; our constructions go further by incorporating second-order adjoint processes and general control-dependent diffusion):
\begin{talign}
\EE\!\big[a(t;X^{\bar u})\mid X_t^{\bar u}=x\big]=\nabla_x J(\bar u;x,t), \label{eq:adjoint_conditional_mean1}\\
\EE\!\big[A(t;X^{\bar u})\mid X_t^{\bar u}=x\big]=\nabla_x^2 J(\bar u;x,t). \label{eq:adjoint_conditional_mean2}
\end{talign}

To put it formally with a convenient notation, we denote by $\sigma_{\cdot k}(x,u,t)\in\mathbb{R}^d$ the $k$-th column of $\sigma(x,u,t)$ for $k\in\{1,\dots,m\}$, and set the total derivative
\[
G_k^u(x,t):=\nabla_y \sigma_{\cdot k}(y,u(y,t),t)\rvert_{y=x} \in\mathbb{R}^{d\times d}.
\]
Let $a_t := a(t;X)\in \RR^d $ solve the first-order adjoint SDE backwardly in time:
\begin{talign}
\begin{split}
    \dd a_t
    &=
    \Big(
    -\nabla_x b(x,u(x,t),t)\big|_{x=X_t}^{\top}\, a_t
    -\nabla_x f(x,u(x,t),t)\big|_{x=X_t}
\\ &\qquad\qquad
    +\sum_{k=1}^m G_k^u(X_t,t)^\top G_k^u(X_t,t)^\top\, a_t
    \Big)\,\dd t
    -\sum_{k=1}^m G_k^u(X_t,t)^\top\, a_t \,\dd B_t,
\end{split}
\label{eq:general_basic_adjoint}
\\
a_T &=\nabla_x g(X_T).
\label{eq:general_basic_adjoint_2}
\end{talign}
and let $A_t := A(t;X)\in \RR^{d\times d}$ solve the second-order adjoint SDE backwardly in time:
\begin{talign}
\label{eq:second_adjoint_matrix_bsde}
\begin{split}
\dd A_t
&=
-\Big(
\nabla_x b(x,u(x,t),t)\big|_{x=X_t}^\top A_t + A_t\nabla_x b(x,u(x,t),t)\big|_{x=X_t}
+ \sum_{k=1}^m G^u_k(X_t,t)^\top A_t G^u_k(X_t,t)
\\
&\qquad \
+ \nabla_x^2 f(x,u(x,t),t)\big|_{x=X_t}
+ \sum_{i=1}^d a_t^{(i)} \,\nabla_x^2 b_i(x,u(x,t),t)\big|_{x=X_t}
\\
&\qquad \
+ \frac12\sum_{k=1}^m\sum_{i=1}^d a_t^{(i)}\,\nabla_x^2\!\big(\sigma_{\cdot k}^{(i)}(x,u(x,t),t)\big)\big|_{x=X_t} \,G^u_k(X_t,t)
\Big)\,\dd t
+\sum_{k=1}^m U_{t,k}\,\dd B_t^k,
\\
A_T
&=\nabla_x^2 g(X_T),
\end{split}
\end{talign}
where $B_t^k$ denotes the $k$-th component and the diffusion matrices $U_{t,k}\in\R^{d\times d}$ are given explicitly by
\begin{talign}
\label{eq:Utk_explicit}
U_{t,k}
=
-\Big(
A_t\,G^u_k(X_t,t)+G^u_k(X_t,t)^\top A_t
+\sum_{i=1}^d a_t^{(i)}\,\nabla_x^2\big(\sigma_{ik}(x,u(x,t),t)\big)\rvert_{x = X_t}
\Big).
\end{talign}
We note that although \eqref{eq:general_basic_adjoint} and \eqref{eq:second_adjoint_matrix_bsde} are posed backward in time, every diffusion coefficient ($G_k^\top a_t$ and $U_{t,k}$, respectively) is given as an explicit function of the current state. These equations can therefore be solved pathwise via time-reversal. This is in contrast to the \emph{backward stochastic differential equation (BSDE)} for the SMP adjoint introduced in \Cref{sec:smp}, where an additional unknown martingale integrand $q_t$ must be determined jointly with the backward state.

With the above adjoint processes in hand, the following general basic matching loss function provides a reformulation of the original SOC problem. Its proof can be found in Appendix~\ref{subsec:derivation_AM}.

\begin{theorem}[General Hamiltonian basic adjoint matching]
\label{thm:general_hamiltonian_basic_adjoint_matching}
Consider the SOC problem~(\ref{eq:general_control_objective})--(\ref{eq:general_SDE}).
Define the Hamiltonian
\begin{align}
\label{eq:general_hamiltonian}
H(x,t;u,p,M)
&:=
f(x,u,t)
+\langle b(x,u,t),p\rangle
+\tfrac12\Tr\!\big(\sigma(x,u,t)\sigma(x,u,t)^\top M\big), \\
&\text{where } (x,t,u,p,M) \in \RR^d\times[0,T]\times\RR^k\times\RR^d\times\RR^{d\times d},
\end{align}
and consider the basic adjoint matching loss
\begin{talign}
\label{eq:general_adj_matching_loss}
\mathcal L_{\mathrm{BAM}}(u;X^{\bar u})
:=
\int_0^T
H \big(
X_t^{\bar u},t;\,
u(X_t^{\bar u},t),\,
a(t;X^{\bar u}),\,
A(t;X^{\bar u})
\big)\,\dd t,
\qquad
\bar u=\mathrm{stopgrad}(u).
\end{talign}
Alternatively, $a(t;X^{\bar u})$ and $A(t;X^{\bar u})$ can be replaced by any unbiased estimates of $\nabla_x J(\bar u;X^{\bar u}_t,t)$ and $\nabla_x^2 J(\bar u;X^{\bar u}_t,t)$~(cf. \cref{eq:adjoint_conditional_mean1,eq:adjoint_conditional_mean2}).

Then, the functional $u\mapsto \EE[\mathcal L_{\mathrm{BAM}}(u;X^{\bar u})]$ has the same first
variation as the original control objective\cref{eq:general_control_objective}. Moreover, any critical point\footnote{A critical point is a point with zero first-order variation.}
$\hat u$ of $u\mapsto \EE[\mathcal L_{\mathrm{BAM}}(u;X^{\bar u})]$ satisfies the pointwise stationarity condition
\begin{talign}
\label{eq:hamiltonian_stationarity}
\nabla_u H \Big(
x,t;\hat u(x,t),
\nabla_x J(\hat u;x,t),
\nabla_x^2 J(\hat u;x,t)
\Big)=0.
\end{talign}
If, moreover, $u\mapsto H(x,t;u,p,M)$ is convex, then stationarity is equivalent to pointwise minimization:
\begin{talign} \label{eq:hamiltonian_stationarity_2}
\hat u(x,t)\in \arg\min_{u\in\mathbb{R}^k}
H\Big(x,t;u,\nabla_x J(\hat u;x,t),\nabla_x^2 J(\hat u;x,t)\Big).
\end{talign}
In this case, $J(\hat u;\cdot,\cdot)$ satisfies the HJB equation
\begin{talign} \label{eq:HJB}
\partial_t J
+\inf_{u\in\mathbb{R}^k}
H\Big(x,t;u,\nabla_x J,\nabla_x^2 J\Big)=0,
\qquad
J(\cdot,T)=g.
\end{talign}
In particular, if the HJB solution is unique, then $\hat u=u^\star$ is the
optimal control.
\end{theorem}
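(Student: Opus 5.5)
The plan rests on three ingredients: the first variation of the original objective $\mathcal J(u):=\EE[\int_0^T f\,\dd t+g(X_T^u)]$ in a direction $\delta u$, the tower property, and the conditional-mean identities \cref{eq:adjoint_conditional_mean1,eq:adjoint_conditional_mean2}.

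\textbf{Step 1: first variation of the original objective.} Fix $u$ and perturb to $u+\epsilon\,\delta u$. Write $\mathcal L^{u}$ for the generator of the controlled SDE under control $u$. By Kolmogorov's backward equation, the cost-to-go $J(u;\cdot,\cdot)$ solves the linear PDE
\begin{talign*}
\partial_t J + H\big(x,t;u(x,t),\nabla_x J,\nabla_x^2 J\big)=0,
\qquad J(\cdot,T)=g.
\end{talign*}
Take $\tfrac{\dd}{\dd\epsilon}$ at $\epsilon=0$. The derivative $\dot J$ then solves the same linear equation, with generator $\mathcal L^u$, driven by the source $\langle \nabla_u H(x,t;u,\nabla_xJ,\nabla_x^2J),\delta u\rangle$ and with zero terminal data. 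Feynman--Kac then gives
\begin{talign*}
\frac{\dd}{\dd\epsilon}\mathcal J(u+\epsilon\delta u)\Big|_{\epsilon=0}
=\EE\int_0^T \Big\langle \nabla_u H\big(X_t^u,t;u(X_t^u,t),\nabla_xJ(u;X_t^u,t),\nabla_x^2J(u;X_t^u,t)\big),\,\delta u(X_t^u,t)\Big\rangle\dd t .
\end{talign*}
This is the standard "performance difference" identity, and I would justify it using the regularity assumptions ($C^{1,2}$ cost-to-go and polynomial growth). Equivalently, one can apply Itô's formula to $J(u;X_t^{u+\epsilon\delta u},t)$ and differentiate in $\epsilon$.

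\textbf{Step 2: first variation of the BAM functional.} Because $\bar u$ is stop-gradiented, the trajectory law and the adjoints $a,A$ do not move with $u$. So $\frac{\dd}{\dd\epsilon}\EE[\mathcal L_{\mathrm{BAM}}]$ equals
\begin{talign*}
\EE\int_0^T\big\langle\nabla_uH(X_t,t;u,a_t,A_t),\delta u\big\rangle\dd t .
\end{talign*}
The key observation is that $H$ is affine in $(p,M)$, so $\nabla_u H$ is affine in $(a_t,A_t)$. Conditioning on $X_t^{\bar u}$ and using \cref{eq:adjoint_conditional_mean1,eq:adjoint_conditional_mean2}, with $\bar u=u$, replaces $(a_t,A_t)$ by $(\nabla_xJ,\nabla_x^2J)$. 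This matches Step 1 exactly. The same argument covers any other unbiased estimators.

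\textbf{Step 3: main obstacle.} The main obstacle is establishing the conditional-mean identities themselves, should one want to prove them rather than assume them from their stated role. For $a_t$, I would differentiate the flow $x\mapsto X_s^{t,x}$. Write $\dd\, \partial_x X_s=\nabla_x b\,\partial_xX_s\,\dd s+\sum_k G_k\,\partial_xX_s\,\dd B^k_s$, with the total derivatives through $u(x,t)$ included in $\nabla_x b$, $\nabla_x f$ and $G_k$. Then $\nabla_xJ=\EE[\partial_xX^\top(\cdots)]$, and one verifies that $a_t$ is $(\partial_x X_s)^{-\top}$-transported so that $\langle a_s,\partial_xX_s\rangle+\int_t^s\nabla f\cdot\partial_xX$ is a martingale. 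The Itô correction in the inverse Jacobian produces the term $G_k^\top G_k^\top a$. For $A_t$ the analogous computation uses second variations, and it produces the listed terms and $U_{t,k}$. I expect the bookkeeping of the Itô cross-variations to be the delicate part.

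\textbf{Step 4: conclusions.} A critical point $\hat u$ gives $\EE\int\langle\nabla_uH(\cdots),\delta u\rangle=0$ for all $\delta u$. This forces $\nabla_uH=0$ $\rho_t$-a.e. along the marginals of $X^{\hat u}$, and hence pointwise under positivity and continuity of the marginals, which is \eqref{eq:hamiltonian_stationarity}. Under convexity in $u$, stationarity is equivalent to minimization. Substituting into the Kolmogorov equation of Step 1, with $u=\hat u$, then yields \eqref{eq:HJB}. Uniqueness of the HJB solution, together with the verification Lemma~\ref{lem:general_f_hjb_verification}, identifies $J(\hat u)=V$ and hence $\hat u=u^\star$.
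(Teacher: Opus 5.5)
Your proposal is correct and is essentially the paper's proof. The paper obtains the first variation of $\mathcal J$ by applying It\^o's formula to $J(u;X_t^{u+\epsilon v},t)$ together with the frozen-control equation $\partial_tJ+H(x,t;u(x,t),\nabla_xJ,\nabla_x^2J)=0$; this is the alternative you mention, whereas your primary route of differentiating the Kolmogorov equation in $\epsilon$ needs slightly more regularity of $\epsilon\mapsto J(u+\epsilon v)$. It then uses stop-gradient, the tower property with the adjoint lemmas, the verification lemma and HJB uniqueness exactly as in your Steps 2 and 4.

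One caution about your optional Step 3. If you do the second-variation bookkeeping in the same It\^o convention that produces the $+\sum_k G_k^\top G_k^\top a_t\,\dd t$ term of the first-order adjoint, you reproduce $U_{t,k}$ but not the displayed drift of $A_t$. For example, for $\dd X_t=X_t\,\dd B_t$ the pathwise Hessian $A_t=g''(X_T)(X_T/X_t)^2$ satisfies $\dd A_t=3A_t\,\dd t-2A_t\,\dd B_t$, not $-A_t\,\dd t-2A_t\,\dd B_t$. So you should anchor the argument on a genuinely unbiased Hessian estimator, as the theorem's ``any unbiased estimates'' clause permits; that is all your Steps 1--2 actually use.
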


The BAM objective~(\ref{eq:general_adj_matching_loss}) in \Cref{thm:general_hamiltonian_basic_adjoint_matching} provides a variationally correct Hamiltonian formulation for general SOC, but its implementation requires integrating the $d\times d$ matrix SDE~\eqref{eq:second_adjoint_matrix_bsde} for the second-order adjoint $A_t$. In the practically important setting $\sigma(x,u,t)=\sigma(t)$, standard in diffusion models, two key simplifications occur: (i) $G_k^u\equiv 0$, so the first-order adjoint SDE~\eqref{eq:general_basic_adjoint} reduces to an ODE; and (ii) the trace term $\tfrac{1}{2}\mathrm{Tr}(\sigma(t)\sigma(t)^\top A_t)$ in the Hamiltonian~\eqref{eq:general_hamiltonian} no longer depends on $u$, so $A_t$ drops out of the stationarity condition, eliminating the need for the matrix SDE entirely. While this specializes the diffusion, the drift and running cost remain fully general, extending beyond the control-affine quadratic-cost setting of \cite{domingoenrich2025adjoint}; this yields a lean AM loss whose critical points, under a standard HJB uniqueness assumption, recover the optimal control.

\begin{theorem}[General Hamiltonian (lean) adjoint matching] \label{thm:general_hamiltonian_lean_adjoint_matching}
    Consider the SOC problem and notation from \Cref{thm:general_hamiltonian_basic_adjoint_matching}, in the particular case that the diffusion coefficient depends only on time: $\sigma(x,u,t) := \sigma(t)$. 
    Let $\nabla_1 f(x,u(x,t),t)$ and $\nabla_1 b(x,u(x,t),t)$ denote the partial derivatives with respect to the first argument $x$, as opposed to $\nabla_x f(y,u(y,t),t)\big|_{x=y}^{\top}$ and $\nabla_x b(y,u(y,t),t)\big|_{x=y}^{\top}$, which are the total derivatives.
    Let $\tilde{a}(t;X)$ solve the lean adjoint ODE:
    \begin{talign}
    \begin{split}
        \frac{\mathrm{d} \tilde a}{\mathrm{d}t}(t;X)
        &=
        -\nabla_1 b(X_t,u(X_t,t),t)^{\top}\, \tilde a(t;X)
        -\nabla_1 f(X_t,u(X_t,t),t),
    \end{split}
    \label{eq:general_adjoint}
    \\
    \tilde a(T;X)&=\nabla_x g(X_T).
    \end{talign}
    \rev{Recall the simplified Hamiltonian} 
    \begin{talign}
    \tilde H(x,t;u,p) = f(x,u,t)+\langle b(x,u,t),p\rangle,
    \end{talign}
    \rev{from~\eqref{eq:simplified_hamiltonian_0},} and the (lean) adjoint matching loss
    \begin{talign}
    \label{eq:lean_adj_matching_loss}
    \mathcal L_{\mathrm{AM}}(u;X^{\bar u})
    :=
    \int_0^T
    \tilde H
    \big(
    X_t^{\bar u},t;\,
    u(X_t^{\bar u},t),\,
    \tilde a(t;X^{\bar u})
    \big)\,\dd t,
    \qquad
    \bar u=\mathrm{stopgrad}(u).
    \end{talign}
    Any critical point $\hat{u}$ of $u\mapsto \EE[\mathcal L_{\mathrm{AM}}(u;X^{\bar u})]$ satisfies the pointwise stationarity condition
    \begin{talign}
    \label{eq:lean_hamiltonian_stationarity}
    \nabla_u \tilde{H} \Big(
    x,t;\hat u(x,t),
    \nabla_x J(\hat u;x,t)
    \Big)=0.
    \end{talign}
    If, moreover, $u \mapsto \tilde{H}(x,t;u,p)$ is convex, then stationarity is equivalent to pointwise minimization:
    \begin{talign} \label{eq:lean_hamiltonian_stationarity_2}
    \hat u(x,t)\in \arg\min_{u\in\mathbb{R}^k}
    \tilde{H}\Big(x,t;u,\nabla_x J(\hat u;x,t)\Big).
    \end{talign}
    In particular, if the HJB solution is unique, then $\hat u=u^\star$ is the optimal control.
\end{theorem}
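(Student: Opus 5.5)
Our plan is to reduce the statement to \Cref{thm:general_hamiltonian_basic_adjoint_matching}. The key point is that when $\sigma(x,u,t)=\sigma(t)$, the lean loss differs from the BAM loss by terms that do not depend on the optimization variable $u$, and the adjoint that enters it has the right conditional mean.

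\textbf{Step 1: the lean adjoint is unbiased.} With $\sigma$ independent of $x$ and $u$, we have $G_k^u\equiv 0$, so the first-order adjoint SDE~\eqref{eq:general_basic_adjoint} becomes an ODE. The subtlety is that~\eqref{eq:general_basic_adjoint} uses the \emph{total} derivatives $\nabla_x b(x,u(x,t),t)$ and $\nabla_x f(x,u(x,t),t)$, whereas~\eqref{eq:general_adjoint} uses only the partials $\nabla_1$. By the chain rule, the total derivative equals the partial $\nabla_1$ plus the term $\nabla_x u^\top \nabla_2$. Hence the two adjoints differ by
\[
-\nabla_x u(X_t,t)^\top\big(\nabla_2 b^\top \tilde a+\nabla_2 f\big)=-\nabla_x u^\top \nabla_u \tilde H(X_t,t;u,\tilde a)
\]
in the drift.

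So $\tilde a$ is not, in general, an unbiased estimator of $\nabla_x J(\bar u;\cdot)$. It becomes one only at controls where $\EE[\nabla_u\tilde H(X_t,t;\bar u,\tilde a)\mid X_t]=0$, and we therefore cannot simply invoke the BAM theorem at arbitrary $\bar u$. Instead, we work directly at a critical point $\hat u$.

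\textbf{Step 2: first variation of the lean loss.} For a perturbation $u=\hat u+\epsilon\phi$, the trajectory and $\tilde a$ are frozen by the stopgrad. Differentiating under the expectation then gives
\[
\int_0^T\EE\big[\langle \nabla_u\tilde H(X_t,t;\hat u(X_t,t),\tilde a_t),\phi(X_t,t)\rangle\big]\dd t .
\]
Requiring this to vanish for all $\phi$ and conditioning on $X_t$ yields
\[
\nabla_u\tilde H\big(x,t;\hat u(x,t),\bar a(x,t)\big)=0, \qquad \bar a(x,t):=\EE[\tilde a_t\mid X_t^{\hat u}=x].
\]
This uses that $\tilde H$ is affine in $p$. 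With it, the extra drift term in Step 1 has zero conditional expectation.

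\textbf{Step 3: identifying the conditional mean (main obstacle).} We need to show that $\bar a=\nabla_x J(\hat u;\cdot)$ when the extra term vanishes only in conditional mean. We would let $a_t$ be the full adjoint ODE (total derivatives), so that $\EE[a_t\mid X_t]=\nabla_xJ$ by~\eqref{eq:adjoint_conditional_mean1}, and set $\delta_t=a_t-\tilde a_t$. Then $\delta_t$ solves a linear backward ODE with $\delta_T=0$ and source $-\nabla_x\hat u^\top\nabla_u\tilde H(X_t,t;\hat u,\tilde a_t)$, so $\delta_t=\int_t^T\Phi(s,t)^\top(\cdots)\,\dd s$ with a pathwise resolvent $\Phi$.

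The difficulty is that $\Phi$ depends on the path, so a conditional-mean-zero source need not give $\EE[\delta_t\mid X_t]=0$. Our first attempt would therefore be an alternative route avoiding $\delta$. We would verify directly, using the Markov property and Itô's formula, that $\bar a$ satisfies the linearized Kolmogorov (Feynman--Kac) equation for $\nabla_x J$:
\[
\partial_t w+\mathcal L^{\hat u}w+\nabla_1 b^\top w+\nabla_1 f+\nabla_x\hat u^\top\nabla_u\tilde H(x,t;\hat u,w)=0 .
\]
This is the equation obtained by differentiating the Kolmogorov equation for $J(\hat u)$ in $x$. Since $\bar a$ satisfies the same equation with the last term absent, and since Step 2 gives $\nabla_u\tilde H(x,t;\hat u,\bar a)=0$, $\bar a$ also solves the $\nabla_xJ$ equation. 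Uniqueness for this linear PDE, under the standing regularity assumptions, then gives $\bar a=\nabla_xJ(\hat u)$, which establishes~\eqref{eq:lean_hamiltonian_stationarity}.

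\textbf{Step 4: conclusion.} If $u\mapsto\tilde H$ is convex, stationarity is equivalent to the minimization~\eqref{eq:lean_hamiltonian_stationarity_2}. Because $\tfrac12\Tr(\sigma\sigma^\top M)$ does not depend on $u$, this is the same as minimizing the full $H$. Combining this with the Kolmogorov equation for $J(\hat u)$ shows that $J(\hat u)$ solves the HJB equation~\eqref{eq:HJB}, exactly as in \Cref{thm:general_hamiltonian_basic_adjoint_matching}. Invoking Lemma~\ref{lem:general_f_hjb_verification} and HJB uniqueness then gives $\hat u=u^\star$.
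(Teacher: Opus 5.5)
Your argument is correct and has the same skeleton as the paper's proof. Both work through four steps:
\begin{itemize}
\item stationarity at $\hat u$ is written through $\bar a=\EE[\tilde a_t\mid X_t^{\hat u}]$, using that $\tilde H$ is affine in $p$;
\item $\bar a$ satisfies the linear backward equation with partial derivatives $\nabla_1$;
\item adding the vanishing term $\nabla_x\hat u^\top\nabla_u\tilde H(\cdot;\hat u,\bar a)$ turns it into the equation with total derivatives;
\item a uniqueness step identifies $\bar a$ with $\nabla_xJ(\hat u;\cdot)$.
\end{itemize}

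The difference is that the paper never passes to a PDE. It conditions the integral form of the lean ODE on $X_t$, where the Markov property and the tower rule put $\EE[\tilde a_s\mid X_s]$ under the integral. It then adds the zero term and observes that $\EE[a_t\mid X_t]$ for the full adjoint (an ODE when $\sigma=\sigma(t)$) solves the same state-conditioned integral equation. It concludes with the Gr\"onwall-type Lemma~\ref{prop:uniqueness} together with $\EE[a_t\mid X_t]=\nabla_xJ$.

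Compared with your differentiated-Kolmogorov formulation, the paper's route buys two things. It needs neither $\bar a\in C^{1,2}$ nor third spatial derivatives of $J(\hat u)$. It also uses the stationarity identity only along the process, which is exactly what your Step~2 provides. A whole-space PDE uniqueness argument would additionally need that identity off the support of the law of $X_t^{\hat u}$.

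The obstacle that made you abandon the $\delta_t$ route is not real, and that route is in fact the most direct one. Write $\delta_t=\int_t^T\Phi(s,t)^\top S_s\,\dd s$ with $S_s=\nabla_x\hat u^\top\nabla_u\tilde H(X_s,s;\hat u,\tilde a_s)$.
\begin{itemize}
\item The resolvent depends only on the path over $[t,s]$, so it is $\mathcal F_s$-measurable.
\item $\tilde a_s$ is a functional of the path after time $s$, so the Markov property gives $\EE[S_s\mid\mathcal F_s]=\nabla_x\hat u^\top\nabla_u\tilde H(X_s,s;\hat u,\bar a(X_s,s))$, which is $0$ by Step~2.
\item The tower property then yields $\EE[\delta_t\mid\mathcal F_t]=0$, i.e.\ $\EE[\tilde a_t\mid X_t]=\EE[a_t\mid X_t]=\nabla_xJ(\hat u;X_t,t)$.
\end{itemize}
This identification needs no uniqueness lemma at all.
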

The proof is given in Appendix~\ref{subsec:derivation_lean_AM}.
We now verify that the original adjoint matching objective of \citet{domingoenrich2025adjoint} is recovered as a special case.

\begin{remark}[Recovering the original adjoint matching loss]
\label{rmk:recovering_original_am}
The control-affine quadratic-cost SOC problem~\eqref{eq:SOC_problem} studied by \citet{domingoenrich2025adjoint} (cf.\ \Cref{subsec:original_am}) is a special case of \Cref{thm:general_hamiltonian_lean_adjoint_matching}. Using the notation of \Cref{subsec:soc_equivalence}, the drift, running cost, and diffusion in the general formulation~\eqref{eq:general_control_objective} specialize to
\begin{talign}
b(x,u,t) = b(x,t) + \sigma(t)u, \qquad f(x,u,t) = f(x,t) + \tfrac{1}{2}\|u\|^2, \qquad \sigma(x,u,t) = \sigma(t),
\end{talign}
where $b(x,t)$, $f(x,t)$, and $\sigma(t)$ on the right-hand side are the base drift, state cost, and diffusion coefficient from~\eqref{eq:SOC_problem}. The simplified Hamiltonian~\eqref{eq:simplified_hamiltonian_0} then becomes
\begin{talign}
\tilde{H}(x,t;u,p) = f(x,t) + \tfrac{1}{2}\|u\|^2 + \langle b(x,t) + \sigma(t)u,\, p \rangle,
\end{talign}
and the lean adjoint ODE~\eqref{eq:general_adjoint} reduces to~\eqref{eq:lean_adjoint_background}. Substituting into the lean adjoint matching loss~\eqref{eq:lean_adj_matching_loss} and dropping terms independent of $u$ yields
\begin{talign}
    \mathcal{L}_{\mathrm{AM}}(u; X^{\bar{u}}) = \int_0^T \tfrac{1}{2} \|u(X_t^{\bar{u}}, t) + \sigma(t)^\top \tilde{a}(t; X^{\bar{u}})\|^2 \, \dd t, \qquad \bar{u} = \mathrm{stopgrad}(u),
\end{talign}
which recovers exactly the adjoint matching objective in \cite[Equations~(37)--(39)]{domingoenrich2025adjoint} as stated in \Cref{subsec:original_am}.
\end{remark}

\rev{\begin{remark}[Beyond the control-affine quadratic-cost case]
\Cref{thm:general_hamiltonian_lean_adjoint_matching} does more than recover the original adjoint matching objective: it extends the lean AM construction to general control dependence in the drift and general convex running costs. Specifically, the drift may be any $C^1$ function $b(x,u,t)$ of the control, and the running cost $f(x,u,t)$ may be any convex function of $u$, while the resulting lean AM loss keeps the same Hamiltonian-regression structure as the original algorithm: it is still an integral of the simplified Hamiltonian $\tilde H$ along simulated trajectories, with only $\tilde H$ changing. This covers settings with nonlinear control effects or non-quadratic control penalties, such as thrust-direction dynamics in robotics, non-quadratic transaction costs in portfolio optimization, or exponential control dependence in reaction-rate models, provided the diffusion is control-independent. This makes \Cref{thm:general_hamiltonian_lean_adjoint_matching} directly implementable in settings where the original algorithm does not apply. The algorithmic price is that the pointwise Hamiltonian minimization need not have the closed form available in the control-affine quadratic-cost case; in general, one should expect to optimize the corresponding Hamiltonian term numerically, for example by gradient-based methods.
\end{remark}}

\section{Connecting Adjoint Matching to the Stochastic Maximum Principle}
\label{sec:connecting}
Having established the adjoint matching losses as principled objectives in \Cref{sec:main_results} \rev{(building on the SMP review in \Cref{sec:smp})}, we now connect the two.
Recall that the results of \Cref{sec:main_results} are variational: they show that the lean AM loss has the correct critical points, so that any stationary control is optimal, but do not say whether the \emph{optimization dynamics} used to reach those critical points are themselves principled.
We now formulate the \rev{method of successive approximations (}MSA\rev{)} induced by the \rev{stochastic maximum principle (}SMP\rev{)} in the $\sigma = \sigma(t)$ setting, and then show that the gradient flow of the lean AM loss coincides with a continuous-time version of this procedure.

Consider the SMP in the case where $\sigma(x,u,t) = \sigma(t)$, which simplifies to
\begin{empheq}[left=\empheqlbrace]{alignat=2}
    &\dd X^*_t = 
    b(X^*_t, u^*_t, t)
    \dd t + \sigma(t) \dd B_t, \label{eq:smp_x} \\
    &dp_t^* \! = \! - \big( \nabla_1 f(X^*_t, u^*_t, t) \! + \! \nabla_1 b(X^*_t, u^*_t, t)^{\top} p^{*}_t \big)\,\dd t \! + \! q_t^*\,\dd B_t, 
    \label{eq:smp_p}\\
    &X_0^* \sim P_0, \quad\quad p^{*}_T =  \nabla_x g(X_T^*),\\
    &0 = \nabla_2 f(X_t^{*},u_t^{*},t) + \nabla_2 b(X_t^{*},u_t^{*},t)^{\top} p^{*}_t.
\end{empheq}
The resulting MSA reads:
\begin{enumerate}
    \item Solve forwardly $\dd X^k_t = 
    b(X^k_t, u^k_t, t)
    \dd t + \sigma(t) \dd B_t, \quad X^k_0 \sim P_0$.
    \item Solve backwardly $dp_t^k \! = \! - \big( \nabla_1 f(X^k_t, u^k_t, t) \! + \! \nabla_1 b(X^k_t, u^k_t, t)^{\top} p^{k}_t \big)\,\dd t \! + \! q_t^k\,\dd B_t, \quad p^k_T =  \nabla_x g(X^k_T)$.
    \item Solve the Hamiltonian optimization condition: 
    \begin{talign}
    \label{eq:adjoint_matching_obj_MSA}
     u^{k+1}_t \in \arg\min_{u\in\mathbb{R}^k} \{ \tilde{H} \big(X_t^k,t;u,p_t^k\big) \}, \qquad \text{where $\tilde{H}$ is the simplified Hamiltonian in\cref{eq:simplified_hamiltonian_0}.}
    \end{talign}
\end{enumerate}
Observe that this algorithm faces two obstacles in high dimensions.
\rev{First, Step~2 is not fully specified: computing $q^k$ requires solving the BSDE $dp_t^k = - \nabla_x \mathcal{H}\big(X_t^k,t;u_t^k,p_t^k,q_t^k\big)\,\dd t + q_t^k\,\dd B_t$, which the SMP does not prescribe how to do. This requirement remains even when $\sigma$ does not depend on $u$ or $x$, since $q_t^k$ still appears in the stochastic term $q_t^k\,\dd B_t$. In principle, one could approximate $q_t^k$ by solving the full high-dimensional BSDE using deep learning--based methods such as the Deep BSDE approach~\cite{e2017deep,han2018solving} (see \cite{han2024brief} for a review), but this incurs substantial additional cost. Alternatively, using~\cref{eq:p_star_q_star}, one could construct $q_t^k = A(t;X^k)\sigma(t)$, where $A(t;X^k)$ is the unbiased estimate of $\nabla_x^2 J(u^k; X_t^k,t)$ from the matrix SDE \eqref{eq:second_adjoint_matrix_bsde}; this is also significantly more expensive because it propagates a $d\times d$ matrix SDE.}
Second, while the Hamiltonian minimization in Step~3 is well defined and may even admit an explicit solution in function space, this does not directly translate into an implementable update once the control is represented by a neural network; one must instead rely on an optimization procedure in parameter space.
The following theorem resolves both issues at once: the gradient flow of the lean AM loss coincides with a continuous-time MSA that directly optimizes the Hamiltonian of Step~3, while avoiding any explicit computation of the adapted process $q_t$ required in Step~2.

\begin{theorem}[Adjoint matching as the continuous-time MSA]
\label{thm:am_msa}
    Consider the following continuous-time variant of the MSA where we parameterize the control with a vector field: $u_t = u(X^{u}_t,t)$, and we consider the following dynamics for the vector field $u$ indexed by a continuous variable $\tau$ instead of $k$:
    \begin{empheq}[left=\empheqlbrace]{alignat=2}
    &\textstyle{\dd X^{\tau}_t = 
    b(X^{\tau}_t, u^{\tau}(X^{\tau}_t, t), t)
    \dd t + \sigma(t) \dd B_t, \qquad X_0^{\tau} \sim P_0,} \label{eq:smp_x_connecting} \\
    &\textstyle{\dd p_t^{\tau} \! = \! - \big( \nabla_1 f(X^{\tau}_t, u^{\tau}(X^{\tau}_t, t), t) \! + \! \nabla_1 b(X^{\tau}_t, u^{\tau}(X^{\tau}_t, t), t)^{\top} p^{\tau}_t \big)\, \dd t \! + \! q_t^{\tau} \, \dd B_t, \qquad p^{\tau}_T = \nabla_x g(X_T^{\tau}),}
    \label{eq:smp_p_connecting}\\
    &\textstyle{\frac{\dd}{\dd \tau}u^{\tau} = - \frac{\dd}{\dd u} \mathbb{E}\big[ \int_0^T \tilde{H}(X^{\tau}_t,t;u(X_t^{\tau},t),p^{\tau}_t) \, \dd t \big] \rvert_{u = u^{\tau}},}
    \end{empheq}
where the last line is understood in the formal functional-derivative sense on the control space. With this interpretation, the variational update induced by the expected lean adjoint matching loss coincides with that induced by the continuous-time MSA objective:
\begin{talign}
    \frac{\dd}{\dd \tau}u^{\tau} = - \frac{\dd}{\dd u} \mathbb{E}\big[ \mathcal L_{\mathrm{AM}}(u;X^{\bar u}) \big] \rvert_{u = u^{\tau}}.
\end{talign}
\end{theorem}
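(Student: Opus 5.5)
The plan is to compute both functional derivatives explicitly and show that they agree. The key observation is that in both objectives the trajectory measure and the adjoint process are frozen: in $\mathcal L_{\mathrm{AM}}$ by the stopgrad, and in the MSA objective because $X^\tau$ and $p^\tau$ are computed under $u^\tau$ and held fixed while differentiating in $u$. Both derivatives therefore reduce to the same pointwise integrand, with $\tilde a$ and $p^\tau$ entering only through their conditional expectations given $X_t=x$. The argument then comes down to showing that these two conditional expectations coincide.

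\textbf{Step 1: differentiate each objective.} Perturb $u\mapsto u+\epsilon\,\delta u$ with the trajectory law fixed. Differentiating under the expectation gives
\[
\frac{\dd}{\dd\epsilon}\Big|_{\epsilon=0}\EE\Big[\int_0^T \tilde H(X_t,t;u+\epsilon\delta u,P_t)\,\dd t\Big]=\EE\Big[\int_0^T \big\langle \nabla_2 f(X_t,u,t)+\nabla_2 b(X_t,u,t)^\top P_t,\ \delta u(X_t,t)\big\rangle\,\dd t\Big],
\]
where $P_t$ is either $\tilde a(t;X^{\bar u})$ or $p^\tau_t$. Conditioning on $X_t$ and writing $\rho_t$ for the marginal density of $X_t^{u^\tau}$, the functional gradient at $(x,t)$ is
\[
\rho_t(x)\big(\nabla_2 f(x,u,t)+\nabla_2 b(x,u,t)^\top \EE[P_t\mid X_t=x]\big).
\]
Since $\bar u = u^\tau$, both objectives are evaluated on the same process $X^\tau$, so the $\rho_t$ factors agree.

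\textbf{Step 2: match the conditional means.} I would show
\[
\EE[\tilde a(t;X^\tau)\mid X^\tau_t=x]=\EE[p^\tau_t\mid X^\tau_t=x].
\]
\begin{itemize}
\item The BSDE \eqref{eq:smp_p_connecting} and the lean ODE \eqref{eq:general_adjoint} share the same linear drift in $p$, with coefficients $\nabla_1 b$ and $\nabla_1 f$, and the same terminal condition $\nabla_x g(X_T)$. They differ only by the martingale term $q^\tau\,\dd B$.
\item Set $D_t=\tilde a_t-p^\tau_t$. It satisfies the linear backward equation $\dd D_t=-\nabla_1 b^\top D_t\,\dd t-q^\tau_t\,\dd B_t$ with $D_T=0$.
\item Introduce the resolvent $\Phi(s,t)$ of $\dot\Phi=-\nabla_1 b^\top\Phi$ along the path. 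Variation of constants gives $D_t=\int_t^T\Phi(s,t)^{-\top}\cdots q_s\,\dd B_s$, which I would carry out concretely.
\item The pathwise resolvent depends on future Brownian increments, so the integrand is not adapted, and I cannot simply say the stochastic integral has zero mean.
\end{itemize}

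\textbf{Alternative route for Step 2.} If the resolvent argument stalls, I would instead identify both conditional means with a PDE solution. By the BSDE's uniqueness and Markov structure, $p^\tau_t=\phi(X_t,t)$, where $\phi$ solves the linear backward PDE obtained from Itô's formula:
\[
\partial_t\phi+\mathcal L^{u}\phi+\nabla_1 b^\top\phi+\nabla_1 f=0,\qquad \phi(\cdot,T)=\nabla g.
\]
Then I would show that $\psi(x,t):=\EE[\tilde a_t\mid X_t=x]$ solves the same PDE via a Feynman–Kac representation:
\begin{itemize}
\item $\tilde a_t=\Phi(T,t)^\top\nabla g(X_T)+\int_t^T\Phi(s,t)^\top\nabla_1 f\,\dd s$, which is a functional of the path after time $t$.
\item By the Markov property its conditional expectation depends only on $X_t$.
\item Feynman–Kac for linear systems with potential $\nabla_1 b^\top$ shows that this conditional expectation solves the PDE above.
\end{itemize}
Uniqueness of the linear PDE then gives $\phi=\psi$.

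\textbf{Main obstacle.} The difficulty is exactly this identification: showing that the pathwise ODE solution, which is not adapted, has the same conditional mean as the adapted BSDE solution. I would handle it with the Feynman–Kac/Markov argument above, under the regularity assumptions stated in Section~\ref{sec:main_results}.

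Substituting the identity into the gradients from Step 1 shows the two functional gradients coincide, which yields the claimed identity of the $\tau$-flows.
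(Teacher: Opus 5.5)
Your proposal is correct, and Step~1 is the same reduction the paper makes. The paper phrases it as equality of the two expected objectives as functionals of $u$, using that $\tilde H$ is affine in $p$ together with the tower property. Step~2 differs. The paper never uses a PDE. It conditions the integral forms of the lean ODE and of the BSDE on $X^\tau_t$, using the Markov property for $\tilde a$ and the zero conditional mean of $\int q^\tau\,\dd B$ for $p^\tau$. It then notes that both conditional means solve the same linear state-conditioned integral equation, and concludes with a backward-Gr\"onwall uniqueness lemma. Your fallback instead identifies both conditional means with a classical $C^{1,2}$ solution of a linear parabolic system whose coefficients are built from $u^\tau$. This works, but it demands more regularity of the control than either of the other routes.

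The ``main obstacle'' you flag does not actually arise, and the resolvent route you set aside gives the shortest proof. Let $\partial_s\Phi(s,t)=\nabla_1 b(X^\tau_s,u^\tau(X^\tau_s,s),s)\,\Phi(s,t)$ with $\Phi(t,t)=I$. Then $\tilde a(t;X^\tau)=\Phi(T,t)^\top\nabla g(X^\tau_T)+\int_t^T\Phi(s,t)^\top\nabla_1 f\,\dd s$. It\^o's product rule for $s\mapsto\Phi(s,t)^\top p^\tau_s$ on $[t,T]$ gives, pathwise,
\[
\tilde a(t;X^\tau)-p^\tau_t=\int_t^T\Phi(s,t)^\top q^\tau_s\,\dd B_s .
\]
The integrand at time $s$ depends only on the path over $[t,s]$ and on $q^\tau_s$, so it is $\mathcal F_s$-measurable. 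Adaptedness is required in the integration variable $s$; that $\tilde a(t;X^\tau)$ as a whole looks into the future of $t$ is irrelevant. Under square-integrability of $\Phi(s,t)^\top q^\tau_s$, this yields $p^\tau_t=\EE[\tilde a(t;X^\tau)\mid\mathcal F_t]$. The tower property then finishes the argument, with no Markov property, no PDE, and no uniqueness lemma.

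This is the general form of the paper's second (Feynman--Kac) argument, which the paper states only in the path-integral case. The Feynman--Kac formula in your fallback is itself proved through the same kind of It\^o integral, $\int_t^T\Phi(s,t)^\top\nabla\phi\,\sigma\,\dd B_s$, so both your routes rest on this same adaptedness fact. The direct route also avoids the paper's uniqueness lemma. That lemma's Gr\"onwall step, as written, bounds $\EE[\|\mathcal A\|\,\|Y^1_s-Y^2_s\|]$ by $(\EE\|\mathcal A\|^2)^{1/2}\,\EE\|Y^1_s-Y^2_s\|$, which is only justified when $\nabla_1 b$ is bounded.
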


In other words, each gradient step of the lean AM loss can be viewed as an implementable surrogate for one idealized MSA improvement step: it simulates the forward SDE, uses the corresponding adjoint BSDE $p_t^{\tau}$ as the conceptual SMP reference object, and improves the control by descending the simplified Hamiltonian $\tilde{H}$ rather than by carrying out the full control-space minimization in the original MSA.
The key observation, detailed in the proof, is that although $p_t^{\tau}$ in \eqref{eq:smp_p_connecting} is coupled to the unknown $q_t^{\tau}$ through the adjoint BSDE, the lean adjoint $\tilde{a}$ still yields an unbiased estimator of the Hamiltonian gradient with respect to the control, and the gradient flow therefore recovers the MSA control-improvement direction without ever requiring access to $q_t$, resolving the obstacle identified at the end of \Cref{sec:smp}.

This result can be viewed as the stochastic counterpart of an observation by \citet{li2018maximum} in the deterministic setting, where they show that gradient descent with backpropagation is equivalent to a discrete-time MSA with the Hamiltonian maximization replaced by a single gradient step. \Cref{thm:am_msa} extends this correspondence to the stochastic regime, where the lean adjoint bypasses the martingale term $q_t$ in the SMP adjoint BSDE that would otherwise need to be solved explicitly. More broadly, this suggests that the lean adjoint construction may serve as a practical alternative to full BSDE solvers, opening a route to MSA-type algorithms for stochastic control beyond generative modeling.

\begin{remark}[Trust-region perspective]
Interpreting the lean adjoint matching as a continuous-time version of MSA is useful beyond the proof itself. It places the lean adjoint matching within a well-studied control-theoretic framework and suggests concrete directions for more stable algorithm design. In the deterministic setting, \citet{li2018maximum} show that the basic MSA can diverge when the Hamiltonian maximization step induces large ``feasibility errors'' in the forward--backward dynamics, and propose an extended MSA that regularizes the control update to prevent this. This analysis carries a clear message for the lean adjoint matching in the stochastic setting: updates should be moderated when the iterate is still far from a self-consistent forward--backward pair. This viewpoint naturally motivates trust-region style variants of the lean adjoint matching, where each step is restricted to stay within a neighborhood of the current iterate, improving robustness and preventing divergence, as recently explored in~\cite{blessing2025trust}. Developing analogous convergence guarantees for the lean adjoint matching, informed by the error estimates of \citet{li2018maximum}, is an interesting direction for future work.
\end{remark}

\section{Numerical Results}
\label{sec:numerics}

\rev_num{In this section, we numerically examine the general adjoint-matching framework developed above. Motivated by recent generative models based on non-Gaussian, state-dependent stochastic processes~\citep{richemond2022categorical,avdeyev2023dirichlet,shetty2025dale,kim2025diffusion,tang2026tweedie}, we consider two target-steering problems with geometric Brownian motion (GBM) as the common base process. Both use a density-ratio terminal cost, a common construction in stochastic-control formulations of sampling that expresses the desired terminal law relative to the uncontrolled law~\citep{berner2023optimal,havens2025adjoint,liu2025adjoint}.}

\rev_num{Within this shared construction, the first experiment is synthetic: it prescribes a three-component Gaussian-mixture target for a correlated GBM and studies performance across increasing ambient dimensions. The second experiment prescribes the distribution of a single MNIST digit in the latent space of a VAE, while decoded terminal samples provide a qualitative image-space assessment. Both settings admit a tractable exact solution. In both settings, the diffusion depends on the state but not on the control, isolating the state-dependent terms that distinguish the first-order adjoint in BAM from its lean AM specialization. Our central finding is that retaining these terms is necessary in this regime: BAM stays accurate, whereas the lean AM visibly degrades the terminal distribution and the learned control.}

\subsection{Shared controlled-GBM formulation}
\label{subsec:numerical_shared_gbm}

\rev_num{In both experiments, the controlled state is a positive GBM state $X_t \in \mathbb{R}_{>0}^d$. A Markov control $u: \mathbb{R}_{>0}^d\times[0,T]\to\mathbb{R}^d$ changes the componentwise log-growth rate, giving the controlled dynamics}
\begin{talign}
\label{eq:numerical_shared_gbm}
\rev_num{\dd X_t}
&\rev_num{=
\operatorname{Diag}(X_t)
\left(u(X_t,t)+\frac12\operatorname{diag}(D(t))\right)\dd t
+\operatorname{Diag}(X_t)S(t)\,\dd B_t,
\qquad D(t):=S(t)S(t)^\top.}
\end{talign}
\rev_num{Here, $\operatorname{Diag}(x)$ denotes the diagonal matrix with diagonal $x$, while $\operatorname{diag}(D)$ denotes the vector of diagonal entries of $D$.}
\rev_num{We minimize the path-integral control objective}
\begin{talign}
\label{eq:numerical_shared_objective}
\rev_num{
\mathcal J(u)
=
\EE\left[
\int_0^T \frac12 u(X_t,t)^\top R(t)u(X_t,t)\,\dd t
+g(X_T)
\right],
\qquad
R(t)=\lambda D(t)^{-1}.
}
\end{talign}
\rev_num{The diffusion in \eqref{eq:numerical_shared_gbm} is state-dependent:
$\sigma(x,t)=\operatorname{Diag}(x)S(t)$, so
$\sigma_{\cdot k}(x,t)=\operatorname{Diag}(x)S_{\cdot k}(t)$ and
$G_k^u(x,t)=\nabla_x\sigma_{\cdot k}(x,t)=\operatorname{Diag}(S_{\cdot k}(t))\neq0$.
Thus the first-order adjoint in \eqref{eq:general_basic_adjoint} retains the drift and diffusion terms involving $G_k^u$, including its stochastic component, whereas lean AM corresponds to the simplification $G_k^u=0$. Since $\sigma$ is independent of $u$, the Hessian term in the Hamiltonian is constant with respect to the control. The resulting BAM update is therefore first-order and directly implementable; no $d\times d$ second-order adjoint matrix is simulated in either experiment.}

\rev_num{The target-steering construction is shared by the two experiments. Both start from a fixed state. Let $\rho_T^0$ denote the uncontrolled terminal density of $X_T$, let $\rho_T$ be a prescribed target density on $\mathbb R_{>0}^d$, and let $\rho_T^*$ denote the terminal density induced by the optimal control. For the objective in \eqref{eq:numerical_shared_objective}, the path-integral identity~\citep{kappen2005path,domingoenrich2024stochastic} gives}
\begin{talign}
\rev_num{
\rho_T^*(x)
=
\frac{\rho_T^0(x)e^{-g(x)/\lambda}}
{\int \rho_T^0(\tilde x)e^{-g(\tilde x)/\lambda}\,\dd\tilde x}.
}
\end{talign}
\rev_num{We therefore choose the density-ratio terminal cost}
\begin{talign}
\label{eq:numerical_shared_density_ratio}
\rev_num{
g(x)
=
\lambda\left(\log \rho_T^0(x)-\log \rho_T(x)\right),
\qquad\text{which yields}\qquad
\rho_T^*(x)=\rho_T(x).
}
\end{talign}
\rev_num{For this choice, the desirability function $\Psi$ and exact optimal control are given by}
\begin{talign}
\label{eq:numerical_shared_desirability}
\rev_num{
\Psi(t,x)
:=
\EE^0\left[
\frac{\rho_T(X_T)}{\rho_T^0(X_T)}
\mid X_t=x
\right],
\qquad
u^*(x,t)
=
D(t)\operatorname{Diag}(x)\nabla_x\log\Psi(t,x),
}
\end{talign}
\rev_num{where the expectation is under the uncontrolled GBM. Its terminal law is lognormal, equivalently Gaussian after the log transform, and the target choices below make $\Psi$ and $u^*$ available in closed form. BAM and lean AM use the same simulated paths, policy class, discretization, and fitting procedure; they differ only in the first-order adjoint target. The shared derivation, precise adjoint equations, and implementation details are given in \Cref{app:numerical_experiments}.}

\subsection{High-dimensional correlated GBM}

\rev_num{We first consider a correlated GBM \eqref{eq:numerical_shared_gbm} with $S(t)\equiv S$. For the target construction, let $y=\log x$ and $z=(y_1,y_2)$. Panel (A) of \Cref{fig:highdim_gbm} displays terminal samples in this first two-dimensional log-coordinate plane. We prescribe the terminal marginal density of $z$ as the equally weighted three-component Gaussian mixture}
\begin{talign}
\label{eq:numerical_gbm_target}
\rev_num{
q_{T,z}(z)
=
\frac13\sum_{j=1}^3\mathcal N(z;\mu_j,\Sigma_j).
}
\end{talign}
\rev_num{The three stars in panel (A) mark the component means $\mu_1,\mu_2,\mu_3$. We extend this marginal to a full $d$-dimensional target by retaining the uncontrolled conditional law of the remaining log coordinates given $z$; the precise construction is given in \Cref{app:shared_numerical_gbm,app:highdim_gbm}. Although the density-ratio terminal cost then depends only on the two displayed coordinates, the off-diagonal correlations in $D$ make the exact optimal control nonzero in all $d$ coordinates. We vary $d\in\{2,5,10,20\}$ while holding the two-dimensional target fixed.}

\begin{figure}[t]
  \centering
  \includegraphics[width=0.9\textwidth]{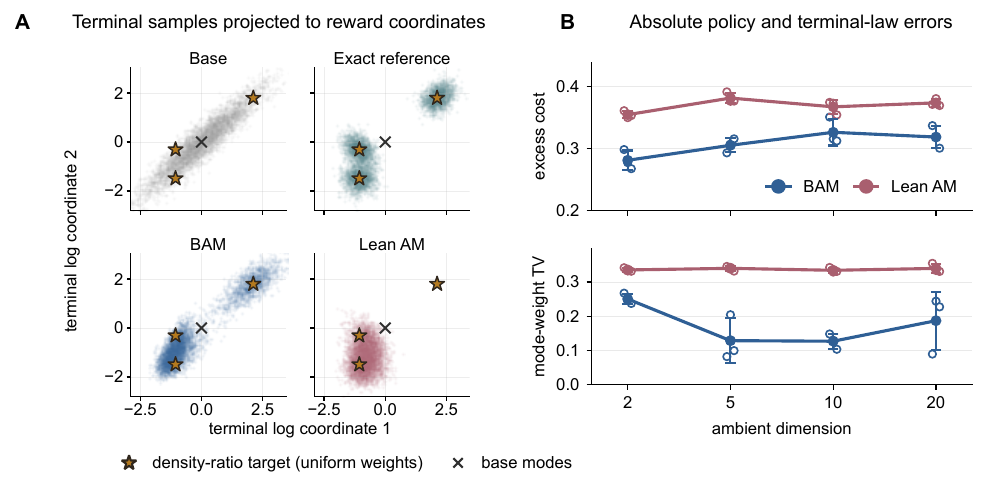}
  \caption[High-dimensional correlated GBM]{\rev_num{\textbf{High-dimensional correlated GBM with a prescribed three-mode terminal law.} \textbf{A:} Terminal samples for $d=20$, projected onto the two target log coordinates. Stars mark the three equally weighted target-component centers and the cross marks the mode of the uncontrolled terminal distribution. BAM retains all three target modes, whereas lean AM loses one mode and misallocates mass between the other two. \textbf{B:} Excess policy cost above the exact reference (top) and total-variation distance between the terminal mode weights and the direct-target reference weights (bottom) for $d\in\{2,5,10,20\}$. Open circles show three individual seeds; filled circles and error bars show mean $\pm$ standard deviation.}}
  \label{fig:highdim_gbm}
\end{figure}

\rev_num{Panel (A) of \Cref{fig:highdim_gbm} illustrates the terminal distributions for $d=20$: lean AM assigns essentially zero mass to one target component, whereas BAM retains nontrivial mass in all three. Lean AM loses the same target mode at every tested dimension. Panel (B) quantifies this discrepancy and shows that BAM attains both a more accurate terminal mode allocation and a lower policy cost throughout the dimension sweep. Detailed metrics are reported in \Cref{tab:highdim_gbm_metrics}.}

\subsection{MNIST VAE geometric latent bridge}

\rev_num{We next apply the same controlled GBM in a $16$-dimensional coordinate system derived from the latent space of a convolutional VAE for MNIST. A scaled, coordinatewise-whitened VAE latent code is used as the GBM log-state $y=\log x$. Every path starts from the fixed all-digit latent mean. The uncontrolled terminal law is therefore Gaussian in these whitened latent coordinates. Applying the inverse whitening map followed by the VAE decoder produces the mixed all-digit base samples shown in the first row of panel (A) of \Cref{fig:mnist_vae_digit5}. The experiment steers this latent distribution toward a target digit distribution. The precise base process is given in \Cref{app:mnist_vae_bridge}.}

\rev_num{We steer the base distribution toward one selected digit by prescribing the terminal density in the GBM log coordinates as an equally weighted Gaussian mixture}
\begin{talign}
\label{eq:numerical_mnist_target}
\rev_num{
q_T(y)
=
\frac1M\sum_{j=1}^M\mathcal N(y;c_j,\Sigma_{\mathrm{tgt}}),
}
\end{talign}
\rev_num{where the centers $c_j$ are encoded examples of that digit and $M=96$. We use digit $5$ in the main text and report results for additional single-digit targets in \Cref{app:mnist_additional_digits}. This is the same density-ratio problem as the synthetic experiment, with a higher-dimensional, data-derived mixture replacing \eqref{eq:numerical_gbm_target}. The primary evaluation remains in latent space, where terminal sliced-Wasserstein distance to $q_T$ and control error to $u^*$ are calibrated against known references. Decoded samples and LeNet feature distance provide complementary image-space diagnostics but are not part of the controlled objective.}

\begin{figure}[t]
  \centering
  \includegraphics[width=0.9\textwidth]{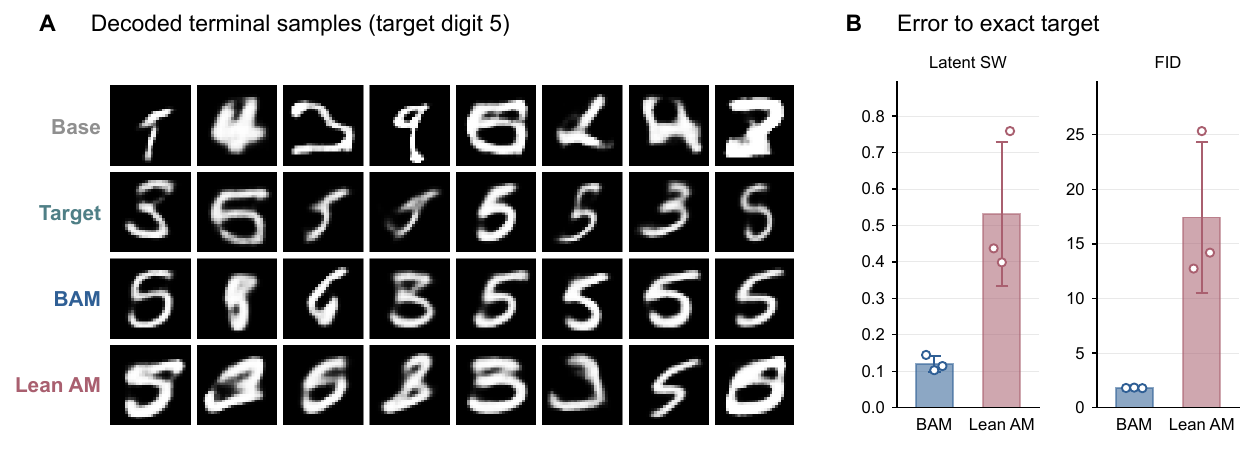}
  \caption[MNIST VAE geometric latent bridge]{\rev_num{\textbf{MNIST VAE geometric latent bridge for target digit $5$.} \textbf{A:} Randomly selected decoded terminal samples from the uncontrolled base distribution, the exact target distribution, BAM, and lean AM. \textbf{B:} Terminal sliced-Wasserstein distance in latent space and LeNet feature Fr\'echet distance (FID) to samples from the exact target. Bars and error bars show mean $\pm$ standard deviation over three seeds; open circles show individual seeds. Lower is better.}}
  \label{fig:mnist_vae_digit5}
\end{figure}

\rev_num{Panel (A) of \Cref{fig:mnist_vae_digit5} shows decoded samples from the uncontrolled base, exact target, BAM, and lean AM terminal laws. BAM produces more consistently recognizable digit-$5$ samples, whereas lean AM remains more dispersed. Panel (B) quantifies the comparison: BAM is closer than lean AM to the exact target in latent sliced-Wasserstein distance and has a lower LeNet-FID after decoding. Because the decoded samples and LeNet features also depend on the fixed VAE decoder and classifier, we treat these image-space comparisons as supporting diagnostics rather than the primary evidence. The comparison with the exact control, detailed metrics, and results for additional target digits are reported in \Cref{tab:mnist_vae_metrics,app:mnist_additional_digits}.}

\rev_num{Taken together, the experiments show the same effect for two prescribed Gaussian-mixture targets: an analytically transparent three-mode law and a data-derived law in VAE latent space, with decoding used only for image-space diagnostics. For state-dependent but control-independent diffusion, retaining the diffusion-Jacobian terms in the first-order adjoint improves the attained objective and terminal distribution in both settings, and substantially reduces control error in the MNIST experiment. These results provide numerical evidence for the correctness and practical relevance of the BAM construction.}

\section{Related work}

\rev{Beyond the original adjoint matching paper of \citet{domingoenrich2025adjoint}, which our results revisit and generalize, a rapidly growing family of methods builds on the lean adjoint matching loss in increasingly broad settings.

\paragraph{Sampling-oriented descendants of adjoint matching.}
\citet{havens2025adjoint} introduced \emph{Adjoint Sampling}, which uses lean adjoint matching together with a memoryless reference SDE to train scalable diffusion samplers for unnormalized target densities, with applications to molecular conformer generation. \citet{liu2025adjoint} extended this idea to Schr\"odinger-bridge dynamics in their \emph{Adjoint Schr\"odinger Bridge Sampler}, which trades the requirement of a memoryless prior for an additional learned base process. \citet{park2025functional} further extend the approach to infinite-dimensional state spaces (\emph{Functional Adjoint Sampler}), targeting sampling problems on function spaces. A central practical advantage of the lean formulation in these settings is that, in the time-dependent-diffusion / control-affine regime, the lean adjoint can be evaluated \emph{simulation-free} by renoising samples along a known reference SDE, which dramatically improves sample efficiency and scalability. Our analysis applies directly to those settings and gives a control-theoretic explanation for the tractability of the lean loss.

\paragraph{Discrete-state extensions.}
\citet{so2026discrete} propose \emph{Discrete Adjoint Matching}, extending the lean adjoint matching framework to discrete-state diffusion models, and \citet{guo2026discrete} develop a corresponding \emph{Discrete Adjoint Schr\"odinger Bridge Sampler}. Relatedly, \citet{zhu2025mdns} formulate masked diffusion neural samplers (\emph{MDNS}) as discrete stochastic optimal control problems and solve them via adjoint-based losses. The Hamiltonian viewpoint developed here suggests an analogous route beyond continuous state spaces, but the explicit SMP machinery we use relies on Brownian dynamics, gradients, It\^o calculus, and BSDEs. A rigorous discrete analogue would therefore require the corresponding discrete-state optimal-control machinery, and we leave this extension as future work.

\paragraph{Related sampling and fine-tuning frameworks.}
\citet{domingoenrich2026unified} provide a unified perspective on fine-tuning and sampling with diffusion and flow models, relating adjoint-matching-style losses to other stochastic control--based losses for generative models. \citet{ren2025driftlite} propose \emph{DriftLite}, a lightweight inference-time drift-control method for scaling diffusion models, which uses control ideas similar in spirit to ours but at inference rather than at training time. From a different direction, \citet{li2026qlearning} use adjoint matching as a building block for off-policy reinforcement learning in robotics (\emph{Q-learning with Adjoint Matching}), illustrating that the algorithmic ideas connecting adjoint matching to the SMP have impact well beyond generative modeling.

\paragraph{Stochastic control side.}
On the stochastic control side, our results are closest to the method of successive approximations for SOC \citep{chernousko1982method,li2018maximum,kerimkulov2021modified} and to learning-based approaches for solving SMP adjoint BSDEs \citep{e2017deep,han2018solving,han2024brief}. These methods aim to make maximum-principle-based control algorithms implementable despite the martingale integrand in the stochastic adjoint equation. \Cref{thm:am_msa} gives a complementary route in the time-dependent-diffusion setting: the lean AM gradient flow realizes the corresponding continuous-time MSA direction without explicitly solving for that martingale integrand.}

\section{Conclusion}
In this work, we revisited adjoint matching from the perspective of stochastic optimal control \rev{(SOC)} and clarified its structure at three levels. At the most general level, we introduced a Hamiltonian \rev{basic adjoint matching (}BAM\rev{)} objective whose expected first variation matches that of the original SOC problem. In the practically important case of purely time-dependent diffusion, this reduces to a lean \rev{adjoint matching (}AM\rev{)} objective that avoids second-order quantities while retaining the correct critical points under the corresponding verification assumptions. Finally, through the stochastic maximum principle \rev{(SMP)}, we showed how the lean adjoint matching can be interpreted as a principled continuous-time \rev{method of successive approximations (}MSA\rev{)}-type update that avoids explicit computation of the martingale term in the SMP adjoint \rev{backward stochastic differential equation (}BSDE\rev{)}.
This perspective provides a principled foundation for the adjoint matching framework and suggests a systematic route for designing scalable SOC-inspired algorithms in generative modeling. From a stochastic control standpoint, our results contribute new implementable objectives for general SOC whose critical points satisfy the \rev{Hamilton--Jacobi--Bellman (}HJB\rev{)} stationarity conditions, together with a practical pathway for MSA-type iterations that avoids solving BSDEs. We hope this encourages further exchange between the stochastic control and generative modeling communities.

\appendix

\bibliography{ref}

\appendix

\section{Numerical experiment details}
\label{app:numerical_experiments}

\rev_num{This appendix gives the common analytic reference, numerical updates, evaluation metrics, and experiment-specific settings used for \Cref{sec:numerics}. Both experiments use the controlled GBM in \eqref{eq:numerical_shared_gbm} and start from the deterministic source $Y_0=\mathbf 0$, equivalently $X_0=\mathbf 1$; they differ only in the prescribed terminal mixture and in the feature representation used for policy fitting.}

\subsection{Shared controlled-GBM and path-integral reference}
\label{app:shared_numerical_gbm}

\rev_num{For the analytic calculations, let $Y_t=\log X_t$ componentwise and write $\bar u(y,t)=u(e^y,t)$. It\^o's formula applied to \eqref{eq:numerical_shared_gbm} gives the additive-noise process}
\begin{talign}
\label{eq:app_numerical_log_gbm}
\rev_num{
\dd Y_t=\bar u(Y_t,t)\,\dd t+S(t)\,\dd B_t.
}
\end{talign}
\rev_num{The uncontrolled transition $Y_T\mid Y_t=y$ is therefore Gaussian with mean $y$ and covariance
$C_{t,T}:=\int_t^T D(s)\,\dd s$. Let $p_T^0$ and $q_T$ denote the uncontrolled and target terminal densities in the $Y$-coordinates. The corresponding $X$-space densities satisfy}
\begin{talign}
\label{eq:app_numerical_density_transform}
\rev_num{
\rho_T^0(x)
=
\frac{p_T^0(\log x)}{\prod_{i=1}^d x_i},
\qquad
\rho_T(x)
=
\frac{q_T(\log x)}{\prod_{i=1}^d x_i},
\qquad
\frac{\rho_T(x)}{\rho_T^0(x)}
=
\frac{q_T(\log x)}{p_T^0(\log x)}.
}
\end{talign}
\rev_num{Thus the density-ratio cost in \eqref{eq:numerical_shared_density_ratio} can equivalently be written as
$G(y):=g(e^y)=\lambda\{\log p_T^0(y)-\log q_T(y)\}$. The desirability function and exact control become}
\begin{talign}
\label{eq:app_numerical_log_desirability}
\rev_num{\psi(t,y)}
&\rev_num{=
\EE^0\left[
\frac{q_T(Y_T)}{p_T^0(Y_T)}
\mid Y_t=y
\right],}\\
\rev_num{\bar u^*(y,t)}
&\rev_num{=
D(t)\nabla_y\log\psi(t,y),
\qquad
u^*(x,t)=\bar u^*(\log x,t).}
\end{talign}

\rev_num{In both experiments, $p_T^0$ is Gaussian and the target is a finite Gaussian mixture
$q_T(y)=\sum_{j=1}^M\pi_j\mathcal N(y;c_j,\Sigma_j)$. Consequently,}
\begin{talign}
\label{eq:app_numerical_gaussian_integrals}
\rev_num{
\psi(t,y)
=
\sum_{j=1}^M\pi_j
\int_{\mathbb R^d}
\mathcal N(v;y,C_{t,T})
\frac{\mathcal N(v;c_j,\Sigma_j)}{p_T^0(v)}
\,\dd v.
}
\end{talign}
\rev_num{Each summand is an integral of the exponential of a quadratic form and is evaluated analytically by completing the square. Differentiating the resulting finite sum gives $\nabla_y\log\psi$ and hence the exact control in \eqref{eq:app_numerical_log_desirability}. Exact target samples are drawn directly from $q_T$, avoiding time-discretization error in the terminal reference distribution.}

\subsubsection{Implemented BAM and lean AM updates}

\rev_num{We use the uniform grid $t_n=n\Delta t$, with $\Delta t=T/N$, and simulate \eqref{eq:app_numerical_log_gbm} by}
\begin{talign}
\label{eq:app_numerical_forward_update}
\rev_num{
Y_{n+1}
=
Y_n+\bar u(Y_n,t_n)\Delta t+S(t_n)\Delta B_n,
\qquad
X_{n+1}=e^{Y_{n+1}}.
}
\end{talign}
\rev_num{Let $a_n$ denote the discrete-time analog of the continuous first-order adjoint $a_t$ in \eqref{eq:general_basic_adjoint}, evaluated at the grid points $t_n$, and define the log-coordinate adjoint $r_n:=X_n\odot a_n$, where $\odot$ denotes componentwise multiplication. The two parameterizations are exactly equivalent; we discretize in log coordinates because the diffusion $\sigma(x,t)=\operatorname{Diag}(x)S(t)$ is state-dependent in $X$ but state-independent after the log-transform, which yields a stable recursion without an explicit $G_k^u$ term. If
$J_n:=\nabla_y\bar u(Y_n,t_n)$, the backward BAM recursion used in the experiments is}
\begin{talign}
\label{eq:app_numerical_bam_recursion}
\rev_num{r_N}
&\rev_num{=
\nabla_y G(Y_N),}\\
\rev_num{r_n}
&\rev_num{=
r_{n+1}
+\Delta t\,J_n^\top\!\left(r_{n+1}+R(t_n)\bar u(Y_n,t_n)\right),
\qquad
a_n=X_n^{-1}\odot r_n.}
\end{talign}
\rev_num{This recursion is the GBM specialization of the full first-order adjoint \eqref{eq:general_basic_adjoint}; the state-dependence of $\sigma$ is fully retained, encoded compactly through the log-coordinate drift $\bar u$ rather than through the explicit $G_k^u$ terms of the $X$-coordinate form. Lean AM instead targets the lean adjoint \eqref{eq:lean_adjoint_background} of \citet{domingoenrich2025adjoint}, which is the $G_k^u\equiv 0$ simplification of \eqref{eq:general_basic_adjoint} appropriate when $\sigma(x,u,t):=\sigma(t)$. It discretizes the lean adjoint directly in the $X$-coordinates, giving}
\begin{talign}
\label{eq:app_numerical_lean_recursion}
\rev_num{a_N}
&\rev_num{=
\nabla_x g(X_N),}\\
\rev_num{a_n}
&\rev_num{=
a_{n+1}\odot
\exp\!\left[
\left(
\bar u(Y_n,t_n)+\frac12\operatorname{diag}(D(t_n))
\right)\Delta t
\right],}
\end{talign}
\rev_num{where the exponential is componentwise. The BAM and lean AM recursions, \eqref{eq:app_numerical_bam_recursion} and \eqref{eq:app_numerical_lean_recursion}, use the same forward simulator but target two distinct adjoints: BAM the full first-order adjoint \eqref{eq:general_basic_adjoint}, lean AM the lean adjoint \eqref{eq:lean_adjoint_background}. Since the diffusion is independent of the control, the second-order Hamiltonian term does not affect the control update, and neither method propagates the general $d\times d$ second-order adjoint matrix.}

\rev_num{Two technical points are worth noting. First, the continuous adjoint \eqref{eq:general_basic_adjoint} carries a $\dd B_t$ term while the discrete recursion \eqref{eq:app_numerical_bam_recursion} does not. For GBM, It\^o's product formula applied to $r_t=X_t\odot a_t$ gives, componentwise, a martingale coefficient
$a_t^{(i)}X_t^{(i)}S_{ik}(t)-X_t^{(i)}a_t^{(i)}S_{ik}(t)=0$
for each Brownian coordinate $k$. Thus $r_t$ still depends on the realized forward trajectory, but it has no martingale increment once that trajectory is fixed, and we discretize this pathwise finite-variation equation directly. Second, the state-coordinate representation must be separated from the adjoint approximation. Rewriting the lean recursion for the $X$-state-coordinate adjoint $a_n$ in terms of $r_n=X_n\odot a_n$ gives the equivalent update}
\begin{equation*}
\rev_num{r_n
=
r_{n+1}\odot
\exp\!\left[
\frac12\operatorname{diag}(D(t_n))\Delta t
-S(t_n)\Delta B_n
\right],}
\end{equation*}
\rev_num{which still contains the forward noise increment. This is different from applying the lean simplification after the log transform: in $Y$-coordinates the diffusion is $\bar\sigma(y,t)=S(t)$, so $\nabla_y\bar\sigma_{\cdot k}\equiv0$, and setting $G_k^u\equiv0$ there would recover the full log-coordinate adjoint, namely the BAM recursion \eqref{eq:app_numerical_bam_recursion}. We therefore present BAM in the log-coordinate adjoint representation natural for the full adjoint and lean AM through the $X$-state-coordinate adjoint used by \citet{domingoenrich2025adjoint}.}

\rev_num{For either adjoint, Hamiltonian stationarity gives the pathwise policy target}
\begin{talign}
\label{eq:app_numerical_policy_target}
\rev_num{
\widehat u_n
=
-R(t_n)^{-1}r_n.
}
\end{talign}
\rev_num{For lean AM, $r_n$ in \eqref{eq:app_numerical_policy_target} denotes $X_n\odot a_n$ after computing the $X$-coordinate recursion \eqref{eq:app_numerical_lean_recursion}.}
\rev_num{At each time step, the policy is linear in a feature vector $\phi(y)$ containing a constant, all linear coordinates of $y$, and Gaussian radial basis function (RBF) features. We fit $\widehat u_n$ by ridge regression and damp the fitted update:}
\begin{talign}
\label{eq:app_numerical_policy_fit}
\rev_num{\widehat W_n}
&\rev_num{=
\argmin_W
\sum_{\ell=1}^{n_{\mathrm{train}}}
\left\|
W^\top\phi(Y_n^{(\ell)})-\widehat u_n^{(\ell)}
\right\|_2^2
+\gamma\|W\|_{\mathrm F}^2,}\\
\rev_num{W_n^{k+1}}
&\rev_num{=
(1-\eta)W_n^{k}+\eta\widehat W_n,}
\end{talign}
\rev_num{where the superscript $k$ is again the index of outer loop iteration in MSA, as a stable implementation of adjoint matching. BAM and lean AM use the same feature class, path budget, simulation seeds, and final-iterate evaluation; only the adjoint they target differs. Before each regression in both experiments, training trajectories with nonfinite values, nonpositive states, or exceptionally large states or adjoints are excluded; in the retained runs this filter acts only as a rare-outlier safeguard.}

\subsubsection{Metrics}

\rev_num{For a fitted policy $\widehat u$, the excess policy cost is
$\mathcal J(\widehat u)-\mathcal J(u^*)$. Relative control error is evaluated along paths generated by $\widehat u$ and discretized on the same time grid:}
\begin{talign}
\label{eq:app_numerical_control_error}
\rev_num{
\operatorname{Err}_{u}
=
\left(
\frac{\sum_{n,\ell}
\|\widehat u(X_n^{(\ell)},t_n)-u^*(X_n^{(\ell)},t_n)\|_2^2}
{\sum_{n,\ell}\|u^*(X_n^{(\ell)},t_n)\|_2^2}
\right)^{1/2}.
}
\end{talign}
\rev_num{For the synthetic experiment, each terminal sample is assigned to its nearest target center and the mode-weight error is the total-variation distance between the resulting empirical weights and those obtained from direct target samples. For MNIST, terminal sliced-Wasserstein distance is computed in the $16$-dimensional log-latent coordinates using $64$ random one-dimensional projections and direct samples from $q_T$. LeNet-FID is computed after decoding, using the frozen LeNet penultimate-layer features. All reported entries are means and sample standard deviations over three independent seeds; lower is better for every metric.}

\subsection{High-dimensional correlated GBM}
\label{app:highdim_gbm}

\rev_num{The synthetic experiment uses constant diffusion $S(t)\equiv S$, and partitions log coordinates into active indices $A=\{1,2\}$ and inactive indices $I=\{3,\ldots,d\}$. The uncontrolled terminal law is then $p_T^0=\mathcal N(\mathbf 0,TSS^\top)$, and the prescribed target extending the active marginal mixture in \eqref{eq:numerical_gbm_target} to all $d$ coordinates is}
\begin{talign}
\label{eq:app_highdim_target}
\rev_num{
q_T(y)
=
\left[
\frac13\sum_{j=1}^3\mathcal N(y_A;\mu_j,\Sigma_j)
\right]
p_T^0(y_I\mid y_A).
}
\end{talign}
\rev_num{with three equally weighted active components placed around the origin in the active subspace (\Cref{fig:highdim_gbm}, panel A).}

\rev_num{$S$ is a fixed structured matrix with a $2\times 2$ active block and inactive couplings into the active block. The retained runs use $d\in\{2,5,10,20\}$, $T=1$, $\lambda=0.3$, $N=60$ time steps, $120$ policy updates, $800$ training paths per update, $5000$ evaluation paths, damping $\eta=0.01$, and ridge parameter $\gamma=3\times10^{-4}$. The RBF features are centered at the uncontrolled active mean and the three active target centers with Gaussian bandwidth $0.85$, while the linear block includes all $d$ log coordinates. Panel (A) of \Cref{fig:highdim_gbm} shows a representative seed at $d=20$.}

\begin{table}[h!]
\centering
\caption[High-dimensional correlated-GBM metrics]{\rev_num{High-dimensional correlated-GBM metrics. Entries are mean $\pm$ standard deviation over three seeds. TV denotes the total-variation distance between terminal mode weights and the direct-target reference weights.}}
\label{tab:highdim_gbm_metrics}
\small
\begin{tabular}{c cc cc cc}
\toprule
\rev_num{$d$}
& \multicolumn{2}{c}{\rev_num{Excess policy cost $\downarrow$}}
& \multicolumn{2}{c}{\rev_num{Mode-weight TV $\downarrow$}}
& \multicolumn{2}{c}{\rev_num{Relative control error $\downarrow$}}\\
\cmidrule(lr){2-3}\cmidrule(lr){4-5}\cmidrule(lr){6-7}
& \rev_num{BAM} & \rev_num{lean AM}
& \rev_num{BAM} & \rev_num{lean AM}
& \rev_num{BAM} & \rev_num{lean AM}\\
\midrule
\rev_num{$2$} & \rev_num{$\mathbf{0.281\pm0.016}$} & \rev_num{$0.355\pm0.006$} & \rev_num{$\mathbf{0.250\pm0.015}$} & \rev_num{$0.335\pm0.006$} & \rev_num{$\mathbf{0.718\pm0.015}$} & \rev_num{$0.730\pm0.006$}\\
\rev_num{$5$} & \rev_num{$\mathbf{0.305\pm0.011}$} & \rev_num{$0.382\pm0.009$} & \rev_num{$\mathbf{0.129\pm0.066}$} & \rev_num{$0.340\pm0.007$} & \rev_num{$\mathbf{0.719\pm0.043}$} & \rev_num{$0.736\pm0.004$}\\
\rev_num{$10$} & \rev_num{$\mathbf{0.326\pm0.021}$} & \rev_num{$0.367\pm0.011$} & \rev_num{$\mathbf{0.127\pm0.022}$} & \rev_num{$0.334\pm0.007$} & \rev_num{$0.741\pm0.010$} & \rev_num{$\mathbf{0.738\pm0.008}$}\\
\rev_num{$20$} & \rev_num{$\mathbf{0.319\pm0.018}$} & \rev_num{$0.374\pm0.006$} & \rev_num{$\mathbf{0.187\pm0.085}$} & \rev_num{$0.339\pm0.013$} & \rev_num{$0.754\pm0.019$} & \rev_num{$\mathbf{0.747\pm0.003}$}\\
\bottomrule
\end{tabular}
\end{table}

\subsection{MNIST VAE geometric latent bridge}
\label{app:mnist_vae_bridge}

\rev_num{A convolutional VAE trained on $30000$ MNIST images maps each image to a latent mean $\zeta\in\mathbb R^{16}$. Let $\mu_\zeta$ and $s_\zeta$ be the coordinatewise mean and standard deviation of $\zeta$ over the encoded MNIST bank. We define a GBM state $x\in\mathbb R^{16}_{>0}$ from $\zeta$ by}
\begin{talign}
\label{eq:app_mnist_coordinate}
\rev_num{
y=\alpha(\zeta-\mu_\zeta)\oslash s_\zeta,
\qquad
\alpha=0.5,
\qquad
x=e^y,
}
\end{talign}
\rev_num{where $\alpha$ keeps $x$ within a numerically moderate range and $y=\log x$ is the auxiliary log-coordinate as in \eqref{eq:app_numerical_log_gbm}.}

\rev_num{The initial state is the all-digit latent mean $\mu_\zeta$, which \eqref{eq:app_mnist_coordinate} maps to $y=\mathbf 0$ and $X_0=\mathbf 1$. The diffusion has the form $S(t)=s(t)S_0$, where the scalar schedule $s(t)=0.01+1.99t^{3/2}$ ramps the noise level over time and $S_0$ is a fixed lower-triangular matrix with correlated off-diagonal entries. The uncontrolled terminal log law is therefore $p_T^0=\mathcal N(\mathbf 0,\int_0^T S(t)S(t)^\top\,\dd t)$.}

\rev_num{For a selected target digit, we encode $M=96$ MNIST examples of that digit, map them through \eqref{eq:app_mnist_coordinate} to obtain centers $c_j\in\mathbb R^{16}$ in the $y$-coordinate, and prescribe the Gaussian-mixture target}
\begin{talign}
\label{eq:app_mnist_target}
\rev_num{q_T(y)}
&\rev_num{=
\frac1M\sum_{j=1}^M
\mathcal N(y;c_j,\Sigma_{\mathrm{tgt}}),}\\
\rev_num{\Sigma_{\mathrm{tgt}}}
&\rev_num{=
h^2\operatorname{Diag}
\left(
\max\{\widehat{\operatorname{Var}}_{\mathrm{target}}(y_i),\,0.05^2\}
\right)_{i=1}^{16},
\qquad h=0.60.}
\end{talign}
\rev_num{The mixture covariance follows the coordinatewise spread of the selected digit rather than imposing a common isotropic scale, and the density-ratio terminal cost admits the same closed-form reference \eqref{eq:app_numerical_gaussian_integrals} as the synthetic experiment. Terminal $X_T$ samples are inverted back to $\zeta_T$ via \eqref{eq:app_mnist_coordinate} and decoded by the VAE for image-space diagnostics.}

\rev_num{The retained runs use $T=1$, $\lambda=0.5$, $N=40$ time steps, $800$ training paths, $1000$ evaluation paths, $60$ policy updates, and ridge parameter $10^{-6}$. The RBF policy uses $64$ sampled feature centers together with the $96$ target centers, for $160$ RBF centers in total, with Gaussian bandwidth $2.2$. BAM uses damping $0.05$ and lean AM uses damping $0.007$.}

\rev_num{The latent sliced-Wasserstein and relative-control errors are the primary diagnostics because they compare directly with the known target and exact control; decoded samples and LeNet-FID are supporting, since they also reflect the fixed VAE decoder and classifier, neither of which appears in the control objective.}

\subsection{Additional MNIST target digits}
\label{app:mnist_additional_digits}

\rev_num{Target digits $7$ and $9$ provide additional single-digit steering examples under the same setup as digit $5$. Only the encoded examples used to construct the target mixture in \eqref{eq:app_mnist_target} change with the target digit; all other settings, including the initial state and base uncontrolled process, are reused.}

\begin{table}[h!]
\centering
\caption[MNIST VAE latent-bridge metrics]{\rev_num{MNIST VAE latent-bridge metrics. Entries are mean $\pm$ standard deviation over three seeds. SW is the terminal sliced-Wasserstein distance to the exact latent target; FID is computed from decoded samples using the frozen LeNet features.}}
\label{tab:mnist_vae_metrics}
\small
\begin{tabular}{c cc cc cc}
\toprule
\rev_num{Target digit}
& \multicolumn{2}{c}{\rev_num{Terminal SW $\downarrow$}}
& \multicolumn{2}{c}{\rev_num{Relative control error $\downarrow$}}
& \multicolumn{2}{c}{\rev_num{LeNet-FID $\downarrow$}}\\
\cmidrule(lr){2-3}\cmidrule(lr){4-5}\cmidrule(lr){6-7}
& \rev_num{BAM} & \rev_num{lean AM}
& \rev_num{BAM} & \rev_num{lean AM}
& \rev_num{BAM} & \rev_num{lean AM}\\
\midrule
\rev_num{$5$} & \rev_num{$\mathbf{0.121\pm0.022}$} & \rev_num{$0.532\pm0.198$} & \rev_num{$\mathbf{0.370\pm0.006}$} & \rev_num{$0.804\pm0.010$} & \rev_num{$\mathbf{1.810\pm0.028}$} & \rev_num{$17.423\pm6.887$}\\
\rev_num{$7$} & \rev_num{$\mathbf{0.107\pm0.002}$} & \rev_num{$0.461\pm0.088$} & \rev_num{$\mathbf{0.305\pm0.008}$} & \rev_num{$0.818\pm0.013$} & \rev_num{$\mathbf{1.327\pm0.142}$} & \rev_num{$8.605\pm5.728$}\\
\rev_num{$9$} & \rev_num{$\mathbf{0.104\pm0.020}$} & \rev_num{$0.442\pm0.100$} & \rev_num{$\mathbf{0.305\pm0.014}$} & \rev_num{$0.807\pm0.007$} & \rev_num{$\mathbf{3.670\pm1.013}$} & \rev_num{$13.613\pm0.471$}\\
\bottomrule
\end{tabular}
\end{table}

\begin{figure}[t]
  \centering
  \includegraphics[width=0.9\textwidth]{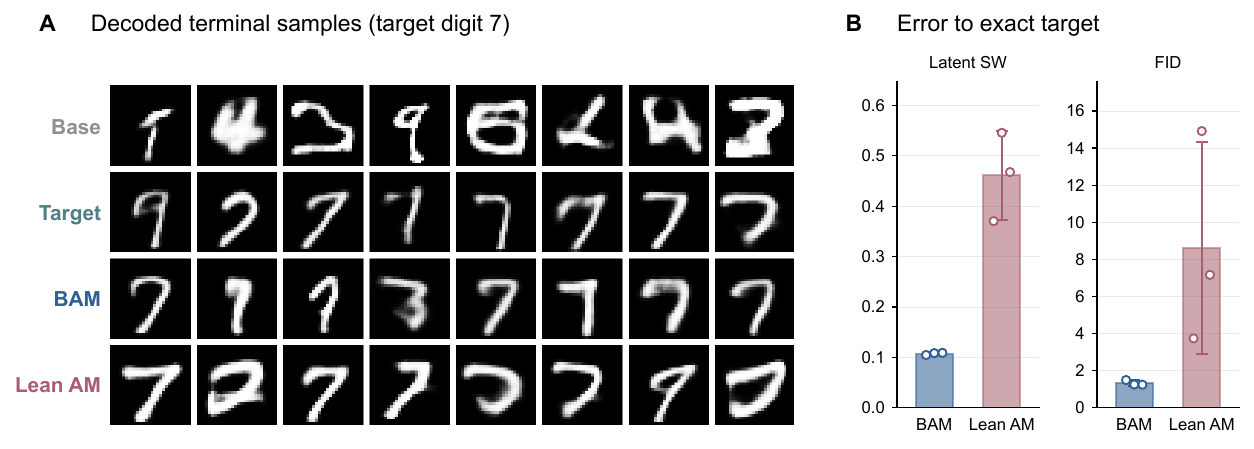}\\
  \vspace{0.4em}
  \includegraphics[width=0.9\textwidth]{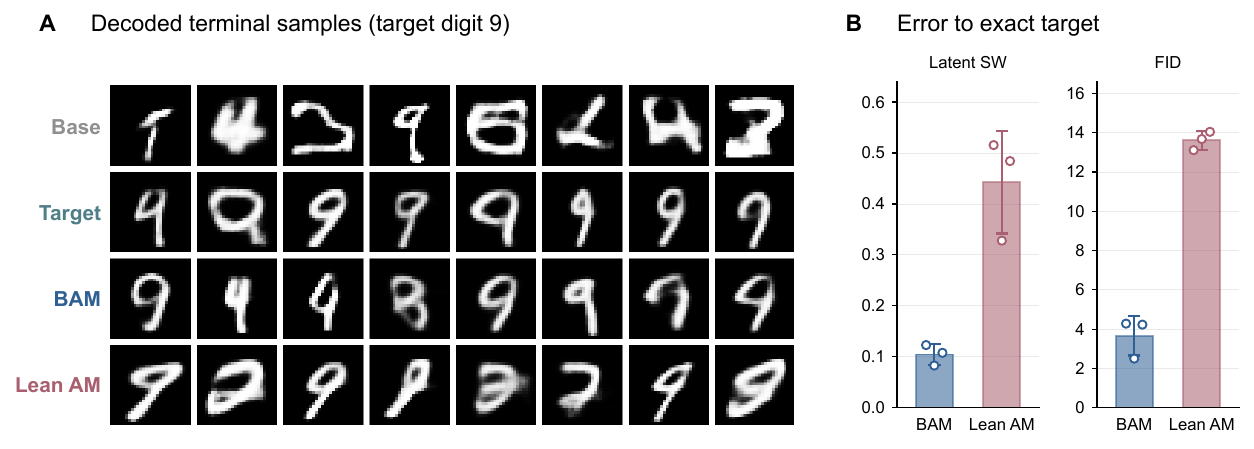}
  \caption[MNIST VAE bridge for target digits 7 and 9]{\rev_num{\textbf{MNIST VAE geometric latent bridge for target digits $7$ (top) and $9$ (bottom).} Panel layout and conventions match \Cref{fig:mnist_vae_digit5}; only the selected target digit and, consequently, the encoded examples that define the target mixture change between rows.}}
  \label{fig:mnist_vae_additional_digits}
\end{figure}

\rev_num{For all three target digits, BAM has lower terminal sliced-Wasserstein distance, relative control error, and LeNet-FID than lean AM; the corresponding values are reported in \Cref{tab:mnist_vae_metrics}. \Cref{fig:mnist_vae_additional_digits} shows that the same qualitative and quantitative pattern extends beyond the digit-$5$ example in the main text.}

\clearpage

\section{Proof of \Cref{thm:general_hamiltonian_basic_adjoint_matching}: Basic Adjoint Matching for General Hamiltonians}
\label{subsec:derivation_AM}

Throughout the proofs, we assume $b,\sigma,f$ are $C^1$ in $(x,u)$ and $g$ is $C^1$, all with derivatives of at most polynomial growth; that the controlled SDE~\eqref{eq:general_SDE} is well-posed for every admissible control and sufficiently small perturbations thereof, with finite moments of all relevant orders; and that these bounds justify differentiation under the expectation wherever invoked below. When individual results require additional regularity (e.g.\ $C^2$ for the second-order adjoint, or $J(u;\cdot,\cdot)\in C^{1,2}$ for the verification step), this is stated explicitly.

We write the original control objective as
\begin{talign}
\mathcal J(u)
:=\EE\Big[\int_0^T f(X_t^u,u(X_t^u,t),t)\,\dd t+g(X_T^u)\Big],
\qquad
\dd X_t^u=b(X_t^u,u(X_t^u,t),t)\,\dd t+\sigma(X_t^u,u(X_t^u,t),t)\,\dd B_t.
\end{talign}
Fix a Markov control $u$ and a perturbation $v$ (same dimension as $u$), and
consider $u^\eps:=u+\eps v$ with $\eps$ small. By
\Cref{lem:first_variation_adjoint_general},
the standard first-variation identity reads
\begin{talign}
\label{eq:first_variation_J}
\frac{\dd}{\dd\eps}\mathcal J(u^\eps)\Big|_{\eps=0}
=
\EE\Big[\int_0^T \Big\langle v(X_t^u,t),\, \nabla_u H\!\big(X_t^u,t;\,u(X_t^u,t),\,p_t^u,\,M_t^u\big)\Big\rangle\,\dd t\Big],
\end{talign}
where the Hamiltonian $H$ is defined in \eqref{eq:general_hamiltonian}, and where
\[
p_t^u=\nabla_x J(u;X_t^u,t), \qquad M_t^u=\nabla_x^2 J(u;X_t^u,t).
\]
Equivalently, one may use any unbiased representation of $p_t^u$ and $M_t^u$
coming from an adjoint/second-adjoint construction, provided it produces the
contractions $\langle b,p\rangle$ and $\Tr(\sigma\sigma^\top M)$ inside $H$.

\medskip

Now we consider the basic adjoint matching loss\cref{eq:general_adj_matching_loss}.
Then, for the same perturbation $v$, we obtain immediately that
\begin{talign} \label{eq:BAM_perturbation}
\frac{\dd}{\dd\eps}\EE[\mathcal L_{\mathrm{BAM}}(u^\eps;X^{\bar u})]\Big|_{\eps=0}
&=
\EE\Big[\int_0^T
\Big\langle v(X_t^{\bar u},t),\,
\nabla_u H\!\big(X_t^{\bar u},t;\,u(X_t^{\bar u},t),\,
a(t,X^{\bar u}), A(t,X^{\bar u})
\big)
\Big\rangle\,\dd t\Big].
\end{talign}

Observe that by the tower property of expectation,
\begin{talign}
\begin{split} \label{eq:BAM_tower}
&\EE\Big[\int_0^T
\Big\langle v(X_t^{\bar u},t),\,
\nabla_u H\!\big(X_t^{\bar u},t;\,u(X_t^{\bar u},t),\,
a(t,X^{\bar u}),\,A(t,X^{\bar u})
\big)
\Big\rangle\,\dd t\Big] \\ &=
\EE\Big[\int_0^T
\Big\langle v(X_t^{\bar u},t),\,
\nabla_2 f(X_t^{\bar u},u(X_t^{\bar u},t),t) + \nabla_2 b(X_t^{\bar u},u(X_t^{\bar u},t),t)^{\top} a(t,X^{\bar u}) \\ &\qquad\qquad\qquad\qquad \ + \tfrac12\nabla_u \Tr\!\big(\sigma(X_t^{\bar u},u,t)\sigma(X_t^{\bar u},u,t)^\top A(t,X^{\bar u}) \big)\rvert_{u = u(X_t^{\bar u},t)}
\Big\rangle\,\dd t\Big]
\\ &= \EE\Big[\int_0^T
\Big\langle v(X_t^{\bar u},t),\,
\nabla_2 f(X_t^{\bar u},u(X_t^{\bar u},t),t) + \nabla_2 b(X_t^{\bar u},u(X_t^{\bar u},t),t)^{\top} \EE[a(t,X^{\bar u})|X_t^{\bar u}] \\ &\qquad\qquad\qquad\qquad \ + \tfrac12\nabla_u \Tr\!\big(\sigma(X_t^{\bar u},u,t)\sigma(X_t^{\bar u},u,t)^\top \EE[A(t,X^{\bar u})|X_t^{\bar u}] \big)\rvert_{u = u(X_t^{\bar u},t)}
\Big\rangle\,\dd t\Big]
\\ &= \EE\Big[\int_0^T
\Big\langle v(X_t^{\bar u},t),\,
\nabla_2 f(X_t^{\bar u},u(X_t^{\bar u},t),t) + \nabla_2 b(X_t^{\bar u},u(X_t^{\bar u},t),t)^{\top} \nabla_x J(\bar u;X_t^{\bar u},t) \\ &\qquad\qquad\qquad\qquad \ + \tfrac12\nabla_u \Tr\!\big(\sigma(X_t^{\bar u},u,t)\sigma(X_t^{\bar u},u,t)^\top \nabla^2_x J(\bar u;X_t^{\bar u},t) \big)\rvert_{u = u(X_t^{\bar u},t)}
\Big\rangle\,\dd t\Big]
\\ &= \EE\Big[\int_0^T
\Big\langle v(X_t^{\bar u},t),\,
\nabla_u H\!\big(X_t^{\bar u},t;\,u(X_t^{\bar u},t),\,
p^{\bar u}_t,\,M^{\bar u}_t \big) \Big\rangle\,\dd t\Big]
\end{split}
\end{talign}
Here, the third equality follows from the fact that $\nabla_x J(\bar u;x,t) = \EE[a(t,X^{\bar u})|X_t^{\bar u}=x]$ and $\nabla^2_x J(\bar u;x,t) = \EE[A(t,X^{\bar u})|X_t^{\bar u}=x]$ by Lemma \ref{lem:adjoint_method_sdes_xdep_detailed} and Lemma \ref{lem:second_order_adjoint_xdep}, respectively.

\medskip

Finally, evaluate the above at $\bar u=u$ (the intended usage):
since the state process and adjoint objects coincide,
\begin{talign*}
(X^{\bar u},p^{\bar u},M^{\bar u})=(X^{u},p^{u},M^{u}),
\end{talign*}
we obtain the \emph{same} first variation as in \eqref{eq:first_variation_J}:
\begin{talign*}
\frac{\dd}{\dd\eps}\EE[\mathcal L_{\mathrm{BAM}}(u^\eps;X^{\bar u})]\Big|_{\eps=0,\ \bar u=\mathrm{stopgrad}(u)}
=
\frac{\dd}{\dd\eps}\mathcal J(u^\eps)\Big|_{\eps=0}.
\end{talign*}
This proves the claim that $u\mapsto \EE[\mathcal L_{\mathrm{BAM}}(u;X^{\bar u})]$ has the same
first variation as the original objective.

\medskip

Now let $\hat u$ be a critical point of $u\mapsto \EE[\mathcal L(u;X^{\bar u})]$
(with $\bar u=\mathrm{stopgrad}(u)$). Then the first variation above vanishes for all
perturbations $v$, hence the integrand must vanish (in the usual weak/pointwise
sense), yielding the stationarity condition
\begin{talign}
\nabla_u H\big(x,t;\hat u(x,t),\nabla_x J(\hat u;x,t),\nabla_x^2 J(\hat u;x,t)\big)=0
\quad\text{for a.e. }(x,t),
\end{talign}
which is \eqref{eq:hamiltonian_stationarity}. If, moreover, $u\mapsto
H(x,t;u,p,M)$ is convex, this stationarity is equivalent to the pointwise
minimizer condition
\begin{talign}
\hat u(x,t)\in\arg\min_{u}H\big(x,t;u,\nabla_x J(\hat u;x,t),\nabla_x^2 J(\hat u;x,t)\big).
\end{talign}
By Lemma~\ref{lem:general_f_hjb_verification} (the general $f$ verification step),
this implies that $J(\hat u;\cdot,\cdot)$ satisfies the HJB equation
\begin{talign}
\partial_t J+\inf_u H(x,t;u,\nabla_x J,\nabla_x^2 J)=0,\qquad J(\cdot,T)=g.
\end{talign}
If the HJB solution is unique, then $J(\hat u;\cdot,\cdot)=V$ and $\hat u=u^\star$
is the optimal control.

\begin{lemma}[First variation of $\mathcal J$ via adjoints (controlled diffusion)]
\label{lem:first_variation_adjoint_general}
Fix a Markov control $u$ and a perturbation $v$ of the same dimension, and set
$u^\eps:=u+\eps v$. Consider the controlled diffusion
\begin{talign*}
\dd X_t^\eps=b(X_t^\eps,u^\eps(X_t^\eps,t),t)\,\dd t+\sigma(X_t^\eps,u^\eps(X_t^\eps,t),t)\,\dd B_t,
\qquad X_0^\eps\sim P_0,
\end{talign*}
and the objective
\begin{talign*}
\mathcal J(u^\eps)
=
\EE\Big[\int_0^T f(X_t^\eps,u^\eps(X_t^\eps,t),t)\,\dd t+g(X_T^\eps)\Big].
\end{talign*}
Assume $b,\sigma,f$ are $C^1$ in $(x,u)$ with derivatives of at most polynomial growth, $g$ is $C^1$ with at most polynomial growth, and that the SDE is well-posed for all sufficiently small $\eps$ with moments ensuring the differentiations below are justified. Assume in addition that, for the frozen control $u$, the associated cost-to-go $J(u;\cdot,\cdot)$ is a classical $C^{1,2}$ solution of the backward equation used in the proof below. Then the directional derivative exists and is given by
\begin{talign}
\label{eq:first_variation_J_lemma}
\frac{\dd}{\dd\eps}\mathcal J(u^\eps)\Big|_{\eps=0}
=
\EE\Big[\int_0^T
\Big\langle v(X_t^u,t),\,\nabla_u H\!\big(X_t^u,t;\,u(X_t^u,t),\,p_t,\,M_t\big)\Big\rangle\,\dd t\Big],
\end{talign}
where $H$ is the Hamiltonian
\begin{talign*}
H(x,t;u,p,M)
=
f(x,u,t)+\langle b(x,u,t),p\rangle+\tfrac12\Tr\!\big(\sigma(x,u,t)\sigma(x,u,t)^\top M\big),
\end{talign*}
and where $(p_t,M_t)$ are the first- and second-order sensitivity objects
\begin{talign*}
p_t=\nabla_x J(u;X_t^u,t),\qquad M_t=\nabla_x^2 J(u;X_t^u,t).
\end{talign*}
Equivalently, one may replace $(p_t,M_t)$ by any unbiased adjoint representation
that yields the contractions $\langle b,p\rangle$ and $\Tr(\sigma\sigma^\top M)$
appearing in $H$.
\end{lemma}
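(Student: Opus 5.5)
The plan is to obtain the first variation by comparing two cost functionals through the Kolmogorov backward equation of the frozen control $u$. The Markov property gives $J^\eps(x,t):=J(u^\eps;x,t)$, and $\mathcal J(u^\eps)=\EE[J^\eps(X_0,0)]$ since $X_0^\eps\sim P_0$ does not depend on $\eps$.

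\textbf{Step 1: It\^o on the frozen cost-to-go.} Write $J:=J(u;\cdot,\cdot)$, assumed classical $C^{1,2}$. Then $J$ solves
\[
\partial_t J + H\big(x,t;u(x,t),\nabla_x J,\nabla_x^2 J\big)=0,\qquad J(\cdot,T)=g.
\]
Here the Hamiltonian is exactly $f+\langle b,\nabla J\rangle+\tfrac12\Tr(\sigma\sigma^\top\nabla^2 J)$ evaluated along $u$, i.e.\ the generator of $X^u$ plus running cost.

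Apply It\^o's formula to $J(X_t^\eps,t)$ along the perturbed process. Take expectations, using the polynomial growth and moment bounds to discard the martingale term, and use $J(\cdot,T)=g$. This yields the exact identity
\[
\mathcal J(u^\eps)-\mathcal J(u)=\EE\int_0^T\Big[H\big(X_t^\eps,t;u^\eps(X_t^\eps,t),\nabla_xJ,\nabla_x^2J\big)-H\big(X_t^\eps,t;u(X_t^\eps,t),\nabla_xJ,\nabla_x^2J\big)\Big]\dd t .
\]
In both terms $\nabla_x J$ and $\nabla_x^2 J$ are evaluated at $(X_t^\eps,t)$. The key point is that the PDE cancels all terms not involving the control difference.

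\textbf{Step 2: Differentiate in $\eps$.} The integrand equals $\eps\langle v,\nabla_uH\rangle+o(\eps)$, evaluated at $X_t^\eps$, by the mean value theorem in $u$, since $H$ is $C^1$ in $u$. Divide by $\eps$ and let $\eps\to0$. This requires $X_t^\eps\to X_t^u$ in $L^q$ (standard SDE stability under $C^1$ coefficients) together with dominated convergence via the polynomial growth bounds. The result is \eqref{eq:first_variation_J_lemma} with $p_t=\nabla_xJ(u;X_t^u,t)$ and $M_t=\nabla_x^2J(u;X_t^u,t)$.

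\textbf{Step 3: Unbiased representations.} The final sentence of the lemma follows by the same tower-property argument as in \eqref{eq:BAM_tower}. The integrand is affine in $(p,M)$ with $X_t^u$-measurable coefficients, so $p_t,M_t$ may be replaced by any estimators with those conditional means.

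The main obstacle is justifying Step 2: controlling the remainder uniformly in $\eps$, and checking that the local-martingale term in Step 1 is a true martingale. I would handle both with the assumed polynomial growth of $\nabla J,\nabla^2J$ and uniform-in-$\eps$ moment bounds on $X^\eps$.
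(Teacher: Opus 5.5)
Your proposal is correct and follows essentially the same route as the paper: you apply It\^o's formula to the frozen cost-to-go $J(u;\cdot,\cdot)$ along the perturbed process $X^\eps$, use the backward PDE to reduce the drift to the exact difference of Hamiltonians $H(\cdot;u^\eps)-H(\cdot;u)$, and then divide by $\eps$ and pass to the limit by dominated convergence using $X_t^\eps\to X_t^u$ in $L^p$. Your Step 3 (the tower property, using that $\nabla_u H$ is affine in $(p,M)$) treats the final ``equivalently'' clause more explicitly than the lemma's proof does; the paper only carries out that argument later, in the proof of the main theorem.
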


\begin{lemma}[Adjoint method for SDEs with state-dependent diffusion \citep{domingoenrich2024stochastic,li2020scalable,kidger2021neural}]
\label{lem:adjoint_method_sdes_xdep_detailed}
Let $(B_t)_{t\in[0,T]}$ be an $m$-dimensional Brownian motion and let
$X:\Omega\times[0,T]\to\R^d$ solve the uncontrolled SDE
\begin{talign}
\label{eq:adjoint_xdep_forward_sde}
    \dd X_t = b(X_t,t)\,\dd t + \sigma(X_t,t)\,\dd B_t,
    \qquad X_0=x,
\end{talign}
where $\sigma(x,t)\in\R^{d\times m}$.
Let $f:\R^d\times[0,T]\to\R$, $h:\R^d\times[0,T]\to\R^m$, and $g:\R^d\to\R$
be differentiable in $x$.
For $k\in\{1,\dots,m\}$, denote by $\sigma_{\cdot k}(x,t)\in\R^d$ the $k$-th column of $\sigma(x,t)$,
and let
\begin{talign}
G_k(x,t) \;\coloneqq\; \nabla_x \sigma_{\cdot k}(x,t)\in\R^{d\times d},
\qquad
\nabla_x h_k(x,t)\in\R^d.
\end{talign}

Define the adapted process $a:\Omega\times[0,T]\to\R^d$ as the solution of the backward SDE
\begin{talign}
\label{eq:adjoint_xdep_a_sde}
\begin{split}
    \dd a_t
    &=
    \Big(
        -\nabla_x b(X_t,t)^\top a_t
        - \nabla_x f(X_t,t)
        + \sum_{k=1}^m G_k(X_t,t)^\top\big(G_k(X_t,t)^\top a_t + \nabla_x h_k(X_t,t)\big)
    \Big)\,\dd t
    \\
    &\quad
    - \sum_{k=1}^m \big(G_k(X_t,t)^\top a_t + \nabla_x h_k(X_t,t)\big)\,\dd B_t^k,
    \qquad
    a_T = \nabla_x g(X_T).
\end{split}
\end{talign}
Then
\begin{talign}
\label{eq:adjoint_xdep_grad_identity_noexpectation}
    \nabla_{X_0} \big(
    \int_0^T f(X_t,t)\,\dd t
    + \int_0^T \langle h(X_t,t), \dd B_t\rangle
    + g(X_T) \big) &= a_0, \\
    \nabla_x \EE\!\left[
    \int_0^T f(X_t,t)\,\dd t
    + \int_0^T \langle h(X_t,t), \dd B_t\rangle
    + g(X_T)
    \,\Big|\, X_0=x
    \right]
    &= \EE[a_0],
\label{eq:adjoint_xdep_grad_identity}
\end{talign}
and
\begin{talign}
\begin{split}
\label{eq:adjoint_xdep_exp_identity}
&\nabla_x \EE\!\left[
    \exp\!\Big(
        -\int_0^T f(X_t,t)\,\dd t
        -\int_0^T \langle h(X_t,t), \dd B_t\rangle
        - g(X_T)
    \Big)
    \,\Big|\, X_0=x
\right]
\\ &=
- \EE\!\left[
    a_0
    \exp\!\Big(
        -\int_0^T f(X_t,t)\,\dd t
        -\int_0^T \langle h(X_t,t), \dd B_t\rangle
        - g(X_T)
    \Big)
\right].
\end{split}
\end{talign}
Moreover, if $b(x,t) = b_{\theta}(x,t)$ is differentiable with respect to the parameter $\theta$, and we write $X^{\theta}$ for the solution,
\begin{talign} \label{eq:theta}
&\nabla_{\theta} \big(
        \int_0^T f(X_t,t)\,\dd t
        + \int_0^T \langle h(X_t,t), \dd B_t\rangle
        + g(X_T) \big) = \int_0^T \big(\nabla_\theta b_\theta(X_t^\theta,t)\big)^\top a_t \, \dd t, \\
&\nabla_{\theta} \EE\!\left[
        \int_0^T f(X_t,t)\,\dd t
        + \int_0^T \langle h(X_t,t), \dd B_t\rangle
        + g(X_T)
    \,\Big|\, X_0=x
\right] = \EE\!\left[\int_0^T \big(\nabla_\theta b_\theta(X_t^\theta,t)\big)^\top a_t \, \dd t\right].
\label{eq:theta_expectation}
\end{talign}
\end{lemma}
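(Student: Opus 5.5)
The statement to prove is the last one displayed, \Cref{lem:adjoint_method_sdes_xdep_detailed}. The plan is a pathwise argument. Differentiate the solution map with respect to the initial condition, then use It\^o's product rule to show that $\langle a_t, \partial X_t\rangle$ plus the accumulated running terms is constant in time. This forces it to equal $a_0$ paired with the initial perturbation.

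\textbf{Step 1: the tangent process.} Let $Y_t := \nabla_{X_0} X_t\,\xi$ for a fixed direction $\xi\in\R^d$. Under the standing $C^1$ and polynomial-growth hypotheses, this is the solution of the linear variational SDE
\[
\dd Y_t=\nabla_x b(X_t,t)Y_t\,\dd t+\sum_k G_k(X_t,t)Y_t\,\dd B_t^k,\qquad Y_0=\xi .
\]
Differentiating the functional $\Phi:=\int_0^T f\,\dd t+\int_0^T\langle h,\dd B_t\rangle+g(X_T)$ in direction $\xi$ gives
\[
\int_0^T\langle\nabla_x f,Y_t\rangle\,\dd t+\sum_k\int_0^T\langle\nabla_x h_k,Y_t\rangle\,\dd B^k_t+\langle\nabla g(X_T),Y_T\rangle .
\]
Differentiating under the stochastic integral is justified by the same moment bounds.

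\textbf{Step 2: It\^o product rule.} Compute $\dd\langle a_t,Y_t\rangle$. The drift collects three contributions:
\begin{itemize}
\item from $\langle \dd a_t, Y_t\rangle$: $\langle -\nabla b^\top a-\nabla f+\sum_k G_k^\top(G_k^\top a+\nabla h_k),Y\rangle$;
\item from $\langle a_t,\dd Y_t\rangle$: $\langle a,\nabla b\,Y\rangle$;
\item from the cross-variation: $-\sum_k\langle G_k^\top a+\nabla h_k,G_kY\rangle$.
\end{itemize}
The $\nabla b$ terms cancel, and the $G_k^\top(\cdot)$ term cancels against the cross-variation, leaving $-\langle\nabla f,Y\rangle$. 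The martingale part is $\sum_k\big(\langle a,G_kY\rangle-\langle G_k^\top a+\nabla h_k,Y\rangle\big)\dd B^k=-\sum_k\langle\nabla h_k,Y\rangle\,\dd B^k$. This is exactly why the diffusion and drift corrections in \eqref{eq:adjoint_xdep_a_sde} were chosen.

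\textbf{Step 3: integrate.} Integrating from $0$ to $T$ and using $a_T=\nabla g(X_T)$ gives
\[
\langle\nabla g(X_T),Y_T\rangle-\langle a_0,\xi\rangle=-\int_0^T\langle\nabla f,Y\rangle\,\dd t-\sum_k\int_0^T\langle\nabla h_k,Y\rangle\,\dd B^k .
\]
This identifies the directional derivative of $\Phi$ as $\langle a_0,\xi\rangle$, which proves \eqref{eq:adjoint_xdep_grad_identity_noexpectation}.

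\textbf{Step 4: consequences.} Taking expectations and interchanging $\nabla_x$ with $\EE$ (dominated convergence, given polynomial growth and moments of $Y$) gives \eqref{eq:adjoint_xdep_grad_identity}. The exponential identity \eqref{eq:adjoint_xdep_exp_identity} follows from the chain rule, $\nabla e^{-\Phi}=-e^{-\Phi}\nabla\Phi=-e^{-\Phi}a_0$, together with the same interchange; this needs exponential moments, which I would add as an integrability assumption. For the $\theta$-statements \eqref{eq:theta}--\eqref{eq:theta_expectation}, use instead the parameter tangent $Z_t=\partial_\theta X_t$. Its SDE has the extra source term $\nabla_\theta b_\theta\,\dd t$ and $Z_0=0$. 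The same product-rule computation leaves an additional drift $\langle a_t,\nabla_\theta b_\theta\rangle$, giving $\int_0^T(\nabla_\theta b_\theta)^\top a_t\,\dd t$.

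\textbf{Main obstacle.} The algebra is routine. The delicate point is that $a_t$ is defined backward but with explicit diffusion coefficients, so it is not adapted in the forward filtration, and the product rule above requires interpretation. I would resolve this by defining $a$ through the forward linear flow. Let $\Psi_{s,t}$ be the fundamental solution of the linearized SDE, and set $a_t:=\nabla_{X_t}\Phi_t$, where $\Phi_t$ is the cost-to-go functional of the path after time $t$. This quantity is measurable with respect to the increments after $t$, and one verifies that it satisfies \eqref{eq:adjoint_xdep_a_sde} in the backward It\^o sense, via time reversal as in \citet{kidger2021neural,li2020scalable}. The identity $a_0=\nabla_{X_0}\Phi$ then holds by the flow property $\nabla_{X_0}\Phi=\Psi_{0,t}^\top\nabla_{X_t}\Phi_t+\ldots$. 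The Step 2 computation serves as the consistency check that this $a$ solves the stated equation.
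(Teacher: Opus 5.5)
Your Steps 1--4 are essentially the paper's proof. The paper uses the same tangent process and the same It\^o product rule for $\langle a_t,\xi_t\rangle$, where the $\nabla_x b$ terms cancel and the drift $\sum_k G_k^\top(G_k^\top a_t+\nabla_x h_k)$ cancels against the cross-variation. It then integrates with $a_T=\nabla_x g(X_T)$, and for the $\theta$-identities it uses the same parameter tangent with zero initial condition and forcing $\nabla_\theta b_\theta\,\vartheta$. The only cosmetic difference is that you verify the coefficients of $a$, whereas the paper derives them by coefficient matching. Your remark that the exponential identity needs exponential moments is a fair addition that the paper leaves implicit.

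The one thing to correct is your ``main obstacle'' paragraph. You are right that $a$ cannot be adapted to the forward filtration. The paper's wording is loose here: a forward-adapted process with a prescribed diffusion coefficient and a prescribed terminal value is overdetermined. Defining $a_t:=\nabla_{X_t}\Phi_t$ through the flow is also the right object. But this process does \emph{not} satisfy \eqref{eq:adjoint_xdep_a_sde} in the backward It\^o (right-endpoint) sense.

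Take $d=m=1$, $b=f=h=0$, $\sigma(x,t)=cx$ and $g(x)=x$. Then $G=c$, $a_t=\exp\big(c(B_T-B_t)-\tfrac{c^2}{2}(T-t)\big)$ and $a_0=X_T/X_0=\nabla_{X_0}X_T$. Expanding $a_{t_{i+1}}-a_{t_i}$ shows that with right-endpoint integrands $a$ has zero drift: $a_t=1+c\int_t^T a_r\,\hat{\dd}B_r$. If you instead read \eqref{eq:adjoint_xdep_a_sde}, which here is $\dd a_t=c^2a_t\,\dd t-ca_t\,\dd B_t$, in that sense, you get $\EE[a_0]=e^{-c^2T}$. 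The correct value is $\nabla_x\EE[X_T\mid X_0=x]=1$.

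The displayed equation holds when the $\dd B$ term is read with left-endpoint evaluation, i.e.\ as a Russo--Vallois forward integral of an anticipating integrand. Equivalently, you can rewrite it in Stratonovich form as in \citet{li2020scalable}. Switching to right-endpoint integrands removes exactly the drift $\sum_k G_k^\top(G_k^\top a_t+\nabla_x h_k)$, which is the covariation correction between the two conventions. In its place come terms involving derivatives of $G_k$ and $\nabla_x h_k$ along $\sigma_{\cdot k}$.

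In the left-endpoint calculus the usual product rule, with the cross-variation term you use in Step 2, is valid. That is what makes your computation, and the paper's formal one, rigorous. Note also that with your flow definition, $a_0=\nabla_{X_0}\Phi$ is a tautology, so all the content of the lemma sits in this convention check.
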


\begin{lemma}[Second-order adjoint / Hessian identity for state-dependent diffusion]
\label{lem:second_order_adjoint_xdep}
Let $X$ solve
\begin{talign*}
\dd X_t=b(X_t,t)\,\dd t+\sigma(X_t,t)\,\dd B_t,\qquad X_0=x,
\end{talign*}
with $\sigma(x,t)\in\R^{d\times m}$, and define, for $k\in\{1,\dots,m\}$,
\begin{talign*}
\sigma_{\cdot k}(x,t)\in\R^d,\qquad
G_k(x,t):=\nabla_x \sigma_{\cdot k}(x,t)\in\R^{d\times d}.
\end{talign*}
Assume $b,\sigma,f,h,g$ are $C^2$ in $x$ (with polynomial growth bounds and moments
ensuring the differentiations and It\^o steps below are justified).

Define the pathwise functional
\begin{talign}
F(x)
:=
\int_0^T f(X_t,t)\,\dd t
+\int_0^T \langle h(X_t,t),\dd B_t\rangle
+g(X_T),
\qquad X_0=x,
\end{talign}
and its value $J(x):=\EE[F(x)\mid X_0=x]$.

Let $(a_t)_{t\in[0,T]}$ be the first-order adjoint from Lemma~\ref{lem:adjoint_method_sdes_xdep_detailed},
i.e.\ the adapted solution of
\begin{talign}
\label{eq:first_adjoint_for_second_order}
\dd a_t
&=
\Big(
-\nabla_x b(X_t,t)^\top a_t
-\nabla_x f(X_t,t)
+\sum_{k=1}^m G_k(X_t,t)^\top\big(G_k(X_t,t)^\top a_t+\nabla_x h_k(X_t,t)\big)
\Big)\,\dd t
\nonumber\\
&\quad
-\sum_{k=1}^m\big(G_k(X_t,t)^\top a_t+\nabla_x h_k(X_t,t)\big)\,\dd B_t^k,
\qquad
a_T=\nabla_x g(X_T).
\end{talign}
Define also the adapted \emph{matrix-valued} process $A_t\in\R^{d\times d}$ as the solution of the BSDE
\begin{talign}
\label{eq:second_adjoint_matrix_bsde_appendix}
\begin{split}
\dd A_t
&=
-\Big(
\nabla_x b(X_t,t)^\top A_t + A_t\nabla_x b(X_t,t)
+ \sum_{k=1}^m G_k(X_t,t)^\top A_t G_k(X_t,t)
+ \nabla_x^2 f(X_t,t)
\\
&\qquad \
+ \sum_{k=1}^m \nabla_x^2 h_k(X_t,t)\,G_k(X_t,t)
+ \sum_{i=1}^d a_t^{(i)} \,\nabla_x^2 b_i(X_t,t)
+ \frac12\sum_{k=1}^m\sum_{i=1}^d a_t^{(i)}\,\nabla_x^2\!\big(\sigma_{\cdot k}^{(i)}(X_t,t)\big)\,G_k(X_t,t)
\Big)\,\dd t
\\
&\quad
+\sum_{k=1}^m U_{t,k}\,\dd B_t^k,
\qquad
A_T=\nabla_x^2 g(X_T),
\end{split}
\end{talign}
where the diffusion matrices $U_{t,k}\in\R^{d\times d}$ are given explicitly by
\begin{talign}
\label{eq:Utk_explicit_appendix}
U_{t,k}
=
-\Big(
A_t\,G_k(X_t,t)+G_k(X_t,t)^\top A_t
+\nabla_x^2 h_k(X_t,t)
+\sum_{i=1}^d a_t^{(i)}\,\nabla_x^2\sigma_{ik}(X_t,t)
\Big).
\end{talign}
Then, for all directions $v,w\in\R^d$, the second directional derivative exists and satisfies the
\emph{pathwise Hessian-bilinear identity}
\begin{talign}
\label{eq:pathwise_hessian_bilinear}
D^2_{v,w}F(x) \;=\; \langle v,\;A_0\,w\rangle,
\end{talign}
and consequently,
\begin{talign}
\label{eq:hessian_of_expectation}
\langle v,\;(\nabla_x^2 J)(x)\,w\rangle
\;=\;
\EE\big[\langle v,\;A_0\,w\rangle\big].
\end{talign}
Equivalently, $\nabla_x^2 J(x)=\EE[A_0]$ (entrywise) under the same differentiation-under-expectation conditions.
\end{lemma}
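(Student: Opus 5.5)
We prove the second-order Hessian identity \eqref{eq:pathwise_hessian_bilinear} pathwise, by differentiating twice along the stochastic flow. Fix $v,w\in\R^d$ and introduce the first and second variation processes $Y^w_t:=\partial_w X_t$ and $Z^{v,w}_t:=\partial_v\partial_w X_t$. They solve the linear SDEs
\[
\dd Y^w_t=\nabla_x b\,Y^w_t\,\dd t+\sum_k G_k Y^w_t\,\dd B^k_t,\qquad Y^w_0=w,
\]
and
\[
\dd Z^{v,w}_t=\big(\nabla_x b\,Z^{v,w}_t+\nabla_x^2 b[Y^v_t,Y^w_t]\big)\dd t+\sum_k\big(G_k Z^{v,w}_t+\nabla_x^2\sigma_{\cdot k}[Y^v_t,Y^w_t]\big)\dd B^k_t,\qquad Z^{v,w}_0=0.
\]
Here $\nabla_x^2 b[\cdot,\cdot]$ denotes the vector with components $(Y^v)^\top\nabla_x^2 b_i\,Y^w$. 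Under the stated $C^2$ and polynomial-growth hypotheses, standard flow-differentiability results give these equations and justify exchanging derivatives with stochastic integrals. Differentiating $F$ twice then yields
\[
\begin{aligned}
D^2_{v,w}F={}&\int_0^T\Big(\langle\nabla_x f,Z^{v,w}_t\rangle+(Y^v_t)^\top\nabla_x^2 f\,Y^w_t\Big)\dd t\\
&+\sum_k\int_0^T\Big(\langle\nabla_x h_k,Z^{v,w}_t\rangle+(Y^v_t)^\top\nabla_x^2 h_k\,Y^w_t\Big)\dd B^k_t\\
&+\langle\nabla_x g,Z^{v,w}_T\rangle+(Y^v_T)^\top\nabla_x^2 g\,Y^w_T .
\end{aligned}
\]

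The core step is a duality argument. We would apply Itô's product rule to $\langle a_t,Z^{v,w}_t\rangle+(Y^v_t)^\top A_tY^w_t$ on $[0,T]$. By the terminal conditions, its value at $T$ reproduces the $g$-terms above. Its value at $0$ is $\langle a_0,0\rangle+\langle v,A_0w\rangle$, since $Z^{v,w}_0=0$, $Y^v_0=v$ and $Y^w_0=w$.

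The first piece, $\dd\langle a_t,Z_t\rangle$, is handled exactly as in the proof of Lemma~\ref{lem:adjoint_method_sdes_xdep_detailed}. The linear terms in $Z$ cancel against the drift of $a$, including the cross-variation $-\sum_k(G_k^\top a+\nabla h_k)^\top G_kZ$, which cancels the $+\sum_k G_k^\top G_k^\top a$ drift. This leaves $-\langle\nabla f,Z\rangle\dd t$ together with the source terms $\langle a,\nabla^2b[Y^v,Y^w]\rangle\dd t$ and the cross-variation $-\sum_k\langle G_k^\top a+\nabla h_k,\nabla^2\sigma_{\cdot k}[Y^v,Y^w]\rangle\dd t$, plus martingale terms.

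The second piece, $\dd(Y^{v\top}AY^w)$, produces several contributions: the drift of $A$ sandwiched by $Y^v,Y^w$; the terms $Y^{v\top}(\nabla b^\top A+A\nabla b)Y^w$ and $\sum_k Y^{v\top}G_k^\top AG_kY^w$ from the $Y$ dynamics; and the cross-variations $\sum_k Y^{v\top}(G_k^\top U_k+U_kG_k)Y^w$. Substituting the explicit $U_{t,k}$ from \eqref{eq:Utk_explicit_appendix}, these terms should cancel the $\nabla_x b$, $G_k^\top AG_k$, $\nabla^2 h_k G_k$ and $a\,\nabla^2\sigma\,G_k$ pieces of the $A$-drift.

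After these cancellations, the drift should collapse to $-(Y^v)^\top\nabla^2 f\,Y^w$ together with terms that exactly offset the second-order sources from the $Z$ equation, and the stochastic integrands should match those of $D^2_{v,w}F$. Rearranging the integrated identity then gives \eqref{eq:pathwise_hessian_bilinear}. Taking expectations and differentiating under the expectation, using the moment bounds, gives \eqref{eq:hessian_of_expectation}.

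The main obstacle is this cancellation bookkeeping. In particular, the $\tfrac12$ factor on the $a\,\nabla^2\sigma\,G_k$ term and the symmetrization over $(v,w)$ must be checked carefully. Since $Y^{v\top}MY^w$ only sees the symmetric part of $M$, we would compare symmetric parts throughout, noting that the cross-variation produces two copies $G_k^\top S+SG_k$ whose symmetric part matches the drift coefficient up to that factor. If a residual term remains, we would first try absorbing it into the martingale part by redefining $U_{t,k}$ up to antisymmetric pieces, which leaves $\EE[A_0]$ unchanged.
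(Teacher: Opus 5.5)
Your route is the one the paper sketches: first and second tangent processes, trading the $Z$-terms for $a$, then the quadratic form $\langle Y^v_t,A_tY^w_t\rangle$. Your matching of the $\dd B^k$ coefficients is right and forces exactly the $U_{t,k}$ of \eqref{eq:Utk_explicit_appendix}. The gap is the step you leave as ``should collapse''. With the drift written in \eqref{eq:second_adjoint_matrix_bsde_appendix} it does not collapse, and no bookkeeping will change that. Once $U_{t,k}$ is fixed, the $\dd t$ matching has a unique solution. Collecting precisely the contributions you listed gives
\[
\Gamma_t=-\Big(\nabla_x b^\top A_t+A_t\nabla_x b+\sum_{k}G_k^\top A_tG_k+\sum_{k}\big(G_k^\top U_{t,k}+U_{t,k}G_k\big)+\nabla_x^2 f+\sum_{i}a_t^{(i)}\nabla_x^2 b_i+\sum_{k,i}\beta_{t,k}^{(i)}\nabla_x^2\sigma_{ik}\Big),
\]
where $\beta_{t,k}=-(G_k^\top a_t+\nabla_x h_k)$ is the $\dd B^k$-coefficient of $a_t$. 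For $h=0$ this is exactly the drift of the second-order adjoint BSDE recalled in Appendix~\ref{app:second_order_smp}, with the unknown integrands replaced by the explicit $\beta_{t,k}$ and $U_{t,k}$.

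After substituting $U_{t,k}$, the bracket differs from the statement in two ways:
\begin{itemize}
\item Its $A$-dependent part is $\nabla_x b^\top A+A\nabla_x b-\sum_k\big(G_k^\top AG_k+(G_k^\top)^2A+AG_k^2\big)$, not $\nabla_x b^\top A+A\nabla_x b+\sum_kG_k^\top AG_k$.
\item Its $h$- and $\nabla_x^2\sigma$-terms are $-\sum_k\big[G_k^\top(\nabla_x^2h_k+S_k)+(\nabla_x^2h_k+S_k)G_k+\sum_i(G_k^\top a+\nabla_xh_k)^{(i)}\nabla_x^2\sigma_{ik}\big]$, with $S_k:=\sum_ia^{(i)}\nabla_x^2\sigma_{ik}$. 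The statement instead has the non-symmetric $\sum_k\big(\nabla_x^2h_kG_k+\tfrac12S_kG_k\big)$.
\end{itemize}
The paper's proof does not close this either: it only asserts that matching the $\dd t$ terms yields \eqref{eq:second_adjoint_matrix_bsde_appendix} and defers to Yong--Zhou.

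A concrete check: take $d=m=1$, $b=f=h=0$, $\sigma(x)=sx$, $g(x)=x^2/2$. Then $G=s$ and $U_t=-2sA_t$, and the statement prescribes $\dd A_t=-s^2A_t\,\dd t-2sA_t\,\dd B_t$. The true pathwise Hessian with respect to $X_t$ is $A_t=\exp\big(2s(B_T-B_t)-s^2(T-t)\big)$. Its left-point differential is $\dd A_t=3s^2A_t\,\dd t-2sA_t\,\dd B_t$, which is what $\Gamma_t=-(s^2-4s^2)A_t$ above gives. The left-point reading is the one your It\^o product rule uses. It is also the one in which the first-order adjoint $\dd a_t=s^2a_t\,\dd t-sa_t\,\dd B_t$ of Lemma~\ref{lem:adjoint_method_sdes_xdep_detailed} is correct. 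Solving the stated equation in that sense gives $A_0=e^{2sB_T+3s^2T}$, so $\EE[A_0]=e^{5s^2T}\neq e^{s^2T}=J''(x)$, and even \eqref{eq:hessian_of_expectation} fails. A right-point reading happens to rescue this example, but it is inconsistent with the first-order adjoint. For $d\ge2$ the non-symmetric forcing $\nabla_x^2h_kG_k+\tfrac12S_kG_k$ generically makes $A_0$ non-symmetric, which no Hessian is.

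Your fallback cannot repair this, for four reasons:
\begin{itemize}
\item The residual is a $\dd t$ term, and the $\dd B^k$ coefficients are already pinned down by the $\dd B^k$ matching.
\item Any change in $U_{t,k}$ re-enters the drift through $G_k^\top U_{t,k}+U_{t,k}G_k$.
\item $a$ and $A$ are integrated backward along the realized path, so they are not adapted (here $A_0$ is random). Their $\dd B$-integrals need not have mean zero, so $\EE[A_0]$ does change.
\item For independent $v,w$, the form $\langle Y^v_t,MY^w_t\rangle$ sees all of $M$, not only its symmetric part, and the failing example is scalar anyway.
\end{itemize}
What your duality argument actually proves, essentially verbatim, is the lemma with $\Gamma_t$ above in place of the stated drift.
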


\begin{lemma}[General-$f$ HJB verification step]
\label{lem:general_f_hjb_verification}
Let $u$ be a Markov control and define the cost-to-go
\begin{talign}
J(u;x,t)
=
\EE\Big[\int_t^T f\!\big(X_s^u,u(X_s^u,s),s\big)\,\dd s + g(X_T^u)\,\Big|\,X_t^u=x\Big],
\end{talign}
where $X^u$ solves
\begin{talign}
\dd X_s^u=b\big(X_s^u,u(X_s^u,s),s\big)\,\dd s+\sigma\big(X_s^u,u(X_s^u,s),s\big)\,\dd B_s,
\qquad X_t^u=x.
\end{talign}
Assume $J(u;\cdot,\cdot)\in C^{1,2}$ and that, for each $(x,t,p,M)$, the map
$\bar u\mapsto H(x,t;\bar u,p,M)$ attains its minimum, where the Hamiltonian is
\begin{talign}
H(x,t;\bar u,p,M)
:=
f(x,\bar u,t)
+\langle b(x,\bar u,t),p\rangle
+\tfrac12\Tr\!\big(\sigma(x,\bar u,t)\sigma(x,\bar u,t)^\top M\big).
\end{talign}
If, for all $(x,t)$,
\begin{talign}
\label{eq:general_policy_improvement}
u(x,t)\in\arg\min_{\bar u\in\mathbb{R}^k}
H\!\big(x,t;\bar u,\nabla_x J(u;x,t),\nabla_x^2 J(u;x,t)\big),
\end{talign}
then $J(u;\cdot,\cdot)$ satisfies the HJB equation
\begin{talign}
\label{eq:hjb_general_f}
0=\partial_t J
+\inf_{\bar u\in\mathbb{R}^k}\Big\{
f(x,\bar u,t)
+\langle b(x,\bar u,t),\nabla_x J\rangle
+\tfrac12\Tr\!\big(\sigma(x,\bar u,t)\sigma(x,\bar u,t)^\top\nabla_x^2 J\big)
\Big\},
\qquad
J(\cdot,T)=g.
\end{talign}
In particular, if the HJB solution is unique, then $J(u;x,t)=V(x,t)$ and $u$ is optimal.
\end{lemma}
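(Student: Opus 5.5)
The plan is to show that $J(u;\cdot,\cdot)$ satisfies the \emph{linear} backward Kolmogorov (Feynman--Kac) equation associated with the frozen control $u$. The policy condition \eqref{eq:general_policy_improvement} then lets us replace the Hamiltonian evaluated at $u$ by its infimum.

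Concretely, fix $(x,t)$ and let $X^u$ start from $X^u_t=x$. Consider the process
\begin{talign*}
N_s:=\int_t^s f\big(X^u_r,u(X^u_r,r),r\big)\,\dd r+J(u;X^u_s,s),\qquad s\in[t,T].
\end{talign*}
By the Markov property and the tower property,
\begin{talign*}
N_s=\EE\Big[\int_t^T f\,\dd r+g(X^u_T)\,\Big|\,\mathcal F_s\Big],
\end{talign*}
so $N$ is a martingale.

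On the other hand, since $J(u;\cdot,\cdot)\in C^{1,2}$, It\^o's formula applies. Localize with stopping times $\tau_n$, the exit times of balls, so that all integrands are bounded. This gives
\begin{talign*}
\dd N_s=\Big(\partial_t J+f+\langle b,\nabla_x J\rangle+\tfrac12\Tr(\sigma\sigma^\top\nabla_x^2 J)\Big)(X^u_s,s)\,\dd s+\nabla_x J^\top\sigma\,\dd B_s,
\end{talign*}
with all coefficients evaluated at the control $u(X^u_s,s)$. A continuous martingale of finite variation is constant, so the stopped drift integral vanishes almost surely.

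Next, divide by $s-t$ and let $s\downarrow t$. Continuity of the integrand, together with dominated convergence under the polynomial-growth and moment bounds, yields the pointwise identity
\begin{talign*}
\partial_t J(u;x,t)+H\big(x,t;u(x,t),\nabla_x J,\nabla_x^2 J\big)=0 .
\end{talign*}
Here one uses $\EE\big[\tfrac{1}{s-t}\int_t^{s\wedge\tau_n}(\cdot)\,\dd r\big]\to(\cdot)(x,t)$, since $\tau_n>t$ almost surely. The terminal condition $J(u;\cdot,T)=g$ is immediate from the definition.

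By \eqref{eq:general_policy_improvement} and the assumption that the minimum is attained, $H(x,t;u(x,t),\nabla_x J,\nabla_x^2J)=\inf_{\bar u}H(x,t;\bar u,\nabla_x J,\nabla_x^2J)$. Substituting this into the identity above gives exactly \eqref{eq:hjb_general_f}.

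For the final claim, recall that the value function $V$ solves the same HJB equation with the same terminal data (standard dynamic programming). Uniqueness then forces $J(u;\cdot,\cdot)=V$. Since $J(u;\cdot,\cdot)$ is the cost of $u$ and $V$ is the infimal cost, $u$ attains the optimum, i.e.\ $u$ is optimal. (Alternatively, one can avoid citing that $V$ solves HJB by a direct verification argument. For any competitor $w$, apply It\^o to $J(u;X^w_s,s)$ and use $\partial_tJ+H(\cdot;w,\nabla J,\nabla^2J)\ge0$ to get $J(u;x,t)\le J(w;x,t)$.)

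The main obstacle is the passage from the almost-sure vanishing of the drift integral to the pointwise PDE at the starting point. This step needs the localization to remove the stochastic integral and continuity of $(x,t)\mapsto \partial_tJ+H(\cdot;u,\ldots)$. If $u$ is merely measurable rather than continuous, I would instead conclude the equation only for a.e.\ $(x,t)$ and state the result accordingly.
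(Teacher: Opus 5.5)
Your proof is correct and follows the same route as the paper. The paper writes the dynamic programming identity $J(u;x,t)=\EE[J(u;X^u_{t+\Delta t},t+\Delta t)+\int_t^{t+\Delta t}f\,\dd s\mid X^u_t=x]$ and lets $\Delta t\to0$ to obtain $\partial_tJ+H(x,t;u(x,t),\nabla_xJ,\nabla_x^2J)=0$; your martingale, It\^o and finite-variation argument makes that limit rigorous, and both proofs then substitute the argmin condition to get HJB. Your direct verification argument for the final claim goes beyond the paper, which only asserts that claim, and it removes the need for the HJB uniqueness hypothesis at the cost of growth and integrability conditions on the stochastic integral.
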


\subsection{Proof of Lemma \ref{lem:first_variation_adjoint_general}}
Fix a Markov control $u$ and a perturbation $v$, and set $u^\e := u + \e v$.
Let $X^\e$ be the controlled diffusion
\begin{talign}
\dd X_t^\e
=
b\big(X_t^\e,u^\e(X_t^\e,t),t\big)\,\dd t
+
\sigma\big(X_t^\e,u^\e(X_t^\e,t),t\big)\,\dd B_t,
\qquad X_0^\e \sim P_0 .
\end{talign}

\paragraph{Step 1: Value function and martingale identity.}
Define the value function associated with the \emph{fixed} control $u$ by
\begin{talign*}
J(x,t)
:=
\EE\Big[\int_t^T f\big(X_s^{u;t,x},u(X_s^{u;t,x},s),s\big)\,\dd s
+ g(X_T^{u;t,x})\Big],
\end{talign*}
where $X^{u;t,x}$ denotes the solution started from $x$ at time $t$ and driven
by $u$.
Under the stated regularity assumptions, $J\in C^{1,2}$ and satisfies the
backward PDE
\begin{talign*}
\partial_t J(x,t)
+
H\!\big(x,t;\,u(x,t),\nabla_x J(x,t),\nabla_x^2 J(x,t)\big)
=
0,
\qquad
J(x,T)=g(x),
\end{talign*}
with Hamiltonian
\begin{talign*}
H(x,t;u,p,M)
=
f(x,u,t)+\langle b(x,u,t),p\rangle
+\tfrac12\Tr\!\big(\sigma(x,u,t)\sigma(x,u,t)^\top M\big).
\end{talign*}

Apply It\^o's formula to the process $t\mapsto J(X_t^\e,t)$ under the dynamics
of $X^\e$:
\begin{align*}
dJ(X_t^\e,t)
&=
\Big(
\partial_t J
+
\langle \nabla_x J, b(X_t^\e,u^\e(X_t^\e,t),t)\rangle \\
&\qquad\qquad
+
\tfrac12
\Tr\!\big(
\sigma(X_t^\e,u^\e(X_t^\e,t),t)\sigma(X_t^\e,u^\e(X_t^\e,t),t)^\top
\nabla_x^2 J
\big)
\Big)\,\dd t \\
&\quad
+
\nabla_x J(X_t^\e,t)\,
\sigma(X_t^\e,u^\e(X_t^\e,t),t)\,\dd B_t .
\end{align*}
Adding $f(X_t^\e,u^\e(X_t^\e,t),t)\,\dd t$ and integrating from $0$ to $T$ yields
\begin{align}
\label{eq:martingale_identity}
g(X_T^\e)
- J(X_0^\e,0)
+ \int_0^T f(X_t^\e,u^\e(X_t^\e,t),t)\,\dd t
=
\int_0^T G_t^\e\,\dd t
+
\int_0^T Z_t^\e\,\dd B_t ,
\end{align}
where
\begin{align*}
G_t^\e
&:=
\partial_t J(X_t^\e,t)
+
f(X_t^\e,u^\e(X_t^\e,t),t)
+
\langle \nabla_x J(X_t^\e,t), b(X_t^\e,u^\e(X_t^\e,t),t)\rangle \\
&\qquad
+
\tfrac12
\Tr\!\big(
\sigma(X_t^\e,u^\e(X_t^\e,t),t)\sigma(X_t^\e,u^\e(X_t^\e,t),t)^\top
\nabla_x^2 J(X_t^\e,t)
\big),
\\
Z_t^\e
&:=
\nabla_x J(X_t^\e,t)\,
\sigma(X_t^\e,u^\e(X_t^\e,t),t).
\end{align*}

By the PDE satisfied by $J$, the drift term can be written as the difference of
Hamiltonians:
\[
G_t^\e
=
H\!\big(X_t^\e,t;\,u^\e(X_t^\e,t),p(X_t^\e,t),M(X_t^\e,t)\big)
-
H\!\big(X_t^\e,t;\,u(X_t^\e,t),p(X_t^\e,t),M(X_t^\e,t)\big),
\]
where $p=\nabla_x J$ and $M=\nabla_x^2 J$.

Taking expectations in \eqref{eq:martingale_identity} and using that the
stochastic integral has zero mean, together with the fact that
$\EE[J(X_0^\e,0)]$ does not depend on $\e$, we obtain
\begin{align}
\label{eq:J_difference}
\mathcal J(u^\e)-\mathcal J(u)
=
\EE\int_0^T
\Big(
H(X_t^\e,t;u^\e(X_t^\e,t),p(X_t^\e,t),M(X_t^\e,t))
-
H(X_t^\e,t;u(X_t^\e,t),p(X_t^\e,t),M(X_t^\e,t))
\Big)\,\dd t .
\end{align}

\paragraph{Step 2: Differentiation at $\e=0$.}
Divide \eqref{eq:J_difference} by $\e$ and let $\e\to0$.
By the $C^1$ regularity of $H$ in $u$, polynomial growth bounds, and the
convergence $X_t^\e\to X_t^u$ in $L^p$, dominated convergence yields
\[
\frac{\dd}{\dd\e}\mathcal J(u^\e)\Big|_{\e=0}
=
\EE\int_0^T
\Big\langle
v(X_t^u,t),\,
\nabla_u H\!\big(X_t^u,t;\,u(X_t^u,t),p_t,M_t\big)
\Big\rangle\,\dd t,
\]
where
\[
p_t := \nabla_x J(X_t^u,t),
\qquad
M_t := \nabla_x^2 J(X_t^u,t).
\]
This proves the claim.

\subsection{Proof of Lemma \ref{lem:adjoint_method_sdes_xdep_detailed}}
We prove the first identity\cref{eq:adjoint_xdep_grad_identity_noexpectation}; the identity\cref{eq:adjoint_xdep_grad_identity} follows by exchanging the gradient and the expectation, and the exponential identity\cref{eq:adjoint_xdep_exp_identity} follows by the chain rule.

\paragraph{Step 1: directional derivative and tangent process.}
Fix a direction $v\in\R^d$ and let
\begin{talign}
\xi_t \;\coloneqq\; \nabla_x X_t \, v \in \R^d
\end{talign}
denote the directional derivative of $X_t$ with respect to the initial condition.
Differentiating \eqref{eq:adjoint_xdep_forward_sde} in the direction $v$ yields the (linear) tangent SDE
\begin{talign}
\label{eq:adjoint_xdep_tangent}
\dd \xi_t
=
\nabla_x b(X_t,t)\,\xi_t\,\dd t
+ \sum_{k=1}^m G_k(X_t,t)\,\xi_t\,\dd B_t^k,
\qquad
\xi_0=v.
\end{talign}

\paragraph{Step 2: directional derivative of the functional.}
Define the pathwise functional
\begin{talign}
F(x)
\;\coloneqq\;
\int_0^T f(X_t,t)\,\dd t
+ \int_0^T \langle h(X_t,t), \dd B_t\rangle
+ g(X_T), \qquad X_0 = x.
\end{talign}
By the chain rule,
its directional derivative along $v$ is
\begin{talign}
\label{eq:adjoint_xdep_dirder_F}
D_v F(x)
=
\int_0^T \langle \nabla_x f(X_t,t), \xi_t\rangle\,\dd t
+ \sum_{k=1}^m \int_0^T \langle \nabla_x h_k(X_t,t), \xi_t\rangle\,\dd B_t^k
+ \langle \nabla_x g(X_T), \xi_T\rangle.
\end{talign}

\paragraph{Step 3: choose an adjoint so that an It\^{o} product becomes exact.}
Let $a_t$ be an adapted process to be chosen later and consider the scalar product
\(\langle a_t,\xi_t\rangle\).
Apply It\^{o}'s product rule using \eqref{eq:adjoint_xdep_tangent}:
\begin{talign}
\label{eq:adjoint_xdep_ito_product_general}
\dd \langle a_t,\xi_t\rangle
=
\langle \dd a_t,\xi_t\rangle + \langle a_t,\dd \xi_t\rangle + \dd \langle a,\xi\rangle_t,
\end{talign}
where the quadratic covariation term is
\begin{talign}
\dd \langle a,\xi\rangle_t
=
\sum_{k=1}^m
\Big\langle (\text{diffusion coeff.\ of }a_t\text{ in }dB^k),
             (\text{diffusion coeff.\ of }\xi_t\text{ in }dB^k)
\Big\rangle \dd t.
\end{talign}

Write the (unknown) diffusion of $a_t$ as
\begin{talign}
\dd a_t = \alpha_t\,\dd t + \sum_{k=1}^m \beta_{t,k}\,\dd B_t^k,
\end{talign}
with $\alpha_t\in\R^d$, $\beta_{t,k}\in\R^d$ adapted.
Then, from \eqref{eq:adjoint_xdep_tangent},
\begin{talign}
\langle a_t,\dd \xi_t\rangle
=
\langle a_t,\nabla_x b(X_t,t)\xi_t\rangle \dd t
+
\sum_{k=1}^m \langle a_t, G_k(X_t,t)\xi_t\rangle \dd B_t^k,
\end{talign}
and
\begin{talign}
\dd \langle a,\xi\rangle_t
=
\sum_{k=1}^m \langle \beta_{t,k},\, G_k(X_t,t)\xi_t\rangle\,\dd t
=
\Big\langle \sum_{k=1}^m G_k(X_t,t)^\top \beta_{t,k},\, \xi_t\Big\rangle \dd t.
\end{talign}
Substituting into \eqref{eq:adjoint_xdep_ito_product_general} gives
\begin{talign}
\label{eq:adjoint_xdep_ito_product_expanded}
\begin{split}
\dd \langle a_t,\xi_t\rangle
&=
\Big\langle \alpha_t + \nabla_x b(X_t,t)^\top a_t + \sum_{k=1}^m G_k(X_t,t)^\top \beta_{t,k},\, \xi_t\Big\rangle \dd t
\\
&\quad
+ \sum_{k=1}^m \Big\langle \beta_{t,k} + G_k(X_t,t)^\top a_t,\, \xi_t\Big\rangle \dd B_t^k.
\end{split}
\end{talign}

\paragraph{Step 4: match the desired $\dd t$ and $\dd B_t$ terms.}
We would like $\dd \langle a_t,\xi_t\rangle$ to reproduce (minus) the integrands in \eqref{eq:adjoint_xdep_dirder_F}, i.e.
\begin{talign}
\label{eq:adjoint_xdep_matching_goal}
\dd \langle a_t,\xi_t\rangle
=
-\langle \nabla_x f(X_t,t), \xi_t\rangle\,\dd t
-\sum_{k=1}^m \langle \nabla_x h_k(X_t,t), \xi_t\rangle\,\dd B_t^k.
\end{talign}
Comparing \eqref{eq:adjoint_xdep_ito_product_expanded} and \eqref{eq:adjoint_xdep_matching_goal},
it suffices to choose $\beta_{t,k}$ and $\alpha_t$ so that, for all $k$,
\begin{talign}
\label{eq:adjoint_xdep_choice_beta}
\beta_{t,k} + G_k(X_t,t)^\top a_t &= -\nabla_x h_k(X_t,t),
\\
\label{eq:adjoint_xdep_choice_alpha}
\alpha_t + \nabla_x b(X_t,t)^\top a_t + \sum_{k=1}^m G_k(X_t,t)^\top \beta_{t,k} &= -\nabla_x f(X_t,t).
\end{talign}
From \eqref{eq:adjoint_xdep_choice_beta},
\(
\beta_{t,k} = -G_k(X_t,t)^\top a_t - \nabla_x h_k(X_t,t)
\).
Substituting into \eqref{eq:adjoint_xdep_choice_alpha} yields
\begin{talign}
\alpha_t
=
-\nabla_x b(X_t,t)^\top a_t
-\nabla_x f(X_t,t)
+ \sum_{k=1}^m G_k(X_t,t)^\top\big(G_k(X_t,t)^\top a_t + \nabla_x h_k(X_t,t)\big).
\end{talign}
Therefore the choice \eqref{eq:adjoint_xdep_a_sde} makes \eqref{eq:adjoint_xdep_matching_goal} hold.

\paragraph{Step 5: conclude.}
Integrating \eqref{eq:adjoint_xdep_matching_goal} from $0$ to $T$ gives
\begin{talign}
\langle a_T,\xi_T\rangle - \langle a_0,\xi_0\rangle
=
-\int_0^T \langle \nabla_x f(X_t,t), \xi_t\rangle\,\dd t
-\sum_{k=1}^m \int_0^T \langle \nabla_x h_k(X_t,t), \xi_t\rangle\,\dd B_t^k.
\end{talign}
Using $\xi_0=v$ and setting the terminal condition $a_T=\nabla_x g(X_T)$, we obtain
\begin{talign}
\langle a_0, v\rangle
=
\int_0^T \langle \nabla_x f(X_t,t), \xi_t\rangle\,\dd t
+\sum_{k=1}^m \int_0^T \langle \nabla_x h_k(X_t,t), \xi_t\rangle\,\dd B_t^k
+\langle \nabla_x g(X_T), \xi_T\rangle
=
D_v F(x),
\end{talign}
where the last equality is \eqref{eq:adjoint_xdep_dirder_F}.
Since this holds for all directions $v$, it implies the pathwise identity
\(\nabla_x F(x) = a_0\).

\paragraph{Step 6: exchange gradient and expectation.}
Under standard regularity and integrability assumptions ensuring that $\nabla_x F(x)$ is integrable
and that differentiation under the expectation is valid, taking expectations in \eqref{eq:adjoint_xdep_param_pathwise_grad} yields
\begin{talign}
\label{eq:adjoint_xdep_param_expected_grad}
\nabla_x \EE\!\left[F(x)\,\big|\,X_0=x\right]
=
\EE\!\left[\nabla_x F(x)\right]
=
\EE\!\left[a_0 \right],
\end{talign}
which is the desired identity\cref{eq:adjoint_xdep_grad_identity}.

Next, we prove the parameter-gradient identities\cref{eq:theta} and\cref{eq:theta_expectation} following an analogous structure. Throughout, fix $\theta$ and write $X_t \equiv X_t^\theta$ for the solution of
\begin{talign}
\label{eq:adjoint_xdep_forward_sde_theta}
    \dd X_t = b_\theta(X_t,t)\,\dd t + \sigma(X_t,t)\,\dd B_t,
    \qquad X_0=x,
\end{talign}
with $\sigma$ independent of $\theta$. Let $a_t$ be the adjoint process defined in \cref{eq:adjoint_xdep_a_sde} along the trajectory $X^\theta$.

\paragraph{Step 1': directional derivative in parameter space and parameter tangent process.}
Let $\vartheta\in\R^p$ be an arbitrary direction in parameter space and define
\begin{talign}
    \eta_t \;\coloneqq\; D_\vartheta X_t^\theta \in \R^d,
\end{talign}
the directional derivative of $X_t^\theta$ with respect to $\theta$ along $\vartheta$.
Differentiating \eqref{eq:adjoint_xdep_forward_sde_theta} in direction $\vartheta$ yields the (linear) parameter-tangent SDE
\begin{talign}
\label{eq:adjoint_xdep_param_tangent}
    \dd\eta_t
    &=
    \Big(
        \nabla_x b_\theta(X_t,t)\,\eta_t
        + \nabla_\theta b_\theta(X_t,t)\,\vartheta
    \Big)\,\dd t
    + \sum_{k=1}^m G_k(X_t,t)\,\eta_t\,\dd B_t^k,
    \qquad
    \eta_0 = 0,
\end{talign}
where $\nabla_\theta b_\theta(x,t)\in\R^{d\times p}$ and $\nabla_\theta b_\theta(x,t)\,\vartheta\in\R^d$.
The initial condition is $\eta_0=0$ because $X_0=x$ does not depend on $\theta$.

\paragraph{Step 2': directional derivative of the functional.}
Define the pathwise functional
\begin{talign}
F(\theta)
\;\coloneqq\;
\int_0^T f(X_t,t)\,\dd t
+ \int_0^T \langle h(X_t,t), \dd B_t\rangle
+ g(X_T).
\end{talign}
By the chain rule, its directional derivative along $\vartheta$ is
\begin{talign}
\label{eq:adjoint_xdep_param_dirder_F}
D_\vartheta F(\theta)
=
\int_0^T \langle \nabla_x f(X_t,t), \eta_t\rangle\,\dd t
+ \sum_{k=1}^m \int_0^T \langle \nabla_x h_k(X_t,t), \eta_t\rangle\,\dd B_t^k
+ \langle \nabla_x g(X_T), \eta_T\rangle.
\end{talign}

\paragraph{Step 3': It\^{o} product with the same adjoint.}
Consider the scalar product $\langle a_t,\eta_t\rangle$.
Applying It\^{o}'s product rule gives
\begin{talign}
\label{eq:adjoint_xdep_param_ito_product_general}
\dd \langle a_t,\eta_t\rangle
=
\langle \dd a_t,\eta_t\rangle + \langle a_t,\dd \eta_t\rangle + \dd \langle a,\eta\rangle_t.
\end{talign}
Write the diffusion of $a_t$ (from \cref{eq:adjoint_xdep_a_sde}) as
\begin{talign}
\label{eq:adjoint_xdep_param_a_decomp}
\dd a_t = \alpha_t\,\dd t + \sum_{k=1}^m \beta_{t,k}\,\dd B_t^k,
\end{talign}
where
\begin{talign}
\label{eq:adjoint_xdep_param_alpha_beta}
\beta_{t,k} &= -\big(G_k(X_t,t)^\top a_t + \nabla_x h_k(X_t,t)\big),\\
\alpha_t
&=
-\nabla_x b_\theta(X_t,t)^\top a_t
-\nabla_x f(X_t,t)
+ \sum_{k=1}^m G_k(X_t,t)^\top\big(G_k(X_t,t)^\top a_t + \nabla_x h_k(X_t,t)\big).
\end{talign}
From \eqref{eq:adjoint_xdep_param_tangent},
\begin{talign}
\label{eq:adjoint_xdep_param_a_deta}
\langle a_t,\dd \eta_t\rangle
&=
\Big\langle a_t,\nabla_x b_\theta(X_t,t)\eta_t + \nabla_\theta b_\theta(X_t,t)\,\vartheta \Big\rangle \dd t
+ \sum_{k=1}^m \langle a_t, G_k(X_t,t)\eta_t\rangle \dd B_t^k.
\end{talign}
Moreover, the quadratic covariation term is
\begin{talign}
\label{eq:adjoint_xdep_param_cov}
\dd \langle a,\eta\rangle_t
=
\sum_{k=1}^m \langle \beta_{t,k},\, G_k(X_t,t)\eta_t\rangle\,\dd t
=
\Big\langle \sum_{k=1}^m G_k(X_t,t)^\top \beta_{t,k},\, \eta_t\Big\rangle \dd t.
\end{talign}
Substituting \eqref{eq:adjoint_xdep_param_a_decomp}--\eqref{eq:adjoint_xdep_param_cov} into \eqref{eq:adjoint_xdep_param_ito_product_general} yields
\begin{talign}
\label{eq:adjoint_xdep_param_ito_product_expanded}
\begin{split}
\dd \langle a_t,\eta_t\rangle
&=
\Big\langle \alpha_t + \nabla_x b_\theta(X_t,t)^\top a_t + \sum_{k=1}^m G_k(X_t,t)^\top \beta_{t,k},\, \eta_t\Big\rangle \dd t
+ \Big\langle a_t, \nabla_\theta b_\theta(X_t,t)\,\vartheta \Big\rangle \dd t
\\
&\quad
+ \sum_{k=1}^m \Big\langle \beta_{t,k} + G_k(X_t,t)^\top a_t,\, \eta_t\Big\rangle \dd B_t^k.
\end{split}
\end{talign}

\paragraph{Step 4': cancel the $\mathrm{d}t$ and $\mathrm{d}B_t$ terms and keep the forcing term.}
By the specific choice \eqref{eq:adjoint_xdep_a_sde} (equivalently \eqref{eq:adjoint_xdep_param_alpha_beta}),
we have for all $k$,
\begin{talign}
\beta_{t,k} + G_k(X_t,t)^\top a_t = -\nabla_x h_k(X_t,t),
\qquad
\alpha_t + \nabla_x b_\theta(X_t,t)^\top a_t + \sum_{k=1}^m G_k(X_t,t)^\top \beta_{t,k} = -\nabla_x f(X_t,t).
\end{talign}
Substituting these identities into \eqref{eq:adjoint_xdep_param_ito_product_expanded} gives
\begin{talign}
\label{eq:adjoint_xdep_param_matching_goal}
\dd \langle a_t,\eta_t\rangle
=
-\langle \nabla_x f(X_t,t), \eta_t\rangle\,\dd t
-\sum_{k=1}^m \langle \nabla_x h_k(X_t,t), \eta_t\rangle\,\dd B_t^k
+ \Big\langle a_t, \nabla_\theta b_\theta(X_t,t)\,\vartheta \Big\rangle \dd t.
\end{talign}

\paragraph{Step 5': integrate and conclude the pathwise gradient formula.}
Integrating \eqref{eq:adjoint_xdep_param_matching_goal} from $0$ to $T$ yields
\begin{talign}
\label{eq:adjoint_xdep_param_integrated}
\begin{split}
\langle a_T,\eta_T\rangle - \langle a_0,\eta_0\rangle
&=
-\int_0^T \langle \nabla_x f(X_t,t), \eta_t\rangle\,\dd t
-\sum_{k=1}^m \int_0^T \langle \nabla_x h_k(X_t,t), \eta_t\rangle\,\dd B_t^k
\\
&\quad
+ \int_0^T \Big\langle a_t, \nabla_\theta b_\theta(X_t,t)\,\vartheta \Big\rangle \dd t.
\end{split}
\end{talign}
Using $\eta_0=0$ and the terminal condition $a_T=\nabla_x g(X_T)$, we rearrange \eqref{eq:adjoint_xdep_param_integrated} to obtain
\begin{talign}
\label{eq:adjoint_xdep_param_dirder_equals}
\begin{split}
\int_0^T \Big\langle a_t, \nabla_\theta b_\theta(X_t,t)\,\vartheta \Big\rangle \dd t
&=
\int_0^T \langle \nabla_x f(X_t,t), \eta_t\rangle\,\dd t
+ \sum_{k=1}^m \int_0^T \langle \nabla_x h_k(X_t,t), \eta_t\rangle\,\dd B_t^k
+ \langle \nabla_x g(X_T), \eta_T\rangle
\\
&= D_\vartheta F(\theta),
\end{split}
\end{talign}
where the last equality is \eqref{eq:adjoint_xdep_param_dirder_F}.
Since \eqref{eq:adjoint_xdep_param_dirder_equals} holds for all directions $\vartheta\in\R^p$, it implies the vector identity
\begin{talign}
\label{eq:adjoint_xdep_param_pathwise_grad}
\nabla_\theta F(\theta)
=
\int_0^T \big(\nabla_\theta b_\theta(X_t,t)\big)^\top a_t\,\dd t.
\end{talign}
This proves the first desired identity.

\paragraph{Step 6': exchange gradient and expectation.}
Under standard regularity and integrability assumptions ensuring that $\nabla_\theta F(\theta)$ is integrable
and that differentiation under the expectation is valid, taking expectations in \eqref{eq:adjoint_xdep_param_pathwise_grad} yields
\begin{talign}
\nabla_\theta \EE\!\left[F(\theta)\,\big|\,X_0=x\right]
=
\EE\!\left[\nabla_\theta F(\theta)\right]
=
\EE\!\left[\int_0^T \big(\nabla_\theta b_\theta(X_t,t)\big)^\top a_t\,\dd t\right],
\end{talign}
which is the second desired identity.

\subsection{Proof of Lemma \ref{lem:second_order_adjoint_xdep}}

\begin{proof}[Proof (sketch)]
We prove \eqref{eq:pathwise_hessian_bilinear}; \eqref{eq:hessian_of_expectation} follows by exchanging
second differentiation and expectation under the assumed integrability bounds.

\paragraph{Step 1: first and second tangent processes.}
Fix $v,w\in\R^d$. Let $\xi_t^v := (\nabla_x X_t)\,v$ and $\xi_t^w := (\nabla_x X_t)\,w$.
Differentiating the SDE gives the first-order tangent equations
\begin{talign*}
\dd \xi_t^\bullet
=
\nabla_x b(X_t,t)\,\xi_t^\bullet\,\dd t
+\sum_{k=1}^m G_k(X_t,t)\,\xi_t^\bullet\,\dd B_t^k,
\qquad
\xi_0^\bullet=\bullet,
\quad \bullet\in\{v,w\}.
\end{talign*}
Define the mixed second derivative (second tangent) $\zeta_t^{v,w}:=D^2_{v,w}X_t\in\R^d$.
A standard differentiation of the tangent SDE yields
\begin{talign}
\label{eq:second_tangent_sde}
\begin{split}
\dd\zeta_t^{v,w}
&=
\Big(\nabla_x b(X_t,t)\,\zeta_t^{v,w} + \nabla_x^2 b(X_t,t)[\xi_t^v,\xi_t^w]\Big)\,\dd t
\\
&\quad
+\sum_{k=1}^m
\Big(G_k(X_t,t)\,\zeta_t^{v,w} + \nabla_x G_k(X_t,t)[\xi_t^v,\xi_t^w]\Big)\,\dd B_t^k,
\qquad
\zeta_0^{v,w}=0,
\end{split}
\end{talign}
where $\nabla_x^2 b[\cdot,\cdot]\in\R^d$ denotes the bilinear action of the Hessian tensor of $b$
and $\nabla_x G_k[\cdot,\cdot]\in\R^d$ denotes the bilinear action of the derivative of $G_k$.

\paragraph{Step 2: second directional derivative of the functional.}
Differentiate $F(x)$ twice (chain rule) to obtain
\begin{talign}
\label{eq:second_dirder_F}
\begin{split}
D^2_{v,w}F(x)
&=
\int_0^T \Big(
\langle \nabla_x^2 f(X_t,t)\,\xi_t^v,\xi_t^w\rangle
+\langle \nabla_x f(X_t,t),\zeta_t^{v,w}\rangle
\Big)\,\dd t
\\
&\quad
+\sum_{k=1}^m\int_0^T
\Big(
\langle \nabla_x^2 h_k(X_t,t)\,\xi_t^v,\xi_t^w\rangle
+\langle \nabla_x h_k(X_t,t),\zeta_t^{v,w}\rangle
\Big)\,\dd B_t^k
\\
&\quad
+\langle \nabla_x^2 g(X_T)\,\xi_T^v,\xi_T^w\rangle
+\langle \nabla_x g(X_T),\zeta_T^{v,w}\rangle.
\end{split}
\end{talign}

\paragraph{Step 3: eliminate the $\zeta$-terms using the first adjoint.}
Apply It\^o's product rule to $\langle a_t,\zeta_t^{v,w}\rangle$ using \eqref{eq:first_adjoint_for_second_order}
and \eqref{eq:second_tangent_sde}. Choose the diffusion of $a_t$ as in \eqref{eq:first_adjoint_for_second_order}
so that the resulting $\dd t$ and $\dd B_t$ coefficients match $-\langle\nabla_x f,\zeta\rangle$ and
$-\langle\nabla_x h_k,\zeta\rangle$.
After integrating from $0$ to $T$ and using $\zeta_0^{v,w}=0$ and $a_T=\nabla_x g(X_T)$, one obtains
\begin{talign}
\label{eq:zeta_elimination}
\begin{split}
\langle \nabla_x g(X_T),\zeta_T^{v,w}\rangle
+\int_0^T \langle \nabla_x f(X_t,t),\zeta_t^{v,w}\rangle\,\dd t
+\sum_{k=1}^m\int_0^T \langle \nabla_x h_k(X_t,t),\zeta_t^{v,w}\rangle\,\dd B_t^k
\\
=
\int_0^T \Big\langle a_t,\nabla_x^2 b(X_t,t)[\xi_t^v,\xi_t^w]\Big\rangle\,\dd t
+\sum_{k=1}^m\int_0^T \Big\langle a_t,\nabla_x G_k(X_t,t)[\xi_t^v,\xi_t^w]\Big\rangle\,\dd B_t^k
\\
\quad
+\text{(terms depending only on $\xi^v,\xi^w$)},
\end{split}
\end{talign}
i.e.\ all $\zeta$-dependence has been traded for terms involving only $(a,\xi^v,\xi^w)$.

\paragraph{Step 4: matrix adjoint and identification of $U_{t,k}$.}
Consider the scalar process $\langle \xi_t^v, A_t \xi_t^w\rangle$ where
\begin{talign*}
\dd A_t=\Gamma_t\,\dd t+\sum_{k=1}^m U_{t,k}\,\dd B_t^k.
\end{talign*}
Using the first-order tangent SDEs and It\^o's formula, the $\dd B_t^k$-coefficient of
$\dd \langle \xi_t^v,A_t\xi_t^w\rangle$ is
\begin{talign*}
\big\langle \xi_t^v,\,
\big(U_{t,k}+A_tG_k(X_t,t)+G_k(X_t,t)^\top A_t\big)\xi_t^w
\big\rangle.
\end{talign*}

To cancel the remaining stochastic integrands in $D^2_{v,w}F(x)$ for all $v,w$,
we require
\begin{talign*}
U_{t,k}+A_tG_k+G_k^\top A_t
=
-\Big(
\nabla_x^2 h_k
+\sum_{i=1}^d a_t^{(i)}\,\nabla_x^2\sigma_{ik}
\Big),
\end{talign*}
which yields \eqref{eq:Utk_explicit_appendix}. With this choice, all $dB$-terms cancel identically.

Matching the $\dd t$-terms then uniquely determines the drift $\Gamma_t$, resulting in
the BSDE \eqref{eq:second_adjoint_matrix_bsde_appendix}. Integrating from $0$ to $T$,
using $\xi_0^v=v$, $\xi_0^w=w$, and $A_T=\nabla_x^2 g(X_T)$, yields
\begin{talign*}
D^2_{v,w}F(x)=\langle v,A_0 w\rangle,
\end{talign*}
which proves \eqref{eq:pathwise_hessian_bilinear}.

The algebraic details of Steps~3 and~4 (It\^{o} product rule computations and coefficient matching) are standard but lengthy; since the present paper only uses this construction to motivate the BAM objective rather than to develop a new second-order SMP theory, we record the structure here and refer to \cite[Chapter~3]{yong2012stochastic} for the full derivation in the general diffusion setting.
\end{proof}

\subsection{Proof of Lemma \ref{lem:general_f_hjb_verification}}
By the dynamic programming principle, for $\Delta t>0$,
\begin{talign*}
J(u;x,t)
=
\EE[J(u;X_{t+\Delta t}^u,t+\Delta t)\mid X_t^u=x]
+\EE\Big[\int_t^{t+\Delta t} f(X_s^u,u(X_s^u,s),s)\,\dd s\ \Big|\ X_t^u=x\Big].
\end{talign*}
Dividing by $\Delta t$ and sending $\Delta t\to0$ gives
\begin{talign*}
0=\mathcal T^u J(u;x,t)+f(x,u(x,t),t),
\end{talign*}
where the controlled generator is
\begin{talign*}
\mathcal T^u\phi
=
\partial_t\phi
+\langle\nabla_x\phi,\,b(x,u(x,t),t)\rangle
+\tfrac12\Tr\!\big(\sigma(x,u(x,t),t)\sigma(x,u(x,t),t)^\top\nabla_x^2\phi\big).
\end{talign*}
Therefore,
\begin{talign*}
0=\partial_t J
+f(x,u(x,t),t)
+\langle b(x,u(x,t),t),\nabla_x J\rangle
+\tfrac12\Tr\!\big(\sigma(x,u(x,t),t)\sigma(x,u(x,t),t)^\top\nabla_x^2 J\big).
\end{talign*}
If $u(x,t)$ achieves the pointwise minimum in \eqref{eq:general_policy_improvement},
then the above identity is equivalent to the HJB equation \cref{eq:hjb_general_f}.

\section{Proof of \Cref{thm:general_hamiltonian_lean_adjoint_matching}: Lean Adjoint Matching for General Hamiltonians}
\label{subsec:derivation_lean_AM}

    We follow the same blueprint as \cite[Prop.~7]{domingoenrich2025adjoint}, but now for a general
Hamiltonian and state-dependent diffusion.
    Let $u$ be an arbitrary control. If $\tilde{a}(t; X^{u})
    $ is the solution of the Lean Adjoint ODE
    \eqref{eq:general_adjoint},
    it satisfies the integral equation
    \begin{talign}
    \begin{split}
    \tilde{a}(t; X^{u}) &= \int_t^T \big(
        \nabla_1 b(X^{u}_s,u(X^{u}_s,s),s)^{\top}\, \tilde a(s;X^{u})
        + \nabla_1 f(X^{u}_s,u(X^{u}_s,s),s) \big) \, \mathrm{d}s
        + \nabla g(X^{u}_T).
    \end{split}
    \end{talign}
    Hence, using the tower property of conditional expectation in the second equality:
    \begin{talign}
    \begin{split} \label{eq:expected_lean_integral}
    \mathbb{E}\big[\tilde{a}(t; X^u) \big| X^u_t \big] &=
    \mathbb{E}\Big[ \int_t^T \big(
        \nabla_1 b(X^u_s,u(X^u_s,s),s)^{\top}\, \mathbb{E}\big[ \tilde a(s;X^u) \big| X^u_s \big]
        + \nabla_1 f(X^u_s,u(X^u_s,s),s) \big) \, \mathrm{d}s
        + \nabla g(X^u_T) \Big| X^u_t \Big].
    \end{split}
    \end{talign}
    Analogously to equations \Cref{eq:BAM_perturbation}, if we consider the perturbation $u^\eps(x,t) = u(x,t) + \eps v(x,t)$, and let $\bar{u} = \mathrm{stopgrad}(u)$, we have that
    \begin{talign}
    \begin{split} \label{eq:AM_perturbation}
    \frac{\dd}{\dd\eps}\EE[\mathcal L_{\mathrm{AM}}(u^\eps;X^{\bar u})]\Big|_{\eps=0}
    &=
    \EE\Big[\int_0^T
    \Big\langle v(X_t^{\bar u},t),\,
    \nabla_u \tilde H\!\big(X_t^{\bar u},t;\,\bar u(X_t^{\bar u},t),\,
    \tilde{a}(t,X^{\bar u})
    \big)
    \Big\rangle\,\dd t\Big] \\ &= \EE\Big[\int_0^T
    \Big\langle v(X_t^{\bar u},t),\,
    \nabla_u \tilde H\!\big(X_t^{\bar u},t;\,\bar u(X_t^{\bar u},t),\,
    \mathbb{E}[\tilde{a}(t,X^{\bar u})|X_t^{\bar u}]
    \big)
    \Big\rangle\,\dd t\Big]
    \end{split}
    \end{talign}
    Hence, if $\hat{u}$ is such that the first variation of $u\mapsto \EE[\mathcal L_{\mathrm{AM}}(u;X^{\bar u})]$ at $\hat{u}$ is zero, then
    \begin{talign}
    \begin{split}
    0 &= \nabla_u \tilde H\!\big(X_t^{\hat u},t;\,\hat u(X_t^{\hat u},t),\,
    \mathbb{E}[\tilde{a}(t,X^{\hat u})|X_t^{\hat u}]
    \big) \\ &= \nabla_2 f(X_t^{\hat u}, \hat u(X_t^{\hat u},t),t) + \nabla_2 b(X_t^{\hat u}, \hat u(X_t^{\hat u},t),t)^{\top} \EE[\tilde{a}(t,X^{\hat u})|X_t^{\hat u}].
    \end{split}
    \end{talign}
    Hence, we have
    \begin{talign}
    \begin{split}
        &\nabla_x  \hat{u}(X^{\hat{u}}_t,t)^{\top}
        \big( \nabla_2 f(X_t^{\hat u}, \hat u(X_t^{\hat u},t),t)
        +
        \nabla_2 b(X_t^{\hat u}, \hat u(X_t^{\hat u},t),t)^{\top} \EE[\tilde{a}(t,X^{\hat u})|X_t^{\hat u}] \big)
        = 0, \\
        &\implies \mathbb{E}\big[\int_t^T \nabla_x  \hat{u}(X^{\hat{u}}_s,s)^{\top} \big( \nabla_2 f(X_s^{\hat u}, \hat u(X_s^{\hat u},s),s) + \nabla_2 b(X_s^{\hat u}, \hat u(X_s^{\hat u},s),s)^{\top} \EE[\tilde{a}(s,X^{\hat u})|X_s^{\hat u}] \big) \, \mathrm{d}s \big| X^{\hat{u}}_t \big] = 0.
        \label{eq:zero_equality}
    \end{split}
    \end{talign}
    If we add \eqref{eq:zero_equality} to the right-hand side of \eqref{eq:expected_lean_integral}, we obtain that $\mathbb{E}[\tilde{a}(t,X^{\hat{u}})|X^{\hat{u}}_{t}]$ also solves the following integral equation:
    \begin{talign}
    \begin{split} \label{eq:expected_lean_integral_derivation}
    &\mathbb{E}\big[\tilde{a}(t; X^{\hat u}) \big| X^{\hat u}_t \big] \\ &=
    \mathbb{E}\Big[ \int_t^T \big(
    (\nabla_1 b(X^{\hat u}_s, \hat u(X^{\hat u}_s,s),s) + \nabla_2 b(X_s^{\hat u}, \hat u(X_s^{\hat u},s),s) \nabla_x \hat{u}(X^{\hat{u}}_s,s) )^{\top}\, \mathbb{E}\big[ \tilde a(s;X^{\hat u}) \big| X^{\hat u}_s \big]
    \\ &\qquad\qquad + \nabla_1 f(X^{\hat u}_s, \hat u(X^{\hat u}_s,s),s) + \nabla_x  \hat{u}(X^{\hat{u}}_s,s)^{\top} \nabla_2 f(X_s^{\hat u}, \hat u(X_s^{\hat u},s),s) \big) \, \mathrm{d}s
    + \nabla g(X^{\hat u}_T) \Big| X^{\hat u}_t \Big] \\
    &=
    \mathbb{E}\Big[ \int_t^T \big(
    \nabla_x b(x, \hat u(x,s),s) \rvert_{x = X^{\hat u}_s}^{\top}\, \mathbb{E}\big[ \tilde a(s;X^{\hat u}) \big| X^{\hat u}_s \big]
    + \nabla_x f(x, \hat u(x,s),s) \rvert_{x = X^{\hat u}_s} \big) \, \mathrm{d}s
    + \nabla g(X^{\hat u}_T) \Big| X^{\hat u}_t \Big].
    \end{split}
    \end{talign}
    Now, observe that when $\sigma(x,u,t) := \sigma(t)$, the adjoint SDE \cref{eq:general_basic_adjoint}-\cref{eq:general_basic_adjoint_2} simplifies to the lean adjoint ODE:
    \begin{talign}
\begin{split}
    \frac{\mathrm{d} a_t}{\mathrm{d}t}
    &=
    -\nabla_x b(x,u(x,t),t)\big|_{x=X_t}^{\top}\, a_t
    -\nabla_x f(x,u(x,t),t)\big|_{x=X_t},
\end{split}
\label{eq:general_basic_adjoint_simplified}
\\
a_T &=\nabla_x g(X_T),
\label{eq:general_basic_adjoint_simplified_2}
\end{talign}
which, setting $u = \hat u$, means that $\mathbb{E}\big[a(t; X^{\hat u}) \big| X^{\hat u}_t \big]$ satisfies the following integral equation:
\begin{talign}
\begin{split} \label{eq:expected_basic_simplified}
    &\mathbb{E}\big[a(t; X^{\hat u}) \big| X^{\hat u}_t \big] \\
    &= \mathbb{E}\Big[ \int_t^T \big(
    \nabla_x b(x, \hat u(x,s),s) \rvert_{x = X^{\hat u}_s}^{\top}\, \mathbb{E}\big[ a(s;X^{\hat u}) \big| X^{\hat u}_s \big]
    + \nabla_x f(x, \hat u(x,s),s) \rvert_{x = X^{\hat u}_s} \big) \, \mathrm{d}s
    + \nabla g(X^{\hat u}_T) \Big| X^{\hat u}_t \Big],
\end{split}
\end{talign}

Comparing equation\Cref{eq:expected_lean_integral_derivation} with equation\Cref{eq:expected_basic_simplified}, we see that $\mathbb{E}\big[ \tilde a(t;X^{\hat u}) \big| X^{\hat u}_t \big]$ and $\mathbb{E}\big[ a(t;X^{\hat u}) \big| X^{\hat u}_t \big]$ satisfy the same integral equation. Applying Lemma \ref{prop:uniqueness}, we obtain that
\begin{talign} \label{eq:lean_equals_full_adjoint}
    \mathbb{E}\big[ \tilde a(t;X^{\hat u}) \big| X^{\hat u}_t \big] = \mathbb{E}\big[ a(t;X^{\hat u}) \big| X^{\hat u}_t \big].
\end{talign}

We now evaluate both the BAM and AM first variations at the critical point $u = \hat{u}$ (so that $\bar{u} = \hat{u}$ and all trajectory superscripts coincide). By equations~\cref{eq:BAM_perturbation} and~\cref{eq:BAM_tower},
\begin{talign}
\begin{split}
&\frac{\dd}{\dd\eps}\EE[\mathcal L_{\mathrm{BAM}}(\hat u + \eps v;X^{\hat u})]\Big|_{\eps=0}
\\ &=
\EE\Big[\int_0^T
\Big\langle v(X_t^{\hat u},t),\,
\nabla_u H\!\big(X_t^{\hat u},t;\,\hat u(X_t^{\hat u},t),\,
\mathbb{E}\big[ a(t;X^{\hat u}) \big| X^{\hat u}_t \big],\, \mathbb{E}\big[ A(t;X^{\hat u}) \big| X^{\hat u}_t \big] \big) \Big\rangle\,\dd t\Big]
\\ &= \EE\Big[\int_0^T
\Big\langle v(X_t^{\hat u},t),\,
\nabla_u H\!\big(X_t^{\hat u},t;\,\hat u(X_t^{\hat u},t),\,
\mathbb{E}\big[ \tilde a(t;X^{\hat u}) \big| X^{\hat u}_t \big]\big) \Big\rangle\,\dd t\Big]
= \frac{\dd}{\dd\eps}\EE[\mathcal L_{\mathrm{AM}}(\hat u + \eps v;X^{\hat u})]\Big|_{\eps=0}.
\end{split}
\end{talign}
Here, the second equality uses that since $\sigma(x,u,t) = \sigma(t)$, the second-order adjoint term drops out of $\nabla_u H$:
\begin{talign}
\begin{split}
&\nabla_u H\!\big(X_t^{\hat u},t;\,\hat u(X_t^{\hat u},t),\,
\mathbb{E}\big[ a(t;X^{\hat u}) \big| X^{\hat u}_t \big],\, \mathbb{E}\big[ A(t;X^{\hat u}) \big| X^{\hat u}_t \big] \big) = \nabla_u H\!\big(X_t^{\hat u},t;\,\hat u(X_t^{\hat u},t),\,
\mathbb{E}\big[ a(t;X^{\hat u}) \big| X^{\hat u}_t \big] \big) \\ &= \nabla_u H\!\big(X_t^{\hat u},t;\,\hat u(X_t^{\hat u},t),\,
\mathbb{E}\big[ \tilde a(t;X^{\hat u}) \big| X^{\hat u}_t \big]\big),
\end{split}
\end{talign}
where the last step uses~\eqref{eq:lean_equals_full_adjoint}.
The rest of the proof is the same as the end of the proof of \Cref{thm:general_hamiltonian_basic_adjoint_matching}.

\begin{lemma} \label{prop:uniqueness}
    Let $X^u$ be a solution of the SDE~\cref{eq:general_SDE}, and let
    $\mathcal{A}: \mathbb{R}^d \times [0,T] \to \mathbb{R}^{d\times d}$
    satisfy $\sup_{t \in [0,T]} \mathbb{E}[\|\mathcal{A}(X^u_t,t)\|^2] < +\infty$.
    Consider the integral equation
    \begin{talign}
         Y_t = \mathbb{E}\Big[\int_t^T \big( \mathcal{A}(X_s^u, s) Y_s + c(X_s^u, s) \big) \dd s  + \psi(X^u_T) \Big| X^u_t\Big],
    \end{talign}
    where $c:\mathbb{R}^d\times[0,T]\to\mathbb{R}^d$ and $\psi:\mathbb{R}^d\to\mathbb{R}^d$ are measurable with appropriate integrability, and $Y_t$ is understood as a function of $X^u_t$.
    Then this equation has a unique solution.
\end{lemma}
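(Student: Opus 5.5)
The plan is to treat this as a linear Volterra-type fixed-point problem and to prove uniqueness by a Grönwall argument on the difference of two solutions; existence then follows from a Picard iteration in the same norm. By linearity, if $Y^{(1)}$ and $Y^{(2)}$ both solve the equation, then $\Delta_t := Y^{(1)}_t - Y^{(2)}_t$ solves the homogeneous equation
\begin{talign*}
\Delta_t = \mathbb{E}\Big[\int_t^T \mathcal{A}(X_s^u,s)\Delta_s\,\dd s \,\Big|\, X_t^u\Big],
\end{talign*}
since the $c$ and $\psi$ terms cancel. It therefore suffices to show that $\Delta\equiv 0$.

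To do this, take norms and apply conditional Jensen to get $\|\Delta_t\| \le \mathbb{E}[\int_t^T \|\mathcal{A}(X_s^u,s)\|\,\|\Delta_s\|\,\dd s \mid X_t^u]$. Then take expectations, using the tower property to remove the conditioning. The natural quantity to control is $\phi(t) := \mathbb{E}\|\Delta_t\|$, or better $\phi(t) := \mathbb{E}\|\Delta_t\|^2$, but the bound involves the product $\|\mathcal{A}\|\,\|\Delta_s\|$ of two random quantities. My first attempt will be Cauchy--Schwarz, which gives $\mathbb{E}[\|\mathcal{A}_s\|\|\Delta_s\|] \le C\,(\mathbb{E}\|\Delta_s\|^2)^{1/2}$ with $C^2 := \sup_s \mathbb{E}\|\mathcal{A}(X^u_s,s)\|^2$. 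That bound closes for $\mathbb{E}\|\Delta_t\|$ on the left only if we control second moments on the right, so the $L^1$ and $L^2$ levels do not match.

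This mismatch is the main obstacle, because the hypothesis only gives $L^2$ control of $\mathcal{A}$, not $L^\infty$ control. I would first try to strengthen to a bounded $\mathcal{A}$, i.e.\ $\|\mathcal{A}\|_\infty \le K$, which holds in the application whenever $\nabla_x b$ and $\nabla_x\hat u$ are bounded. In that case conditional Jensen and squaring give
\begin{talign*}
\mathbb{E}\|\Delta_t\|^2 \le K^2 (T-t)\int_t^T \mathbb{E}\|\Delta_s\|^2\,\dd s,
\end{talign*}
and the backward Grönwall inequality yields $\mathbb{E}\|\Delta_t\|^2 = 0$ for all $t$, hence $\Delta_t = 0$ a.s. If I want to keep only the stated $L^2$ hypothesis, I would instead work in a weighted or higher-moment norm, or use a localization. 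Concretely, I would truncate on $\{\|\mathcal{A}\|\le K\}$, run the argument above on the truncated part, and control the tail with Cauchy--Schwarz while assuming $\Delta$ lies in a class with bounded moments (e.g.\ $\sup_t \mathbb{E}\|\Delta_t\|^4<\infty$). Uniqueness would then be asserted within that integrability class, which is what the application in the proof of \Cref{thm:general_hamiltonian_lean_adjoint_matching} needs, since both conditional adjoint expectations have finite moments under the standing polynomial-growth assumptions.

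For existence, I would iterate $Y^{n+1}_t = \mathbb{E}[\int_t^T (\mathcal{A}Y^n_s + c_s)\,\dd s + \psi(X_T^u)\mid X_t^u]$ in the norm $\sup_t e^{\beta t}(\mathbb{E}\|Y_t\|^2)^{1/2}$. With $\beta$ large, the same estimate makes this map a contraction. In the actual application existence is not needed anyway, since $\mathbb{E}[a\mid X_t]$ is exhibited as a solution. Finally, I would note that each $Y_t$ is a function of $X_t^u$ by construction, so the fixed point has the required Markov form.
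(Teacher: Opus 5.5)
Your skeleton (subtract two solutions, push the norm inside the conditional expectation, take expectations, apply backward Gr\"onwall) is the same as the paper's. The $L^1$/$L^2$ mismatch you flag is real, and it is exactly where the paper's proof breaks. With $\phi(s)=\mathbb{E}\|\Delta_s\|$, the paper asserts $\mathbb{E}[\|\mathcal{A}(X_s^u,s)\|\,\|\Delta_s\|]\le(\mathbb{E}\|\mathcal{A}(X_s^u,s)\|^2)^{1/2}\phi(s)$. Cauchy--Schwarz only gives $(\mathbb{E}\|\Delta_s\|^2)^{1/2}$ on the right. The asserted bound can fail, for example when $\|\mathcal{A}_s\|=\|\Delta_s\|$ is non-constant, which is possible since both are functions of $X_s^u$.

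Under $\|\mathcal{A}\|_\infty\le K$ your argument is correct. In that case the paper's $L^1$ Gr\"onwall also becomes valid verbatim with $C=K$, and it needs only $\mathbb{E}\|\Delta_t\|<\infty$ rather than second moments. The same bound is what makes your Picard contraction work for existence, which the paper does not address at all. But this proves a different lemma: in the application it requires bounded derivatives, not the paper's polynomial growth.

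The gap is your fallback that keeps the stated $L^2$ hypothesis. Truncating at level $K$ gives $\phi(t)\le K\int_t^T\phi(s)\,\dd s+(T-t)\varepsilon_K$. The hypothesis only yields $\mathbb{P}(\|\mathcal{A}_s\|>K)\le C^2/K^2$, so $\varepsilon_K=O(K^{-1/2})$ under your fourth-moment assumption, and still $O(K^{-1})$ even for bounded $\Delta$. Gr\"onwall then multiplies this by $e^{K(T-t)}$, so sending $K\to\infty$ gives nothing.

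This failure is structural, not a matter of bookkeeping. Take events $E_s$ with $\mathbb{P}(E_s)=p(s)=C^2(T-s)^2/4$ (assuming $CT\le 2$), and set $\|\Delta_s\|=\mathbf{1}_{E_s}$ and $\|\mathcal{A}_s\|=C\,p(s)^{-1/2}\mathbf{1}_{E_s}$. Then $\mathbb{E}\|\mathcal{A}_s\|^2=C^2$ and $\|\Delta_s\|\le 1$. Moreover, both $\mathbb{E}\|\Delta_t\|\le\int_t^T\mathbb{E}[\|\mathcal{A}_s\|\,\|\Delta_s\|]\,\dd s$ (with equality) and the unbounded analogue of your squared inequality, $\mathbb{E}\|\Delta_t\|^2\le(T-t)\int_t^T\mathbb{E}[\|\mathcal{A}_s\|^2\|\Delta_s\|^2]\,\dd s$, hold, yet $\Delta\not\equiv0$. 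So the scalar moment inequalities that you and the paper reduce to cannot force $\Delta\equiv 0$ using only second moments of $\mathcal{A}$.

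To allow unbounded $\mathcal{A}$ you need a different ingredient. One option is exponential integrability of $\int_0^T\|\mathcal{A}(X_s^u,s)\|\,\dd s$, combined with the linear-BSDE structure:
\begin{itemize}
\item By the Markov property, $N_t=\Delta_t+\int_0^t\mathcal{A}_s\Delta_s\,\dd s$ is a martingale.
\item With $\dot\Gamma_t=\Gamma_t\mathcal{A}_t$ and $\Gamma_0=I$, the process $\Gamma_t\Delta_t$ is then a local martingale with zero terminal value.
\item Exponential moments make it a true martingale, so $\Gamma_t\Delta_t=0$, and hence $\Delta\equiv 0$ because $\Gamma_t$ is invertible.
\end{itemize}
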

\begin{proof}
    Let $Y^1$, $Y^2$ be two solutions. Their difference satisfies
    \begin{talign}
        Y^1_t - Y^2_t = \mathbb{E}\Big[\int_t^T \mathcal{A}(X_s^u, s) \big(Y^1_s - Y^2_s\big) \dd s \Big| X^u_t\Big].
    \end{talign}
    Define $\phi(t) := \mathbb{E}\big[\|Y^1_t - Y^2_t\|\big]$.
    Taking norms inside the conditional expectation and then full expectations:
    \begin{talign}
    \begin{split}
        \phi(t) &\leq \mathbb{E}\Big[\int_t^T \|\mathcal{A}(X_s^u, s)\| \cdot \|Y^1_s - Y^2_s\| \, \dd s \Big]
        = \int_t^T \mathbb{E}\big[\|\mathcal{A}(X_s^u, s)\| \cdot \|Y^1_s - Y^2_s\|\big] \, \dd s \\
        &\leq \int_t^T \big(\mathbb{E}\big[\|\mathcal{A}(X_s^u, s)\|^2\big]\big)^{1/2} \phi(s) \, \dd s
        \leq C \int_t^T \phi(s) \, \dd s,
    \end{split}
    \end{talign}
    where $C := \sup_{s \in [0,T]} \big(\mathbb{E}[\|\mathcal{A}(X_s^u,s)\|^2]\big)^{1/2} < +\infty$.
    By the backward Gr\"onwall inequality, $\phi(t) = 0$ for all $t \in [0,T]$.
    Hence $Y^1_t = Y^2_t$ a.s.\ for every $t$.
\end{proof}

\section{Stochastic Maximum Principle}

\subsection{Value function representation of the SMP adjoint processes}
\label{app:ito_derivation}
We first examine the SMP adjoint processes (\ref{eq:smp_x_firstorder})--(\ref{eq:smp_Hmax_firstorder}) when the diffusion is control-independent, and prove the identities stated in \eqref{eq:p_star_q_star}
\begin{equation*}
    p^*_t = \nabla_x V(X^*_t,t), \qquad q^*_t = \nabla^2_x V(X^*_t,t)\, \sigma(X_t^{*},t),
\end{equation*}
where $V(x,t) := J(u^*;x,t)$ is the value function under the optimal control $u^*$. We assume $V$ is sufficiently smooth so that the computations below are justified. The argument proceeds in three steps: (1) apply It\^o's formula to $\nabla_x V(X_t^*,t)$ to identify the diffusion coefficient, (2) use the HJB equation to simplify the drift, and (3) invoke BSDE uniqueness.

Throughout, we write $b^* := b(X_t^*,u_t^*,t)$, $\sigma^* := \sigma(X_t^*,t)$, and $\Sigma^* := \sigma^*(\sigma^*)^\top$ for brevity.

\paragraph{Step 1: It\^o's formula.}
Define $P_t := \nabla_x V(X_t^*,t) \in \RR^d$. Applying It\^o's formula componentwise to $\partial_{x_i} V(X_t^*,t)$ gives
\begin{talign}
\label{eq:ito_Pt}
    \dd P_t
    = \bigg[
        \partial_t \nabla_x V
        + \nabla_x^2 V \, b^*
        + \frac12 \, \mathcal{T}
    \bigg] \dd t
    + \nabla_x^2 V \, \sigma^* \, \dd B_t,
\end{talign}
where $\mathcal{T} \in \RR^d$ has components $\mathcal{T}_i := \sum_{k,\ell} \partial_{x_k x_\ell x_i} V(X_t^*,t) \, \Sigma^*_{k\ell}$, and all functions are evaluated at $(X_t^*,t)$. Setting $Q_t := \nabla_x^2 V(X_t^*,t)\, \sigma(X_t^*,t)$, we read off the diffusion coefficient directly:
\begin{talign}
\label{eq:Qt_identification}
    Q_t = \nabla_x^2 V(X_t^*,t)\, \sigma(X_t^*,t).
\end{talign}

\paragraph{Step 2: Simplifying the drift via the HJB equation.}
The value function satisfies the HJB equation
\begin{talign}
\label{eq:hjb_appendix}
    0 = \partial_t V + \inf_{u} \Big\{
    f(x,u,t)
    + \langle \nabla_x V,\, b(x,u,t) \rangle
    + \tfrac12 \Tr\!\big(\Sigma(x,t)\, \nabla_x^2 V\big)
    \Big\},
\end{talign}
where $\Sigma(x,t) := \sigma(x,t)\sigma(x,t)^\top$. Evaluating at the optimal control $u^*$ and differentiating with respect to $x$, we obtain
\begin{talign}
\label{eq:hjb_diff}
    0 = \partial_t \nabla_x V
    + \nabla_1 f^*
    + (\nabla_1 b^*)^\top \nabla_x V
    + \nabla_x^2 V \, b^*
    + \frac12 \nabla_x \Tr\!\big(\Sigma^* \nabla_x^2 V\big),
\end{talign}
where $\nabla_1 f^* := \nabla_1 f(X_t^*,u_t^*,t)$ and $\nabla_1 b^* := \nabla_1 b(X_t^*,u_t^*,t)$ denote partial derivatives with respect to the first argument $x$. Expanding the last term componentwise:
\begin{talign}
\label{eq:trace_deriv}
    \Big[\nabla_x \Tr\!\big(\Sigma \nabla_x^2 V\big)\Big]_i
    = \sum_{k,\ell} (\partial_{x_i} \Sigma_{k\ell})\, \partial_{x_k x_\ell} V
    + \underbrace{\sum_{k,\ell} \Sigma_{k\ell}\, \partial_{x_k x_\ell x_i} V}_{\mathcal{T}_i}.
\end{talign}
Solving \eqref{eq:hjb_diff} for $\partial_t \nabla_x V$ and substituting into the Itô drift~\eqref{eq:ito_Pt}, the $\nabla_x^2 V \, b^*$ terms cancel and the $\mathcal{T}$ terms cancel, leaving
\begin{talign}
\label{eq:Pt_drift_simplified}
    \dd P_t
    = -\bigg[
        \nabla_1 f^*
        + (\nabla_1 b^*)^\top P_t
        + \frac12 \sum_{k,\ell} (\nabla_x \Sigma^*_{k\ell})\, \partial_{x_k x_\ell} V
    \bigg] \dd t
    + Q_t \, \dd B_t.
\end{talign}

\paragraph{Step 3: Matching with the SMP adjoint BSDE.}
It remains to show that the drift in \eqref{eq:Pt_drift_simplified} equals $-\nabla_x \mathcal{H}(X_t^*,t;u_t^*,P_t,Q_t)$, where
\begin{talign}
    \nabla_x \mathcal{H}(x,t;u,p,q) = \nabla_1 f(x,u,t) + \nabla_1 b(x,u,t)^\top p + \nabla_x \Tr\!\big(\sigma(x,t)^\top q\big),
\end{talign}
and the last term differentiates $\sigma(x,t)$ with respect to $x$ while holding $q$ fixed. Evaluating at $q = Q_t = \nabla_x^2 V \, \sigma$:
\begin{talign}
    \Big[\nabla_x \Tr\!\big(\sigma^\top Q_t\big)\Big]_i
    &= \sum_{j,k} (\partial_{x_i} \sigma_{jk})\, (Q_t)_{jk}
    = \sum_{j,k} (\partial_{x_i} \sigma_{jk}) \sum_{\ell} (\partial_{x_j x_\ell} V)\, \sigma_{\ell k}.
\end{talign}
Using the identity $\partial_{x_i} \Sigma_{j\ell} = \sum_k [(\partial_{x_i} \sigma_{jk})\, \sigma_{\ell k} + \sigma_{jk}\, (\partial_{x_i} \sigma_{\ell k})]$ together with the symmetry $\partial_{x_j x_\ell} V = \partial_{x_\ell x_j} V$, one verifies that
\begin{talign}
\label{eq:trace_sigma_q_identity}
    \nabla_x \Tr\!\big(\sigma^\top Q_t\big) = \frac12 \sum_{k,\ell} (\nabla_x \Sigma_{k\ell})\, \partial_{x_k x_\ell} V.
\end{talign}
Substituting \eqref{eq:trace_sigma_q_identity} into \eqref{eq:Pt_drift_simplified} shows that $P_t$ satisfies the BSDE
\begin{talign}
    \dd P_t = -\nabla_x \mathcal{H}\big(X_t^*,t;u_t^*,P_t,Q_t\big)\,\dd t + Q_t\,\dd B_t, \qquad P_T = \nabla_x g(X_T^*),
\end{talign}
where the terminal condition follows from $V(x,T) = g(x)$. This is exactly the SMP adjoint BSDE~\eqref{eq:smp_p_firstorder}. By uniqueness of the adapted solution, we conclude $p_t^* = P_t = \nabla_x V(X_t^*,t)$ and $q_t^* = Q_t = \nabla_x^2 V(X_t^*,t)\,\sigma(X_t^*,t)$.

\subsection{Second-order statement for control-dependent diffusions}
\label{app:second_order_smp}
To state the stochastic maximum principle (SMP) in full generality, we first
recall the \emph{first-order Hamiltonian}
\begin{talign}
\mathcal{H}(x,t;u,p,q)
=
f(x,u,t)
+
\langle p, b(x,u,t) \rangle
+
\Tr\!\big(\sigma(x,u,t)^{\top} q\big),
\
\text{for } (x,t,u,p,q)\in\RR^d\times[0,T]\times U
\times\RR^d\times\RR^{d\times m}.
\end{talign}
In this case, $U \subseteq \mathbb{R}^k$ is the control domain.
Let $u_t^* := u^*(X_t^*,t)$ be an optimal control and $X_t^*$ its associated
state process. The \emph{first-order adjoint processes}
$(p_t^*,q_t^*)$ are defined as the solution of the backward stochastic
differential equation
\begin{empheq}[left=\empheqlbrace]{align}
\label{eq:smp_first_adjoint}
\dd p_t^*
&=
-
\nabla_x \mathcal{H}\big(X_t^*,t;u_t^*,p_t^*,q_t^*\big)\,\dd t
+
q_t^*\,\dd B_t,
\\
p_T^*
&=
\nabla_x g(X_T^*).
\end{empheq}
This backward equation remains the same as \eqref{eq:smp_p_firstorder} in \Cref{sec:smp} when
$\sigma$ does not depend on the control $u$.

\vspace{0.5em}
\paragraph{Second-order adjoint.}
When the control domain $U$ is nonconvex \emph{and} the diffusion coefficient depends on
the control, the first-order adjoint alone is not sufficient to characterize
optimality. In this case, the SMP introduces a \emph{second-order adjoint}
$(P_t^*,Q_t^*)$, where $P_t^*\in\mathbb{S}^d$ is symmetric and
$Q_t^*=(Q_t^{*,1},\dots,Q_t^{*,m})$ with $Q_t^{*,k}\in\RR^{d\times d}$.
The second-order adjoint satisfies the matrix-valued BSDE
\begin{talign}
\begin{split}
\dd P_t^*
&=
-
\Big(
\nabla_1 b(X_t^*,u(X_t^*,t),t)^{\top} P_t^* + P_t^* \nabla_1 b(X_t^*,u(X_t^*,t),t)
\\ &\qquad\quad +
\sum_{k=1}^m \nabla_1 \sigma_{\cdot k}(X_t^*,u(X_t^*,t),t)^\top P_t^* \nabla_1 \sigma_{\cdot k}(X_t^*,u(X_t^*,t),t)
\\ &\qquad\quad + \sum_{k=1}^m
\big(
\nabla_1 \sigma_{\cdot k}(X_t^*,u(X_t^*,t),t)^\top Q_t^{*,k} + Q_t^{*,k} \nabla_1 \sigma_{\cdot k}(X_t^*,u(X_t^*,t),t)
\big) \\ &\qquad\quad +
\nabla^2_{x} \mathcal{H}(X_t^*,t;u_t^*,p_t^*,q_t^*)
\Big)\dd t
\end{split}
\\
P_T^*
&=
\nabla_x^2 g(X_T^*).
\end{talign}
where all derivatives are evaluated along $(X_t^*,u_t^*)$.

\vspace{0.5em}
\paragraph{Generalized Hamiltonian and optimality condition.}
Using the second-order adjoint, define the \emph{generalized Hamiltonian}
\begin{talign}
\label{eq:smp_generalized_hamiltonian}
\mathcal{H}_{\mathrm{SMP}}(x,t;u)
=
\mathcal{H}(x,t;u,p_t^*,q_t^*)
+
\frac12
\Tr\!\Big(
\big(\sigma(x,u,t)-\sigma(x,u_t^*,t)\big)^{\top}
P_t^*
\big(\sigma(x,u,t)-\sigma(x,u_t^*,t)\big)
\Big).
\end{talign}
The stochastic maximum principle \citep[Theorem~3.2]{yong2012stochastic} states that, for almost every $t$ and
almost surely with respect to $X^*$,
\begin{talign}
\label{eq:smp_general_optimality}
\mathcal{H}_{\mathrm{SMP}}(X_t^*,t;u_t^*)
=
\min_{u\in\RR^k}
\mathcal{H}_{\mathrm{SMP}}(X_t^*,t;u).
\end{talign}

\vspace{0.5em}
\paragraph{First-order SMP as a special case.}
If the diffusion coefficient $\sigma$ does not depend on the control, \emph{or} if the control domain $U \subseteq \mathbb{R}^d$ is convex so that second-order variations vanish\footnote{
When the control domain $U$ is convex, admissible perturbations of an optimal control can be taken as convex combinations rather than spike variations. In this case, even if the diffusion coefficient depends on the control, first-order variations suffice and the stochastic maximum principle yields a first-order variational inequality
$\langle \nabla_u \mathcal{H}(X_t^*,t;u_t^*,p_t^*,q_t^*),u-u_t^*\rangle \ge 0$ for all $u\in U$, which is equivalent (under standard convexity assumptions on $\mathcal{H}$ in $u$) to pointwise minimization of the Hamiltonian over $U$. By contrast, when $U$ is nonconvex and the diffusion depends on the control, spike variations produce $O(\varepsilon)$ quadratic diffusion terms, necessitating a second-order
adjoint and a generalized Hamiltonian.},
the quadratic correction term in \eqref{eq:smp_generalized_hamiltonian} disappears. In this case, the SMP reduces to the first-order condition
\begin{talign}
\mathcal{H}\big(X_t^*,t;u_t^*,p_t^*,q_t^*\big)
=
\min_{u\in\RR^k}
\mathcal{H}\big(X_t^*,t;u,p_t^*,q_t^*\big),
\end{talign}
together with the state equation and the first-order adjoint BSDE
\eqref{eq:smp_first_adjoint}. This is the setting considered in the main text of the paper.

\section{Proof of \Cref{thm:am_msa}}

To show that $\frac{\dd}{\dd \tau}u^{\tau} = - \frac{\dd}{\dd u} \mathbb{E}\big[ \int_0^T \tilde{H}(X^{\tau}_t,t;u(X_t^{\tau},t),p^{\tau}_t) \, \dd t \big] \rvert_{u = u^{\tau}}$ and $\frac{\dd}{\dd \tau}u^{\tau} = - \frac{\dd}{\dd u} \mathbb{E}\big[ \mathcal L_{\mathrm{AM}}(u;X^{\bar u}) \big] \rvert_{u = u^{\tau}}$ are the same equation, it suffices to show that $\mathbb{E}\big[ \int_0^T \tilde{H}(X^{\tau}_t,t;u(X_t^{\tau},t),p^{\tau}_t) \, \dd t \big] = \mathbb{E}\big[ \mathcal L_{\mathrm{AM}}(u;X^{u^{\tau}}) \big]$ for all functions $u : \mathbb{R}^d \times [0,T] \to \mathbb{R}$. By the tower property, we have that
\begin{talign}
\mathbb{E}\big[ \int_0^T \tilde{H}(X^{\tau}_t,t;u(X_t^{\tau},t),p^{\tau}_t) \, \dd t \big] = \mathbb{E}\big[ \int_0^T \tilde{H}(X^{\tau}_t,t;u(X_t^{\tau},t),\mathbb{E}\big[p^{\tau}_t | X^{\tau}_t]) \, \dd t \big]
\end{talign}
And by the definition of $\mathcal L_{\mathrm{AM}}(u;X^{u^{\tau}})$, the identity $X^{\tau} = X^{u^{\tau}}$, and the tower property, we obtain the following:
\begin{talign}
\begin{split}
    &\mathbb{E}\big[ \mathcal L_{\mathrm{AM}}(u;X^{u^{\tau}}) \big] = \mathbb{E}\big[ \int_0^T \tilde H \big( X_t^{u^{\tau}},t;\, u(X_t^{u^{\tau}},t),\, \tilde a(t;X^{u^{\tau}}) \big)\,\dd t \big] \\ &= \mathbb{E}\big[ \int_0^T \tilde{H} \big( X_t^{\tau},t;\, u(X_t^{\tau},t),\, \tilde a(t;X^{\tau}) \big)\,\dd t \big] = \mathbb{E}\big[ \int_0^T \tilde{H} \big( X_t^{\tau},t;\, u(X_t^{\tau},t),\, \mathbb{E}[\tilde a(t;X^{\tau})| X^{\tau}_t] \big)\,\dd t \big]
\end{split}
\end{talign}
Hence, it suffices to show the following equality:
\begin{talign}
    \mathbb{E}[p^{\tau}_t | X^{\tau}_t] = \mathbb{E}[\tilde{a}(t; X^{\tau})|X^{\tau}_t].
\label{eq:unbiased_estimator}
\end{talign}

Using the argument in \eqref{eq:expected_lean_integral}, $\mathbb{E}\big[\tilde{a}(t; X^{\tau}) \big| X^{\tau}_t \big]$ satisfies the integral equation
    \begin{talign}
    \begin{split} \label{eq:expected_lean_integral_2}
    \mathbb{E}\big[\tilde{a}(t; X^{\tau}) \big| X^{\tau}_t \big] &=
    \mathbb{E}\Big[ \int_t^T \big(
        \nabla_1 b(X^{\tau}_s,u^{\tau}(X^{\tau}_s,s),s)^{\top}\, \mathbb{E}\big[ \tilde a(s;X^{\tau}) \big| X^{\tau}_s \big]
        + \nabla_1 f(X^{\tau}_s,u^{\tau}(X^{\tau}_s,s),s) \big) \, \mathrm{d}s
        + \nabla g(X^{\tau}_T) \Big| X^{\tau}_t \Big].
    \end{split}
    \end{talign}

Similarly, we can write an integral equation for the evolution of $p^{\tau}_t$:
\begin{talign}
    p^{\tau}_t = \int_t^T \big((p^{\tau}_s)^\top \nabla_1 b(X_s^{\tau}, u^{\tau}(X_s^{\tau},s), s) + \nabla_1 f(X_s^{\tau}, u^{\tau}(X_s^{\tau},s), s) \big) \dd s - \int_t^T q_s^{\tau} \dd B_s + \nabla g(X^{\tau}_T).
\end{talign}
Taking conditional expectations of the integral form of the BSDE defining $p_t^\tau$ with respect to $X_t^\tau$, and using the tower property together with the martingale property of the stochastic integral, we obtain
\begin{talign} \label{eq:integral_tilde_p_2}
    \mathbb{E}[p^{\tau}_t | X^{\tau}_t]
    =
    \mathbb{E}\bigg[\int_t^T \Big(\mathbb{E}[p^{\tau}_s | X^{\tau}_s]^\top \nabla_1 b(X_s^{\tau}, u^{\tau}(X_s^{\tau},s), s) + \nabla_1 f(X_s^{\tau}, u^{\tau}(X_s^{\tau},s), s) \Big)\,\dd s + \nabla g(X^{\tau}_T) \,\Big|\, X^{\tau}_t \bigg].
\end{talign}
That is, $\mathbb{E}[p_t^\tau | X_t^\tau]$ satisfies the same state-conditioned integral equation above as the lean adjoint conditional expectation $\mathbb{E}[\tilde{a}(t; X^{\tau}) | X^{\tau}_t]$ in equation~\cref{eq:expected_lean_integral_2} with $u = u^{\tau}$.
Thus, by Lemma \ref{prop:uniqueness} we obtain that
\begin{talign}
    \mathbb{E}[p^{\tau}_t | X^{\tau}_t] = \mathbb{E}[\tilde{a}(t; X^{\tau})|X^{\tau}_t],
\end{talign}
which concludes the proof.

\subsection{Second proof approach in the path integral setting: using the Feynman-Kac formula}

In path integral SOC, or quadratic-cost control-affine SOC, we have that $b(x,u,t) = \bar{b}(x,t) + \sigma(x,t) u$ and $f(x,u,t) = \bar{f}(x,t) + \frac{1}{2} \|u(x,t)\|^2$.
We give another explanation to~\eqref{eq:unbiased_estimator} from the viewpoint of the Feynman-Kac formula (\Cref{thm:feynman_kac_linear_bsde}). For the backward stochastic equation for $p^{\tau}_t$, the solution $p^{\tau}_t$ can be written as
\begin{talign}
    p^{\tau}_t = \EE\left[\Phi_{t,T}\,\nabla_x g(X_T^{\tau})+ \int_t^T \Phi_{t,s}\,\nabla_x \bar{f}(X_s^{\tau}, s)\dd s \;\middle|\; \mathcal{F}_t
    \right],
\end{talign}
where $\Phi_{t,s}$ is the fundamental matrix of $\dd\Phi_{t,s}/\dd s = \nabla_x \bar{b}(X_s^{\tau},s)\,\Phi_{t,s}$, $\Phi_{t,t}=I$.

Therefore, given a trajectory $\{X_t^{\tau}\}_{t=0}^T$ satisfying\cref{eq:smp_x},
\begin{equation*}
    \Phi_{t,T}\,\nabla_x g(X_T^{\tau})+ \int_t^T \Phi_{t,s}\,\nabla_x \bar{f}(X_s^{\tau}, s)\dd s
\end{equation*}
is an \textit{unbiased estimator} of $\EE[p^{\tau}_t|X_t^{\tau}]$. Meanwhile, it is straightforward to verify that
\begin{talign}
    \tilde{a}(t;X^{\tau}) = \Phi_{t,T}\,\nabla_x g(X_T^{\tau})+ \int_t^T \Phi_{t,s}\,\nabla_x \bar{f}(X_s^{\tau}, s)\dd s.
\end{talign}
Therefore we have
\begin{talign}
    \mathbb{E}[p^{\tau}_t | X^{\tau}_t] = \mathbb{E}[\tilde{a}(t; X^{\tau})|X^{\tau}_t].
\end{talign}
From this perspective, having a good surrogate for the diffusion term  $q_t^{\tau}$  may lead to a better unbiased estimator with reduced variance.

\begin{theorem}[Feynman--Kac representation for a linear BSDE]
\label{thm:feynman_kac_linear_bsde}
Let $(X_t)_{t\in[0,T]}$ solve the forward SDE
\begin{talign}
    \dd X_t = b(X_t,t)\,\dd t + \sigma(X_t,t)\,\dd B_t .
\end{talign}
Assume that $A(\cdot,\cdot)$ and $c(\cdot,\cdot)$ are progressively measurable and bounded, and that
$\psi(X_T)\in L^2(\Omega)$.
Then there exists a unique adapted solution $(Y,Z)\in\mathcal S^2\times\mathcal H^2$ to the linear BSDE
\begin{talign} \label{eq:BSDE}
\begin{cases}
    \dd Y_t = -\big(A(X_t,t)^\top Y_t + c(X_t,t)\big)\,\dd t + Z_t\,\dd B_t,\\
    Y_T = \psi(X_T).
\end{cases}
\end{talign}
Moreover, the process $Y_t$ admits the representation
\begin{talign}
    Y_t
    =
    \EE\!\left[
        \Phi_{t,T}\,\psi(X_T)
        +
        \int_t^T \Phi_{t,s}\,c(X_s,s)\,\dd s
        \;\middle|\;
        \mathcal{F}_t
    \right],
\end{talign}
where $\Phi_{t,s}$ is the \emph{fundamental matrix} (state-transition matrix) solving
\begin{talign}
    \frac{\dd}{\dd s}\Phi_{t,s} = A(X_s,s)\,\Phi_{t,s}, \qquad \Phi_{t,t} = I.
\end{talign}
When $d=1$, this reduces to the scalar exponential $\Phi_{t,s} = \exp\!\big(\int_t^s A(X_\tau,\tau)\,\dd\tau\big)$.
The process $Z_t$ is uniquely determined by $Y_t$ through the martingale representation theorem.
\end{theorem}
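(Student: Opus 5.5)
The plan is to split the statement into two parts: well-posedness of the linear BSDE, which I will obtain from the Pardoux--Peng theory for Lipschitz drivers, and the explicit representation, which I will obtain from an integrating-factor (It\^o product) argument. Throughout, $(\mathcal F_t)$ is the augmented filtration generated by $B$; martingale representation requires this, and the statement implicitly assumes it.

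\textbf{Step 1 (existence and uniqueness).} The generator is $F(s,y)=A(X_s,s)^\top y+c(X_s,s)$. It is progressively measurable and uniformly Lipschitz in $y$ with constant $\|A\|_\infty$. Since $c$ is bounded, $F(\cdot,0)\in\mathcal H^2$, and by assumption the terminal value satisfies $\psi(X_T)\in L^2(\Omega)$. I would construct the solution by a Picard map on $\mathcal S^2\times\mathcal H^2$:
\begin{itemize}
\item given $(y,z)$, set $Y_t=\EE[\psi(X_T)+\int_t^T F(s,y_s)\,\dd s\mid\mathcal F_t]$;
\item obtain $Z$ from the martingale representation of the $\mathcal F_t$-martingale $\EE[\psi(X_T)+\int_0^T F(s,y_s)\,\dd s\mid \mathcal F_t]$.
\end{itemize}
I then show that this map is a contraction in the weighted norm $\EE\int_0^T e^{\beta s}(|Y_s|^2+\|Z_s\|^2)\,\dd s$ for $\beta$ large. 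This follows from It\^o's formula applied to $e^{\beta s}|\delta Y_s|^2$ together with Young's inequality. The fixed point exists, and the Burkholder--Davis--Gundy inequality upgrades it from $\mathcal H^2$ to $\mathcal S^2$. Uniqueness follows from the same It\^o estimate applied to the difference of two solutions, closed by Gr\"onwall's inequality. I expect this step to be routine and would cite Pardoux--Peng or \cite{pardoux2005backward} for the details.

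\textbf{Step 2 (representation).} Fix $t$ and let $M_s$, $s\ge t$, be a pathwise matrix solution of $\dd M_s/\dd s=M_sA(X_s,s)^\top$ with $M_t=I$. Its transpose solves $\dd M_s^\top/\dd s=A(X_s,s)M_s^\top$, so $M_s^\top$ is the fundamental matrix $\Phi_{t,s}$ of the statement. Because $A$ is bounded, $\|M_s\|\le e^{\|A\|_\infty (s-t)}$, and $M$ is a finite-variation, $\mathcal F_s$-adapted process. By the It\^o product rule, with no bracket term since $M$ has finite variation,
\[
\dd(M_sY_s)=M_sA^\top Y_s\,\dd s-M_s(A^\top Y_s+c_s)\,\dd s+M_sZ_s\,\dd B_s=-M_s c_s\,\dd s+M_sZ_s\,\dd B_s .
\]
Integrating from $t$ to $T$ and using $M_t=I$ gives
\[
Y_t=M_T\psi(X_T)+\int_t^T M_s c_s\,\dd s-\int_t^T M_sZ_s\,\dd B_s .
\]
The stochastic integral is a true martingale, because $M$ is bounded and $Z\in\mathcal H^2$. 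Taking $\EE[\cdot\mid\mathcal F_t]$ yields the claimed formula, with $\Phi_{t,s}$ appearing as $M_s=\Phi_{t,s}^\top$.

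\textbf{Main obstacle: the transpose convention.} This is the step I expect to need the most care. With the drift written as $-A^\top Y$, the integrating factor that actually appears is the transpose of the solution of $\dd\Phi/\dd s=A\Phi$. In other words, the cancellation above works for the matrix propagating the adjoint equation. I would handle this by stating explicitly that $\Phi_{t,s}$ is to be read as this transposed propagator, equivalently the fundamental matrix in the convention $\Phi_{t,s}\psi=(M_s)\psi$. I would also check that in the scalar case $d=1$ this reduces to $\exp\big(\int_t^sA(X_\tau,\tau)\,\dd\tau\big)$, which matches the statement.

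\textbf{Closing remark on $Z$.} The final sentence of the theorem follows from Step 1. Once $Y$ is fixed, $\int_0^\cdot Z\,\dd B$ equals $Y_\cdot-Y_0+\int_0^\cdot F(s,Y_s)\,\dd s$, which is determined by $Y$. The It\^o isometry then shows that $Z$ is unique in $\mathcal H^2$.
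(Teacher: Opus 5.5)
The paper gives no proof of this theorem; it is stated at the end of the appendix as a standard BSDE fact and used only to reinterpret the lean adjoint. Your argument is the standard one and it is correct. Step~1 is the usual Pardoux--Peng fixed point for a driver that is $\|A\|_\infty$-Lipschitz in $y$ and independent of $z$. In Step~2, the bracket-free product rule, the bound $\|M_s\|\le e^{\|A\|_\infty(s-t)}$, and the true-martingale property of $\int_t^T M_sZ_s\,\dd B_s$ are justified as you say. Your closing argument for $Z$ is also fine.

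You should state your transpose remark as a correction, not as a reading convention. With $\Phi$ defined by $\frac{\dd}{\dd s}\Phi_{t,s}=A(X_s,s)\Phi_{t,s}$, the statement as written is false for $d\ge 2$, and what you actually prove is
\[
Y_t=\EE\Big[\Phi_{t,T}^\top\psi(X_T)+\int_t^T\Phi_{t,s}^\top c(X_s,s)\,\dd s\;\Big|\;\mathcal{F}_t\Big].
\]
A concrete counterexample: take $A=e_1e_2^\top$, $c=0$, $\psi\equiv e_1$. The BSDE then gives $Y_t=e^{A^\top(T-t)}e_1=e_1+(T-t)e_2$, whereas the stated formula gives $e^{A(T-t)}e_1=e_1$. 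The same transpose is missing where the paper applies the theorem. The correct identity is $\tilde a(t;X^\tau)=\Phi_{t,T}^\top\nabla_x g(X_T^\tau)+\int_t^T\Phi_{t,s}^\top\nabla_x\bar f(X_s^\tau,s)\,\dd s$. Because $p_t^\tau$ and $\tilde a$ pick up the same transpose, the conclusion $\EE[p^\tau_t\mid X^\tau_t]=\EE[\tilde a(t;X^\tau)\mid X^\tau_t]$ still holds.

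Two optional remarks. First, you do not need the contraction argument. Step~2 applies to any solution in $\mathcal{S}^2\times\mathcal{H}^2$, so it already gives uniqueness of $Y$. For existence, use the invertible integrating factor $K_t:=\Phi_{0,t}^\top$ and represent the single martingale $N_t:=\EE[K_T\psi(X_T)+\int_0^TK_sc(X_s,s)\,\dd s\mid\mathcal{F}_t]=N_0+\int_0^t\eta_s\,\dd B_s$. Then set $Y_t:=K_t^{-1}(N_t-\int_0^tK_sc(X_s,s)\,\dd s)$ and $Z_t:=K_t^{-1}\eta_t$. Second, in the paper's usage $X_0\sim P_0$ is random, so the filtration should be $\sigma(X_0)\vee\mathcal{F}^B_t$ rather than the filtration of $B$ alone. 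Martingale representation still holds there, with an $\mathcal{F}_0$-measurable initial term, because $X_0$ is independent of $B$.
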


\end{document}